\documentclass[10pt]{amsart}
\usepackage[utf8]{inputenc}
\usepackage[margin=1.0in]{geometry}
\usepackage{amssymb,amsmath,amsthm}
\usepackage{amsfonts}
\usepackage{amsmath}
\usepackage{xcolor}
\usepackage{amsthm}
\usepackage{pdflscape}
\usepackage{pgfplots}
\usepackage{mathrsfs}
\usepackage{mathbbol}
\usepackage{multirow}
\usepackage[numbers]{natbib}
\usepackage{enumerate}
\usepackage{enumitem}
\usepackage{hyperref}
\usepackage{tikz-cd}
\usepackage{etoolbox}
\usepackage{setspace}
\hypersetup{
pdftitle={On the quantum differentiation of smooth real-valued functions},
pdfsubject={},
pdfauthor={Kolosov Petro},
pdfkeywords={}
}
\usepackage[symbol]{footmisc}
\usepackage{colortbl}
\usepackage{xcolor}
\definecolor{shadegray}{RGB}{128, 128, 128}
\definecolor{lightgray}{RGB}{200,200,200}
\usepackage{amsaddr}
\usepackage{mathtools} 
\newenvironment{brsm}{
  \bigl[ \begin{smallmatrix} }{%
  \end{smallmatrix} \bigr]}

\theoremstyle{definition}
\newtheorem{thm}{Theorem}[section]
\newtheorem{definition}[thm]{Definition}
\newtheorem{example}[thm]{Example}

\newtheorem{cor}[thm]{Corollary}

\newtheorem{lem}[thm]{Lemma}
\newtheorem{quest}[thm]{Question}

\newtheorem{mylem}{Lemma}[subsection]

\numberwithin{equation}{section}

\date{}
\title{Invariants of Unimodular Quadratic Polynomial Poisson Algebras of Dimension 3}
\author{CHENGYUAN MA}
\address{Department of Mathematics, University of Washington}
\email{c9ma@uw.edu}

\setcounter{tocdepth}{3}

\let\oldtocsection=\tocsection
\let\oldtocsubsection=\tocsubsection
\let\oldtocsubsubsection=\tocsubsubsection 
\renewcommand{\tocsection}[2]{\hspace{0em}\oldtocsection{#1}{#2}}
\renewcommand{\tocsubsection}[2]{\hspace{1em}\oldtocsubsection{#1}{#2}}
\renewcommand{\tocsubsubsection}[2]{\hspace{2em}\oldtocsubsubsection{#1}{#2}}
\pgfplotsset{compat=1.18}

\begin{document}
\subjclass[2020]{17B63, 16R30}
\keywords{Poisson alegbra $\cdot$ Invariant subalgebra $\cdot$ Reflection $\cdot$ Rigidity $\cdot$ Homological Determinant}

\begin{abstract}
Let $P = \Bbbk[x_1, x_2, x_3]$ be a unimodular quadratic Poisson algebra and let $G$ be a finite subgroup of the graded Poisson automorphism group of $P$. In this paper, we prove a variant of the Shephard-Todd-Chevalley Theorem for the Poisson algebra $P$ and variants of both the Shephard-Todd-Chevalley Theorem and the Watanabe Theorem for its Poisson enveloping algebra $U(P)$ under the induced action of $G$.
\end{abstract}

\maketitle

\pagestyle{plain}

\setcounter{section}{-1}

\setstretch{1.2}

\addtocontents{toc}{\protect\setcounter{tocdepth}{0}}
\section{Introduction}
\addtocontents{toc}{\protect\setcounter{tocdepth}{1}}

Throughout $\Bbbk$ is an algebraically closed field of characteristic 0. Let $A = \Bbbk[x_1, \cdots, x_n]$ and let $G$ be a finite subgroup of the graded automorphism group of $A$. The invariant subalgebra of $A$ under the action of $G$ is
\[
A^G := \left\{a \in A: \phi(a) = a \textnormal{ for all } \phi \in G \right\}.
\]
It is natural to ask: what properties, in particular, what homological properties does the invariant subalgebra $A^G$ satisfy? Two of the earliest answers are encapsulated in the Shephard-Todd-Chevalley Theorem and the Watanabe Theorem, articulated as follows:
\begin{thm}
(Shephard-Todd-Chevalley Theorem, \cite{ST}, \cite{C}) Let $A = \Bbbk[x_1, \cdots, x_n]$ and let $G$ be a finite subgroup of the graded automorphism group of $A$. Then the invariant subalgebra $A^G$ is regular (or equivalently, $A^G \cong A$ as $\Bbbk$-algebras) if and only if $G$ is generated by (pseudo-)reflections. 
\end{thm}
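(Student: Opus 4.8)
The plan is to normalise the action and then prove the two implications separately; the forward implication ``generated by pseudo-reflections $\Rightarrow$ regular'' (Chevalley) carries essentially all of the difficulty, while the converse (Shephard--Todd) reduces to a bookkeeping argument with Hilbert series once the forward direction is available. \textbf{Setup.} Since each $g \in G$ is graded, it acts linearly on $V := A_1 = \bigoplus_i \Bbbk x_i$, and this linear action recovers $g$; hence $G \hookrightarrow GL(V) \cong GL_n(\Bbbk)$ and $A = \mathrm{Sym}(V)$. Call $g$ a \emph{pseudo-reflection} if $\operatorname{rank}(g - \mathrm{id}) = 1$, i.e.\ $g$ fixes a hyperplane pointwise. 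Write $R := A^G$ with augmentation ideal $R_+$, and recall the standard facts that $A$ is module-finite over $R$ with $\dim R = n$ and $[\operatorname{Frac}(A):\operatorname{Frac}(R)] = |G|$, and that (as $\operatorname{char}\Bbbk = 0$) the Reynolds operator $\pi = \tfrac{1}{|G|}\sum_{g\in G} g$ is an $R$-linear projection $A \twoheadrightarrow R$.

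For the forward direction, assume $G$ is generated by pseudo-reflections. I would isolate two ingredients. First, a \emph{divisibility} fact: if $s$ is a pseudo-reflection fixing the hyperplane $\{\ell_s = 0\}$ for a linear form $\ell_s$, then $f - s(f)$ vanishes on that hyperplane, so $\ell_s \mid f - s(f)$ in $A$ for every $f$; this is the only place the reflection hypothesis enters. Second, a \emph{syzygy lemma}: if $f_1, \dots, f_m \in R$ are homogeneous with $f_1 \notin (f_2, \dots, f_m)R$, and $\sum_i a_i f_i = 0$ with $a_i \in A$ homogeneous, then $a_1 \in A R_+$; I would prove this by induction on $\deg a_1$, applying a generating pseudo-reflection $s$ to the relation, subtracting, and dividing out $\ell_s$ via the divisibility fact. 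To conclude, take a minimal homogeneous generating set $f_1, \dots, f_n$ of $R$ (so $n = \dim_\Bbbk R_+/R_+^2$); it suffices to show the $f_i$ are algebraically independent, since then $R = \Bbbk[f_1,\dots,f_n]$ is a polynomial ring. If a nonzero polynomial relation $P(f_1,\dots,f_n)=0$ existed, choose one with $P$ of minimal degree, differentiate through the chain rule to obtain $\sum_i P_i\,\partial_{x_j} f_i = 0$ in $A$ for each $j$, where $P_i := (\partial P/\partial y_i)(f_1,\dots,f_n) \in R$, apply the syzygy lemma to force $\partial_{x_j} f_1 \in A R_+$, and then combine Euler's identity with the Reynolds operator to deduce $f_1 \in R_+^2$ --- contradicting minimality.

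For the converse, assume $R = A^G = \Bbbk[f_1, \dots, f_n]$ with $\deg f_i = d_i$, and let $H \le G$ be the subgroup generated by all pseudo-reflections of $G$; by the forward direction $A^H = \Bbbk[h_1, \dots, h_n]$ is polynomial with $\deg h_i = e_i$. Expanding Molien's series $H_{A^G}(t) = \tfrac{1}{|G|}\sum_{g} \det(1 - tg)^{-1}$ against its product form $\prod_i (1 - t^{d_i})^{-1}$ as a Laurent series at $t = 1$ yields the two classical identities $\prod_i d_i = |G|$ and $\sum_i (d_i - 1) = r$, where $r$ is the number of pseudo-reflections of $G$; the same applied to $H$ gives $\prod_i e_i = |H|$ and $\sum_i (e_i - 1) = r$, the count $r$ being unchanged because $H$ contains precisely the pseudo-reflections of $G$. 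In particular $\sum_i d_i = \sum_i e_i$. Now $A^G \subseteq A^H$ with $A^H$ module-finite and (being Cohen--Macaulay of full dimension over the regular ring $A^G$) free over $A^G$, so $Q(t) := H_{A^H}(t)/H_{A^G}(t) = \prod_i (1 - t^{d_i}) / \prod_i (1 - t^{e_i})$ is the Hilbert series of the finite-dimensional graded algebra $A^H \otimes_{A^G} \Bbbk$, hence a genuine polynomial with $Q(0) = 1$ and $\deg Q = \sum_i d_i - \sum_i e_i = 0$. Therefore $Q \equiv 1$, giving $[G:H] = Q(1) = 1$, so $G = H$ is generated by pseudo-reflections.

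I expect the main obstacle to be the syzygy lemma together with its deployment in the independence step: propagating the divisibility through the averaging/Reynolds step so as to land \emph{exactly} in $R_+^2$ requires careful degree bookkeeping, whereas the converse direction, once Chevalley's implication and Molien's formula are in hand, is essentially a comparison of Hilbert-series degrees.
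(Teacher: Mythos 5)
The paper does not prove this theorem; it is quoted as classical background with citations to Shephard--Todd and Chevalley, so there is no in-paper argument to compare against. Your proposal is the standard proof: Chevalley's argument (hyperplane divisibility plus the syzygy lemma plus Euler/Reynolds) for the forward direction, and the Molien-series degree count comparing $A^G$ with $A^H$ for the subgroup $H$ generated by the pseudo-reflections of $G$ for the converse. The converse half as you have written it is complete and correct: the two Laurent-expansion identities $\prod_i d_i = |G|$ and $\sum_i(d_i-1)=r$, the freeness of $A^H$ over $A^G$ via Auslander--Buchsbaum, and the degree-zero conclusion $Q\equiv 1$, $[G:H]=Q(1)=1$ all go through.

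There is one genuine gap in the forward direction, in the algebraic-independence step. You apply the syzygy lemma to the relations $\sum_i P_i\,\partial_{x_j} f_i = 0$ to conclude $\partial_{x_j} f_1 \in AR_+$, but the lemma requires the hypothesis $P_1 \notin (P_2,\dots,P_n)R$, and nothing guarantees this for the partial derivatives $P_i = (\partial P/\partial y_i)(f_1,\dots,f_n)$: they may well satisfy nontrivial $R$-linear dependences. The standard repair is to first reorder so that $P_1,\dots,P_k$ is a minimal generating set of the ideal $(P_1,\dots,P_m)R$, write $P_i=\sum_{j\le k} r_{ij}P_j$ for $i>k$, substitute, and apply the syzygy lemma to the resulting relation; the conclusion is then that $\partial_{x_j}f_1 + \sum_{l>k} r_{l1}\,\partial_{x_j}f_l \in AR_+$, and the Euler/Reynolds step must be carried out for this combination, landing $f_1$ in $R_+^2 + \sum_{i\ge 2}Rf_i$ rather than literally in $R_+^2$. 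This is exactly the ``careful degree bookkeeping'' you flag, but as written the lemma's hypothesis is unverified and the step would fail without the reduction. A second, purely presentational point: you cannot take the minimal generating set of $R$ to have exactly $n$ elements at the outset; take $m\ge n$ generators, prove algebraic independence, and conclude $m=n$ from transcendence degree.
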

\begin{thm}
(Watanabe Theorem, \cite{W}) Let $A = \Bbbk[x_1, \cdots, x_n]$ and $G$ be a finite subgroup of the graded automorphism group of $A$ containing no (pseudo-)reflections.Then the invariant subalgebra $A^G$ is Gorenstein if and only if $\det (\phi\big\vert_{A_1})$ = 1 for all $\phi \in G$.
\end{thm}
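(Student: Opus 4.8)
The plan is to translate the Gorenstein condition into a symmetry of the Hilbert series and to detect that symmetry directly from Molien's formula. Two classical inputs drive the argument. First, since $\mathrm{char}\,\Bbbk = 0$ and $G$ is finite, the Hochster--Eagon theorem guarantees that $A^G$ is Cohen--Macaulay; being a subring of the domain $A$, it is moreover a normal graded domain of Krull dimension $n$. Second, by Stanley's criterion a Cohen--Macaulay graded domain $R$ of dimension $d$ is Gorenstein if and only if its Hilbert series satisfies the functional equation $H_R(t^{-1}) = (-1)^d t^{-a} H_R(t)$ for some $a \in \mathbb{Z}$. Thus the whole theorem reduces to deciding, in terms of the determinants $\det(g|_{A_1})$, whether $H_{A^G}$ is symmetric in this sense.

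To examine that symmetry I would invoke Molien's formula
\[ H_{A^G}(t) = \frac{1}{|G|}\sum_{g\in G}\frac{1}{\det(\mathrm{id}-tg)}, \]
where $g$ denotes the action of $g$ on $A_1$ and $\det(\mathrm{id}-tg)=\prod_i(1-t\lambda_i)$ in terms of the eigenvalues $\lambda_i$ of $g$. Substituting $t\mapsto t^{-1}$ and using the elementary identity $\det(\mathrm{id}-t^{-1}g)=t^{-n}\det(t\,\mathrm{id}-g)=(-1)^n t^{-n}\det(g)\det(\mathrm{id}-tg^{-1})$, then reindexing the sum by $g\mapsto g^{-1}$, yields the transformation law
\[ H_{A^G}(t^{-1}) = (-1)^n t^{\,n}\,\frac{1}{|G|}\sum_{g\in G}\frac{\det(g|_{A_1})}{\det(\mathrm{id}-tg)}. \]
This single identity is the engine for both implications. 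For the ``if'' direction, assume $\det(g|_{A_1})=1$ for every $g$; then the right-hand side collapses to $(-1)^n t^{n} H_{A^G}(t)$, which is precisely Stanley's functional equation with $d=n$ and $a=-n$, so $A^G$ is Gorenstein.

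For the converse I would argue by a pole analysis, and this is where I expect the real difficulty and where the no-reflection hypothesis becomes indispensable. Assuming $A^G$ Gorenstein, Stanley supplies $H_{A^G}(t^{-1})=(-1)^n t^{-a}H_{A^G}(t)$; comparing this with the transformation law and clearing $t^a$ produces the rational-function identity $\sum_{g}\frac{1-t^{\,n+a}\det(g|_{A_1})}{\det(\mathrm{id}-tg)}=0$. I would first pin down $a$ through the pole at $t=1$: the order of that pole in the $g$-th summand is $\dim A_1^{\,g}$, so the identity contributes a pole of order $n$ while every other $g$ contributes a pole of order at most $n-2$, because a nonidentity element fixing an $(n-1)$-dimensional subspace would be exactly a pseudo-reflection, which $G$ lacks. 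The numerator $1-t^{n+a}$ of the identity term vanishes to order one at $t=1$, leaving a pole of order $n-1$ that no other term can cancel; this forces $n+a=0$, and the identity reduces to $\sum_g \frac{1-\det(g|_{A_1})}{\det(\mathrm{id}-tg)}=0$.

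Finally I would extract the coefficient of $t^{-n}$ in the Laurent expansion at $t=\infty$. Since $\det(\mathrm{id}-tg)\sim(-t)^n\det(g|_{A_1})$, that coefficient equals $(-1)^n\big(\sum_g \det(g|_{A_1})^{-1}-|G|\big)$, which must vanish, giving $\sum_g \det(g|_{A_1})^{-1}=|G|$. As each $\det(g|_{A_1})$ is a root of unity, this expresses $|G|$ as a sum of $|G|$ unit-modulus complex numbers, which is possible only if every summand equals $1$; hence $\det(g|_{A_1})=1$ for all $g\in G$. The crux of the whole proof is the pole-order bookkeeping at $t=1$: the no-reflection hypothesis is exactly what removes the dangerous order-$(n-1)$ contributions and forces $a=-n$, after which the $t\to\infty$ comparison is a short triangle-inequality rigidity argument.
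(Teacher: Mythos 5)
Your argument is correct. Note first that the paper does not prove this statement at all: it is quoted in the introduction as classical background, with a citation to Watanabe's original papers, so there is no in-paper proof to compare against. What you have written is essentially Stanley's well-known proof of Watanabe's theorem via the Molien series and the functional equation for Hilbert series of graded Gorenstein domains, which is a genuinely different (and slicker) route than Watanabe's original argument; Watanabe instead works with the canonical module, showing $\omega_{A^G}\cong(\omega_A)^G$ is free exactly when $G\subseteq SL_n$ in the reflection-free case. Your route buys a uniform two-directional argument from a single identity, at the price of importing Hochster--Eagon and Stanley's criterion as black boxes. All the key steps check out: the transformation law for $H_{A^G}(t^{-1})$ is correct; the pole order of $\det(\mathrm{id}-tg)^{-1}$ at $t=1$ equals $\dim A_1^{\,g}$ because $g$ has finite order in characteristic $0$ and is therefore diagonalizable (worth saying explicitly, since for a non-semisimple $g$ the pole order would be the algebraic rather than geometric multiplicity of the eigenvalue $1$); the no-reflection hypothesis caps the non-identity pole orders at $n-2$, forcing $a=-n$; and the coefficient of $t^{-n}$ at infinity gives $\sum_g\det(g|_{A_1})^{-1}=|G|$, whence all determinants equal $1$ by the triangle-inequality rigidity you describe. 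One could also observe, consistently with Watanabe's paper, that your ``if'' direction never uses the absence of reflections --- only the converse does.
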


In the following decades, the Shephard-Todd-Chevalley Theorem and the Watanabe Theorem have evolved into pivotal motivations for invariant theory, particularly in non-commutative settings: if $A$ is an Artin-Schelter regular algebra and $G$ is a finite subgroup of the graded automorphism group of $A$, under what conditions on $G$ is the invariant subalgebra $A^G$ Artin-Schelter regular or Artin-Schelter Gorenstein? Artin-Schelter regularity, initially introduced in \cite{AS}, emerges as a non-commutative adaption of the polynomial rings because of its fulfillment of a spectrum of properties inherent to polynomial rings. In a more formal manner:

\begin{definition}
A finitely generated $\Bbbk$-algebra $A$ is called \textit{Artin-Schelter regular} if 
\begin{enumerate}[label = (\arabic*)]
    \item $A$ is connected $\mathbb{N}$-graded: $A$ admits a $\Bbbk$-vector space decomposition $A = \displaystyle{\bigoplus_{n \in \mathbb{N}}A_n}$ such that $A_0 = \Bbbk$ and $A_{i}A_{j} \subseteq A_{i+j}$ for all $i, j \in \mathbb{N}$.

    \item $A$ has finite Gelfand-Kirillov dimension: $\dim_{\Bbbk}A_n$ has polynomial growth. 

    \item $A$ has finite global dimension $d$.

    \item $A$ is Gorenstein: $\text{Ext}_{A}^{i}(\Bbbk, A) \cong \begin{cases}
        0 & i \neq d\\
        \Bbbk(l) & i = d
    \end{cases}$, for some $l \in \mathbb{Z}$. 
\end{enumerate}
\end{definition}

Returning to our discussion of the non-commutative Shephard-Todd-Chevalley question and the non-commutative Watanabe question. There are some established answers, including, but not limited to, universal enveloping algebra of semisimple Lie algebras and Weyl algebras in \cite{AP}, non-PI Sklyanin algebras of global dimension $\geq$ 3 in \cite{KKZ}, skew polynomial rings and quantum matrix algebras in \cite{KKZ2}, down-up algebras in \cite{KKZ3}. Recently, Gaddis, Veerapen, and Wang have proposed an investigation into these questions within the realm of Poisson algebras. Broadly speaking, Poisson algebras are a family of commutative algebras endowed with a non-commutative bracket. In a more rigorous language:
\begin{definition}
A \textit{Poisson algebra} is a commutative $\Bbbk$-algebra $P$ together with a bracket: 
\[
\{-,-\}: P^{\otimes 2} \to P
\]
such that
\begin{enumerate}[label = (\arabic*)]
    \item $(P, \{-,-\})$ is a Lie algebra over $\Bbbk$, namely $\{-,-\}$ satisfies bilinearity, alternativity, anti-commutativity, and the Jacobi identity. 

    \item $\{-,-\}$ satisfies Leibniz rule: $\{a, bc\} = \{a,b\}c + b\{a,c\}$ for all $a, b, c \in P$. 
\end{enumerate}
\end{definition}

Poisson algebras originally emerged in classical mechanics and subsequently assumed a significant role in mathematical physics. In recent decades, Poisson algebras have also garnered attention in pure mathematics. This heightened interest is partially attributed to their approximity with Artin-Schelter regular algebras. One instance of such appromixity is Poisson enveloping algebras.

\begin{definition}
Let $P$ be a Poisson algebra. A (The) \textit{Poisson enveloping algebra} of $P$ is a triple $(U, \alpha, \beta)$:
\begin{itemize}
    \item $U$ is a $\Bbbk$-algebra, 

    \item $\alpha: (P, \cdot) \to U$ is an algebra homomorphism,

    \item $\beta: (P, \{-,-\}) \to U_L$ is a Lie algebra homomorphism,
\end{itemize}
subjecting to the following conditions:
\begin{enumerate}[label=(\arabic*)]
    \item $\alpha(\{a,b\}) = \beta(a)\alpha(b) - \alpha(b)\beta(a)$ for all $a, b \in P$.

    \item $\beta(ab) = \alpha(a)\beta(b) + \alpha(b)\beta(a)$ for all $a, b \in P$.

    \item If $(U', \alpha', \beta')$ is another triple satisfying (1) and (2), then there exists a unique algebra homomorphism $h: U \to U'$ making the following diagram commutative:
    \[
    \begin{tikzcd}
    U \arrow{rrdd}{h}\\
    \\
    P \arrow{rr}[swap]{\alpha', \beta'} \arrow{uu}{\alpha,\beta} && U'
    \end{tikzcd}
    \]
\end{enumerate}
\end{definition}

If $P = \Bbbk[x_1, \cdots, x_n]$ is a quadratic Poisson algebra, then its Poisson enveloping algebra $U(P)$ satisifies a range of preferred properties, including being Artin-Schelter regular. This is one connection between Poisson algebras and Artin-Schelter regular algebras. Another connection is found in the context of semiclassical limits and deformation quantizations. However, this topic will not be discussed in this paper but will be explored in a forthcoming paper.
\[
\begin{tikzcd}
\text{Poisson Algebras }
\arrow[rrrrr, shift={(0,1)}, "\text{deformation quantizations}"] 
\arrow[rrrrr, shift={(0,-1)}, "\text{Poisson Enveloping Algebras}", swap]
& & & & & \text{ Artin-Schelter Regular Algebras}
\arrow[lllll, shift={(0,0.5)}, "\text{semiclassical limits}"]
\end{tikzcd}
\]

Given that Poisson algebras exhibit a pronounced association with Artin-Schelter regular algebras through semiclassical limit, deformation quantizations and Poisson enveloping algebras, there is a strong interrelation between investigations pertaining to the Shephard-Todd-Chevalley question and the Watanabe question in the context of Poisson algebras and investigations of these questions in the context of Artin-Schelter regular algebras. In their study \cite{GVW}, Gaddis, Veerapen, and Wang provided a partial answer to the Shephard-Todd-Chevalley question by investigating multiple Poisson structures arising from the semiclassical limits of specific families of Artin-Schelter regular algebras. Additionally, they offered valuable insights on the Watanabe question for Poisson enveloping algebras under induced actions. Building upon their research, we shall further investigate these questions with regard to a broader range of Poisson structures, with a primary emphasis on quadratic Poisson structures on the polynomial ring of three variables $\Bbbk[x_1, x_2, x_3]$.

\smallskip

In Section 2, we establish a series of lemmas essential for the derivation of the main theorems presented in sections 3 and 4 of this paper.

\smallskip

In section 3, we prove one of the main theorem of this paper, a graded rigidity theorem answering the Shephard-Todd-Chevalley question for unimodular quadratic Poisson structures on $\Bbbk[x_1, x_2, x_3]$.
\begin{thm}
(Theorem 3.1) Let $P = \Bbbk[x_1, x_2, x_3]$ be a unimodular quadratic Poisson algebra and let $G \subseteq \text{PAut}_{\text{gr}}(P)$ be a finite subgroup. Then the invariant subalgebra $P^G$ is isomorphic to $P$ as Poisson algebras if and only if $P$ is trivial. 
\end{thm}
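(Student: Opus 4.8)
The plan is to prove the substantive (``only if'') direction by contradiction and to dispatch the converse by reduction to the classical Shephard--Todd--Chevalley Theorem recalled in the introduction. Assume $P$ carries a nontrivial unimodular quadratic Poisson bracket and that $\psi : P \xrightarrow{\sim} P^G$ is an isomorphism of Poisson algebras; the aim is to force $G$ to be trivial. First I would record that a Poisson isomorphism is in particular an isomorphism of commutative $\Bbbk$-algebras, so $P^G$ is a polynomial ring; by the classical Shephard--Todd--Chevalley Theorem this already forces $G$ to be generated by pseudo-reflections, and I may write $P^G = \Bbbk[f_1, f_2, f_3]$ with homogeneous fundamental invariants of degrees $d_1 \le d_2 \le d_3$, where $\prod_i d_i = |G|$ and not all $d_i$ equal $1$ once $G \ne \{1\}$. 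The goal then becomes showing that the induced Poisson structure on $P^G$ can never match the quadratic one on $P$ unless the bracket is identically zero.

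Next I would use the lemmas of Section~2 to describe $\mathrm{PAut}_{\mathrm{gr}}(P)$ explicitly. In the unimodular dimension-$3$ case the bracket is the Nambu/Jacobian bracket attached to a cubic potential $\Omega \in P_3$, i.e. $\{a,b\} = \mathrm{Jac}(a,b,\Omega)$, and a graded automorphism $\phi \in GL(P_1)$ is Poisson precisely when it preserves $\Omega$ up to the scalar $\det(\phi)$; equivalently $\mathrm{PAut}_{\mathrm{gr}}(P)$ is the conformal symmetry group of the cubic $\Omega$. Two structural consequences are central: the potential $\Omega$ is a Casimir generating the Poisson centre $Z(P) = \Bbbk[\Omega]$, a polynomial ring on one generator of degree $3$; and the bracket is homogeneous in the strong sense $\{P_m, P_n\} \subseteq P_{m+n}$, so that the Euler field satisfies $L_E \pi = 0$. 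The same homogeneity holds on $P^G$ for the inherited grading, giving $\{f_i, f_j\} \in (P^G)_{d_i + d_j}$, and since $\psi$ preserves Poisson centres it carries $Z(P) = \Bbbk[\Omega]$ isomorphically onto $Z(P^G)$.

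The heart of the argument---and the step I expect to be the main obstacle---is to show that the inherited Poisson structure on $P^G$ is \emph{not} polynomially equivalent to a nonzero quadratic structure, so that the existence of $\psi$ forces $d_1 = d_2 = d_3 = 1$, i.e. $G = \{1\}$. Concretely, I would argue that the grading is recoverable from the Poisson data: the Euler field is, up to the symmetries of $\pi$, the unique homogeneity field with $L_E \pi = 0$ whose linearization at the most singular point of $\pi$ (the origin, where a nonzero quadratic $\pi$ vanishes to order $2$) is scalar. Because $\psi$ conjugates $\pi_P$ to $\pi_{P^G}$ and matches their singular points, it must match these homogeneity fields, hence match generating degrees; the degree-$1$ generation of $P$ then forces all $d_i = 1$. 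Running the same comparison through the centre gives the quantitative form: $\psi(\Omega)$ would be a degree-$3$ generator of $Z(P^G)$, whereas the lowest-degree Casimir of the inherited structure has degree determined by the $d_i$, and the two cannot agree once some $d_i > 1$. It is exactly here that nontriviality of the bracket is used: when $\pi = 0$ the homogeneity field is unconstrained and this rigidity collapses.

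Finally, for the converse I would observe that when $P$ is trivial the bracket vanishes identically, so a Poisson isomorphism is the same datum as an algebra isomorphism; the classical Shephard--Todd--Chevalley Theorem then yields $P^G \cong P$ exactly for the (pseudo-)reflection groups, which is consistent with triviality being the dividing line for the Poisson-isomorphism question. The one point to treat carefully in the write-up is the precise role of $G \ne \{1\}$, since $P^{\{1\}} = P$ trivially; I would accordingly phrase the rigidity as the assertion that for a nontrivial bracket no nontrivial finite $G$ produces a Poisson-isomorphic invariant ring.
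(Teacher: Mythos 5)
There is a genuine gap at what you yourself identify as the heart of the argument. Your first step (a Poisson isomorphism $P\cong P^G$ is in particular an algebra isomorphism, so the classical Shephard--Todd--Chevalley Theorem forces $G$ to be generated by reflections lying in $\mathrm{PAut}_{\mathrm{gr}}(P)$, i.e.\ by Poisson reflections) is exactly the paper's first step. But the rigidity mechanism you propose after that does not hold for most of the nine unimodular classes. You assert that $Z(P)=\Bbbk[\Omega]$ and that the Euler field is the unique homogeneity vector field singled out by the vanishing locus of $\pi$ at the origin; both claims fail for the degenerate potentials in the Dufour--Haraki--Bao--Wang list. For example, for $\Omega=x_1^3$ the bracket is $\{x_2,x_3\}=3x_1^2$ with all other brackets zero, so $x_1\in Z(P)$ and $Z(P)\supseteq\Bbbk[x_1]\neq\Bbbk[x_1^3]$; moreover $\pi$ vanishes to order $2$ along the entire plane $x_1=0$, so there is no distinguished ``most singular point'' and no uniqueness statement for the homogeneity field. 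Since a Poisson isomorphism $P\to P^G$ need not be graded, and you have no valid mechanism forcing it to respect the gradings, the conclusion $d_1=d_2=d_3=1$ is not established. A uniform ``degree-rigidity'' argument of the kind you sketch would have to be proved separately for each degenerate structure, at which point you are back to a case analysis.

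That case analysis is precisely what the paper does, and it avoids the gradedness issue entirely by using invariants that are preserved under \emph{arbitrary} Poisson isomorphisms. For each of the nine potentials it computes $\mathrm{PAut}_{\mathrm{gr}}(P)$ and $\mathrm{PR}(P)$ from equation (2.1); four classes have no Poisson reflections at all, which already forces $G$ to be trivial there. For the remaining five it determines every finite Poisson reflection group $G$, computes $P^G$ and its induced bracket explicitly (via Molien's Theorem plus an explicit set of invariants), and then distinguishes $P^G$ from $P$ either by unimodularity of the modular derivation (Lemma 2.1, which is generator-independent and hence a Poisson-isomorphism invariant) or by $\dim_{\Bbbk}P/(\{P,P\})$ (Lemma 2.2). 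If you want to salvage your approach, you would need to replace the Euler-field/centre argument with invariants of this kind; as written, the crucial step is unsupported and false in the degenerate cases.
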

For Poisson algebras lacking Poisson reflections, Theorem 3.1 is an immediate consequence of the classical Shephard-Todd-Chevalley Theorem. For the remaining unimodular quadratic Poisson structures on $\Bbbk[x_1, x_2, x_3]$, we provide a case-by-case proof in subsection 3.1 - subsection 3.9. This result offers additional instances of graded rigid Poisson algebras beyond those examined in \cite{GVW}. As it transpires, most invariant subalgebras of unimodular quadratic Poisson structures on $\Bbbk[x_1, x_2, x_3]$ under Poisson reflection groups fail to preserve unimodularity. There exists two invariant subalgebras that retain unimodularity; nonetheless, they are not isomorphic to the original Poisson algebra. Up to this point, all examples of Poisson algebras considered are graded rigid.

\smallskip

In section 4, we shift our attention to the study of the invariants of Poisson enveloping algebras. Let $P = \Bbbk[x_1,\cdots,x_n]$ be a quadratic Poisson algebra and let $U(P)$ be its Poisson enveloping algebra. A graded Poisson automorphism $\phi$ of $P$ induces a graded algebra automorphism $\widetilde{\phi}$ of $U(P)$:
\begin{align*}
    \widetilde{\phi}(x_i) = \phi(x_i), \quad \widetilde{\phi}(y_i) = \sum_{j=1}^{n}\frac{\partial g(x_i)}{\partial x_j}y_j,
\end{align*}
for all $1 \leq i \leq n$. In addition, a subgroup $G$ of the graded Poisson automorphism group of $P$ induces a corresponding subgroup $\widetilde{G}$ of the graded automorphism group of $U(P)$ by setting $\widetilde{G} = \{\widetilde{\phi}: \phi \in G\}$. In Lemma 2.3 and Lemma 2.4, we explain the rationale for the consideration of such induced actions. Subsequently, in section 4, we will prove the remaining two main theorems of this paper under these induction actions, an answer to the Shephard-Todd- Chevalley question and a formula for computing the homological determinant of an induced automorphism for Poisson enveloping algebras of quadratic Poisson structures on $\Bbbk[x_1, \cdots, x_n]$.

\begin{thm}
(Theorem 4.1) Let $P = \Bbbk[x_1, \cdots, x_n]$ be a quadratic Poisson algebra and $U(P)$ be its Poisson enveloping algebra. Let $G$ be a finite nontrivial subgroup of the graded Poisson automorphism group of $P$ and let $\widetilde{G}$ be the corresponding finite nontrivial subgroup of the graded automorphism group of $U(P)$. The invariant subalgebra $U(P)^{\widetilde{G}}$ is Artin-Schelter regular if and only if $G$ is trivial.
\end{thm}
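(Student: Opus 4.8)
The plan is to reduce Theorem 4.1 to the statement that the induced group $\widetilde{G}$ contains no nontrivial quasi-reflections of $U(P)$, and then to extract this from a parity obstruction built into the doubled eigenvalue structure of $\widetilde{\phi}$. Since $P$ is quadratic, $U(P)$ is noetherian Artin-Schelter regular of global dimension $2n$ (as recalled in the introduction), so the one-directional non-commutative Shephard-Todd-Chevalley implication of Kirkman-Kuzmanovich-Zhang applies: if $U(P)^{\widetilde{G}}$ has finite global dimension—in particular if it is Artin-Schelter regular—then $\widetilde{G}$ is generated by quasi-reflections. The reverse implication of the theorem is immediate, since when $G$ is trivial we have $U(P)^{\widetilde{G}} = U(P)$, which is Artin-Schelter regular; thus the whole content lies in the forward direction, and it suffices to prove that the only quasi-reflection in $\widetilde{G}$ is the identity.

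First I would pin down the action of $\widetilde{\phi}$ on the degree-one piece of $U(P)$. Because $\phi$ is a \emph{graded} Poisson automorphism, $\phi|_{P_1}$ is linear with some matrix $M$, so the Jacobian entries $\partial \phi(x_i)/\partial x_j$ are just the constants $M_{ij}$ and the induced formula collapses to $\widetilde{\phi}(y_i) = \sum_j M_{ij} y_j$. Hence on $U(P)_1 = \operatorname{span}(x_1,\dots,x_n,y_1,\dots,y_n)$ the map $\widetilde{\phi}$ acts by the block-diagonal matrix $\operatorname{diag}(M,M)$. As $G$ is finite and $\Bbbk$ has characteristic $0$, every such $M$ is diagonalizable; writing $\lambda_1,\dots,\lambda_n$ for its eigenvalues, the eigenvalues of $\widetilde{\phi}|_{U(P)_1}$ are exactly $\lambda_1,\dots,\lambda_n,\lambda_1,\dots,\lambda_n$, so each occurs with \emph{even} multiplicity.

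Next I would compute the graded trace series. After passing to an eigenbasis of $M$ and using the PBW basis of $U(P)$ (ordered monomials in the $x_i$ and $y_i$, which remains a basis after an invertible linear change of generators), each PBW monomial is an eigenvector of $\widetilde{\phi}$ with eigenvalue the product of the relevant $\lambda_i$, and the trace factors as
\[
\operatorname{Tr}(\widetilde{\phi}, t) \;=\; \prod_{i=1}^{n} \frac{1}{(1-\lambda_i t)^2}.
\]
The order of the pole of this rational function at $t=1$ is therefore $2\,\#\{\,i : \lambda_i = 1\,\}$, which is always even. A graded automorphism $g$ of an Artin-Schelter regular algebra of global dimension $d$ is a quasi-reflection precisely when the pole of $\operatorname{Tr}(g,t)$ at $t=1$ has order $d-1$; here $d-1 = 2n-1$ is odd, so no nontrivial $\widetilde{\phi}$ can be a quasi-reflection. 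Consequently $\widetilde{G}$ is generated by quasi-reflections only if $\widetilde{G}$, and hence $G$, is trivial, completing the forward direction.

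The main obstacle is ensuring that the one-directional Shephard-Todd-Chevalley implication is genuinely available for $U(P)$: this requires verifying that $U(P)$ is noetherian and meets the hypotheses under which ``finite global dimension of the invariant forces generation by quasi-reflections'' has been established, which I would confirm from the known structure theory of Poisson enveloping algebras of quadratic Poisson algebras. A secondary technical point is the justification of the product formula for $\operatorname{Tr}(\widetilde{\phi},t)$, which depends on having a PBW basis of $U(P)$ adapted to the block-diagonal eigenbasis; this is routine once the diagonalizability of $M$ is in hand, but it must be recorded carefully, since the PBW monomials, while eigenvectors, are governed by the enveloping-algebra relations rather than by a naive commutative bigrading.
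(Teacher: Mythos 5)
Your proposal is correct and rests on the same central observation as the paper: the induced automorphism $\widetilde{\phi}$ acts on $U(P)_1$ by $\operatorname{diag}(M,M)$, so every eigenvalue occurs with even multiplicity, and this parity obstruction rules out quasi-reflections; one then concludes via the Kirkman--Kuzmanovich--Zhang result that a reflection-free group yields an invariant subalgebra of infinite global dimension. The one place you diverge is in \emph{how} you exclude quasi-reflections: the paper compares the doubled eigenvalue multiset directly against the classification of quasi-reflections of quantum polynomial rings in \cite[Theorem 3.1]{KKZ} (a classical reflection would force the eigenvalue $1$ to have odd multiplicity $2n-1$, and a mystic reflection would force $i$ and $-i$ to each have multiplicity $1$, both impossible), whereas you work straight from the trace-series definition of a quasi-reflection, computing $\operatorname{Tr}(\widetilde{\phi},t)=\prod_i(1-\lambda_i t)^{-2}$ and noting that its pole order at $t=1$ is even while a quasi-reflection requires the odd order $2n-1$. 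Your route avoids invoking the classification theorem but instead requires the full product formula for the trace series, which the paper only establishes later (Lemma 4.3, in the service of the homological determinant computation) and which carries the PBW-basis bookkeeping you rightly flag; the paper's route needs only the degree-one eigenvalue count of Lemma 2.5. Both are sound, and the trade-off is essentially classification input versus trace-series input.
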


\begin{thm}
(Theorem 4.4) Let $P = \Bbbk[x_1, \cdots, x_n]$ be a quadratic Poisson algebra and let $U(P)$ be its Poisson enveloping algebra. Let $\phi$ be a finite-order graded Poisson automorphism of $P$ and let $\widetilde{\phi}$ be the induced graded automorphism of $U(P)$. Then
\[
\text{hdet} \widetilde{\phi} = (\det \phi\big\vert_{P_1})^2.
\]
\end{thm}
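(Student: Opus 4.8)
The plan is to prove Theorem 4.4 by reducing the computation of the homological determinant to an explicit description of how $\widetilde{\phi}$ acts on a top-degree component associated with the Gorenstein structure of $U(P)$. Since $P = \Bbbk[x_1,\dots,x_n]$ is a quadratic Poisson algebra, its enveloping algebra $U(P)$ is Artin-Schelter regular of global dimension $2n$ (it is, up to the Poisson relations, built from the $2n$ generators $x_1,\dots,x_n,y_1,\dots,y_n$). I would first recall that for a connected graded Artin-Schelter regular algebra the homological determinant of a graded automorphism $\widetilde{\phi}$ can be read off from its action on the one-dimensional space $\operatorname{Ext}^{2n}_{U(P)}(\Bbbk,U(P))$, equivalently from the scalar by which $\widetilde{\phi}$ acts on the top nonzero component of the trivial-module Koszul (or minimal free) resolution.

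The key computational step is to identify this top cohomology as the top exterior-type power of the degree-one generating space and then track the determinant of $\widetilde{\phi}$ on it. The $2n$ generators split into the $x$-generators, on which $\widetilde{\phi}$ acts by $\phi\big\vert_{P_1}$, and the $y$-generators, on which $\widetilde{\phi}$ acts by the Jacobian matrix $\left(\tfrac{\partial \phi(x_i)}{\partial x_j}\right)$ according to the induction formula
\[
\widetilde{\phi}(y_i) = \sum_{j=1}^{n}\frac{\partial \phi(x_i)}{\partial x_j}\,y_j.
\]
Because $\phi$ is a graded Poisson automorphism, $\phi\big\vert_{P_1}$ is a linear map with matrix $M = \left(\tfrac{\partial \phi(x_i)}{\partial x_j}\right)$, so the action of $\widetilde{\phi}$ on the $y$-generators is governed by the same matrix $M$ (up to transpose), and in particular $\det$ of the $y$-block equals $\det(\phi\big\vert_{P_1})$ as well. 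Thus on the full degree-one space $U(P)_1 = \Bbbk\{x_i\} \oplus \Bbbk\{y_i\}$ the determinant is $(\det \phi\big\vert_{P_1})^2$. The main work is to show that the homological determinant coincides with the determinant of $\widetilde{\phi}$ on the top component of the Koszul dual, and that this top component is the full top exterior power of $U(P)_1$ so that the two blocks multiply.

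Concretely, I would invoke the standard fact (for Koszul or $N$-Koszul AS-regular algebras, as $U(P)$ is) that $\operatorname{hdet}\widetilde{\phi}$ equals the scalar action of $\widetilde{\phi}$ on the last term of the Koszul resolution of the trivial module, which for a Koszul AS-regular algebra of global dimension $d$ is a twist of $U(P)$ by the one-dimensional module $\bigwedge^{d} U(P)_1$. Since $d = 2n$ here and $U(P)_1$ is $2n$-dimensional, $\bigwedge^{2n}U(P)_1$ is the full determinant line, on which $\widetilde{\phi}$ acts by $\det\big(\widetilde{\phi}\big\vert_{U(P)_1}\big)$. This determinant factors along the block-triangular (in fact block-diagonal on degree one) decomposition into the $x$-block and $y$-block, each contributing $\det(\phi\big\vert_{P_1})$, giving the claimed $(\det\phi\big\vert_{P_1})^2$.

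The hard part will be justifying rigorously that the homological determinant is computed by the determinant of $\widetilde{\phi}$ on $\bigwedge^{2n}U(P)_1$, i.e.\ pinning down the precise form of the Gorenstein/Koszul-dual top term for $U(P)$ and checking that $\widetilde{\phi}$ preserves its grading and the relevant graded pieces. This requires knowing (or establishing, presumably via the structure results for enveloping algebras of quadratic Poisson algebras used earlier) that $U(P)$ is Koszul AS-regular of global dimension $2n$ with one-dimensional top Ext group, and that the induced automorphism is genuinely graded so the $x$- and $y$-generators sit in the same degree and contribute multiplicatively. Once the identification $\operatorname{hdet}\widetilde{\phi} = \det\big(\widetilde{\phi}\big\vert_{U(P)_1}\big)$ is in place, the factorization into the two determinant blocks is routine linear algebra.
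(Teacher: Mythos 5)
Your final formula is right, and your observation that $\widetilde{\phi}$ acts on $U(P)_1 = \bigoplus\Bbbk x_i \oplus \bigoplus \Bbbk y_i$ by two blocks each with determinant $\det\phi\big\vert_{P_1}$ is correct (this is essentially Lemma 2.5 of the paper). But the step you yourself flag as ``the hard part'' is a genuine gap, and the route you propose to fill it does not work. For a noncommutative Koszul Artin--Schelter regular algebra $A$ of global dimension $d$, the top component $A^!_d \cong \operatorname{Ext}^d_A(\Bbbk,\Bbbk)$ is a one-dimensional subspace of $(A_1^*)^{\otimes d}$ cut out by the quadratic relations; it is \emph{not} the full exterior power $\bigwedge^d A_1^*$, and a graded automorphism acts on it by a ``quantum determinant'' that in general differs from $\det(\sigma\big\vert_{A_1})$. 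A concrete counterexample to the principle $\operatorname{hdet}\sigma = \det(\sigma\big\vert_{A_1})$: take $A = \Bbbk_{-1}[x,y]$ and $\sigma: x\mapsto y,\ y\mapsto x$. Then $\det(\sigma\big\vert_{A_1}) = -1$, but $\operatorname{Tr}_A(\sigma,t) = 1/(1+t^2) = t^{-2} - t^{-4}+\cdots$, so $\operatorname{hdet}\sigma = 1$. Since $U(P)$ is exactly such a quantum polynomial ring, you cannot invoke $\operatorname{hdet}\widetilde{\phi} = \det\bigl(\widetilde{\phi}\big\vert_{U(P)_1}\bigr)$ as a known fact; for $U(P)$ that identity is (given your block computation) equivalent to the theorem being proved, so assuming it is circular. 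To make your approach work you would have to exhibit the one-dimensional space $U(P)^!_{2n}$ explicitly and compute the scalar action of the induced automorphism on it, which you have not done.

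The paper avoids this entirely: it first proves (Lemma 4.3, using the PBW basis of $U(P)$) that $\operatorname{Tr}_{U(P)}(\widetilde{\phi},t) = \prod_i (1-\lambda_i t)^{-2c_i}$ where the $\lambda_i$ are the eigenvalues of $\phi\big\vert_{P_1}$ with multiplicities $c_i$, and then applies the Jing--Zhang formula \cite[Lemma 2.6]{JZ}, which says $\operatorname{Tr}_{U(P)}(\widetilde{\phi},t) = (\operatorname{hdet}\widetilde{\phi})^{-1}t^{-2n} + \text{lower terms}$ as a Laurent series in $t^{-1}$. Comparing leading coefficients gives $\operatorname{hdet}\widetilde{\phi} = \bigl(\prod_i\lambda_i^{c_i}\bigr)^2 = (\det\phi\big\vert_{P_1})^2$. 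If you want to salvage your outline, replace the $\bigwedge^{2n}U(P)_1$ identification with this trace-series computation.
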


\smallskip

The foundation for both Theorem 4.1 and Theorem 4.4 lies in the observations made in Lemma 2.5: if a graded Poisson automorphism $\phi$ of a quadratic Poisson structure on $\Bbbk[x_1, \cdots, x_n]$ has eigenvalues $\lambda_1, \cdots, \lambda_m$, with multiplicity $c_1, \cdots, c_m$, then the induced automorphism $\widetilde{\phi}$ of the Poisson enveloping algebra has $\lambda_1, \cdots, \lambda_m$ with multiplicity $2c_1, \cdots, 2c_m$. It is noteworthy that this observation provides a generalization of \cite[Theorem 5.6]{GVW}. For Theorem 4.1, we compare the eigenvalues of $\widetilde{\phi}$ with the eigenvalues of quasi-reflections of $U(P)$ described in \cite[Theorem 3.1]{KKZ} and conclude that $\widetilde{\phi}$ cannot be a quasi-reflection. For Theorem 4.4, we derive a formula for computing the trace series of $\widetilde{\phi}$ and relate the trace series to the homological determinant of $\widetilde{\phi}$ as in \cite[Lemma 2.6]{JZ}.

\smallskip

Before delving further into the results in this paper, the author would like to thank his advisor, James Zhang, for his invaluable advice and guidance throughout this project and Xingting Wang for his reviews and remarks on this paper.

\

\section{Fundamentals of Poisson Invariant Theory}

Let $P = \Bbbk[x_1,x_2,x_3]$ be a Poisson algebra under the standard grading. The Poisson algebra $P$ is called \textit{quadratic} if $\{P_1, P_1\} \subseteq P_2$. Dufour and Haraki have classified all quadratic Poisson structures on $P$ into 13+1 classes \cite[Theorem 2]{DH}. In this paper, we are primarily interested in the +1 class: a class consisting of Poisson structures of the following form:
\[
\{x_1,x_2\} = \frac{\partial \Omega}{\partial x_3}, \quad
\{x_2,x_3\} = \frac{\partial \Omega}{\partial x_1}, \quad 
\{x_3,x_1\} = \frac{\partial \Omega}{\partial x_2},
\]
for some homogeneous polynomials $\Omega$ of $x_1, x_2, x_3$ of degree 3. Such Poisson structures are called \textit{Jacobian}. In the literature, such Poisson structures are alternatively referred to as \textit{unimodular}, as in \cite[Proposition 2.6]{LWW}. The homogeneous polynomial $\Omega$ were classified into 9 subclasses up to some scalar in \cite{BW}:
\begin{align*}
&(1)\hspace{.1cm}x_1^3, \quad (2)\hspace{.1cm}x_1^2x_2, \quad (3)\hspace{.1cm}2x_1x_2x_3, \quad (4)\hspace{.1cm}x_1^2x_2 + x_1x_2, \quad (5)\hspace{.1cm}x_1^3 + x_1^2x_3, \quad (6)\hspace{.1cm}x_1^3 + x_1^2x_3 + x_2x_3,\\
&(7)\hspace{.1cm}\frac{1}{3}(x_1^3 + x_2^3 + x_3^3) + \lambda x_1x_2x_3 \hspace{.1cm}(\lambda^3 \neq -1),\quad (8)\hspace{.1cm}x_1^3 + x_1^2x_2 + x_1x_2x_3, \quad (9)\hspace{.1cm}x_1^2x_3 + x_1^2x_2.
\end{align*}

In the remaining part of this section, we will review the necessary concepts and tools required for the study of the invariants of these Poisson structures.

\smallskip

Let $P, Q$ be Poisson algebras. A map $\phi: P \to Q$ is called a \textit{Poisson homomorphism} if $\phi$ is an algebra homomorphism and a Lie algebra homomorphism. 

\smallskip 

Let $P = \Bbbk[x_1, \cdots, x_n]$ be a Poisson algebra under the standard grading. A \textit{graded Poisson automorphism} of $P$ is a bijective Poisson homomorphism $\phi: P \to P$ such that $\phi(P_i) = P_i$ for all $i \geq 0$. The graded Poisson automorphism group of $P$ will be denoted as $\text{PAut}_{\text{gr}}(P)$. A \textit{Poisson reflection} of $P$ is a finite-order graded Poisson automorphism $\phi: P \to P$ such that $\phi\big\vert_{P_1}$ has the following eigenvalues: $\underbrace{1,\cdots,1}_{n-1}, \xi$, for some primitive $m$th root of unity $\xi$. The set consisting of all Poisson reflections of $P$ will be denoted as $\text{PR}(P)$.

\smallskip

Let $P = \Bbbk[x_1, \cdots, x_n]$ be a Poisson algebra. Its Poisson enveloping algebra $U(P)$ is necessarily unique with respect to the universal property described in the introduction and can be described by an explicit set of generators and relations as follows:
\begin{thm}
\cite{OPS} Let $P = \Bbbk[x_1, \cdots, x_n]$ be a Poisson algebra. Then $U(P)$ is the free $\Bbbk$-algebra generated by $x_1, \cdots, x_n, y_1, \cdots, y_n$, subject to the following relations:
\end{thm}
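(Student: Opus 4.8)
The plan is to prove that the algebra $U$ defined by the displayed presentation is the Poisson enveloping algebra by exhibiting on $U$ a triple $(U,\alpha,\beta)$ satisfying conditions (1) and (2) of the universal property, and then checking that this triple is universal. Writing $x_i,y_i$ for the images of the free generators, the relations I expect are
\[
x_ix_j - x_jx_i = 0, \qquad
y_ix_j - x_jy_i = \{x_i,x_j\}, \qquad
y_iy_j - y_jy_i = \sum_{k=1}^{n}\frac{\partial\{x_i,x_j\}}{\partial x_k}\,y_k,
\]
where $\{x_i,x_j\}$ and its partials are read as polynomials in the $x_k$ (placed to the left of the $y$'s). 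Relation two is forced by condition (1) applied to $a=x_i,b=x_j$, and relation three by the fact that $\beta$ must be a Lie homomorphism sending $\{x_i,x_j\}$ to $\beta(\{x_i,x_j\})$, which condition (2) evaluates to the stated derivation formula.

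First I would define $\alpha\colon P\to U$ on generators by $\alpha(x_i)=x_i$ and extend multiplicatively; the first relation family guarantees $\alpha$ is a well-defined homomorphism out of the commutative ring $P$. Next I would define $\beta\colon P\to U_L$ by $\beta(x_i)=y_i$ and extend by $\beta(f)=\sum_k\alpha(\partial f/\partial x_k)\,y_k$, which is exactly what condition (2) dictates. The substance of the verification is then to show $\beta$ is a Lie homomorphism, i.e. $[\beta(f),\beta(g)]=\beta(\{f,g\})$ for all $f,g\in P$, together with condition (1), $\alpha(\{f,g\})=[\beta(f),\alpha(g)]$. Both reduce, by the Leibniz rule and induction on total degree, to the generator-level relations, which hold by construction; here the Jacobi identity of the Poisson bracket is precisely what makes the $[y_i,y_j]$-relations mutually compatible.

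For the universal property, given any triple $(U',\alpha',\beta')$ satisfying (1) and (2), I would define $h\colon U\to U'$ on generators by $h(x_i)=\alpha'(x_i)$ and $h(y_i)=\beta'(x_i)$. Conditions (1) and (2) for $U'$ say exactly that these images satisfy the three defining relations of $U$, so $h$ descends to an algebra homomorphism; commutativity of the required diagram is immediate on generators, hence everywhere, and uniqueness follows because $x_i,y_i$ generate $U$.

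The main obstacle is not any single identity but ensuring the presentation is \emph{consistent}: that $U$ is not accidentally collapsed by hidden consequences of the relations, so that $\alpha$ is injective and $P$ genuinely embeds. I would settle this with Bergman's Diamond Lemma, ordering monomials so that the three relation families become rewriting rules reducing every word to the ordered form $x_1^{a_1}\cdots x_n^{a_n}y_1^{b_1}\cdots y_n^{b_n}$, and checking that all overlap ambiguities resolve. The delicate cases are the $yyx$- and $yyy$-overlaps, whose resolution is governed, respectively, by the Leibniz rule and the Jacobi identity for $\{-,-\}$; confirming they are resolvable both establishes the PBW-type $\Bbbk$-basis and re-derives the compatibility used above.
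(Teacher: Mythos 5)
The paper offers no proof of this statement---it is quoted verbatim from \cite{OPS}---and your outline reconstructs essentially the argument given in that source: verify conditions (1) and (2) for the explicit $\alpha$ and $\beta$ by Leibniz-rule induction from the generator-level relations, check the universal property on generators (where conditions (1) and (2) for $(U',\alpha',\beta')$ are exactly what make $h(x_i)=\alpha'(x_i)$, $h(y_i)=\beta'(x_i)$ respect the three relation families), and resolve the $yyx$- and $yyy$-overlaps via the Diamond Lemma, which is precisely how the PBW basis of \cite[Theorem 3.7]{OPS} is obtained. Your plan is correct; the one remark worth making is that the paper's definition of the Poisson enveloping algebra does not require $\alpha$ to be injective, so the Diamond Lemma step is needed only for the separate PBW/non-collapse claim and not for identifying the presented algebra with $U(P)$, which follows from the universal-property verification alone.
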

\begin{enumerate}
    \item $[x_i, x_j] = 0$,
    
    \item $[y_i, y_j] = \displaystyle{\sum_{k=1}^{n}\frac{\partial \{x_i,x_j\}}{\partial x_k}y_k}$,

    \item $[y_i, x_j] = \{x_i, x_j\}$,
\end{enumerate}
for all $1 \leq i, j \leq n$, and $\alpha, \beta$ are defined as the follows:
\begin{alignat*}{3}
    \alpha: \hspace{.1cm} &P \to U(P), \quad \beta: P \to U(P)\\
    &f \hspace{.05cm} \mapsto f, \hspace{1.57cm} f \mapsto \sum_{k=1}^{n}\frac{\partial f}{\partial x_k}y_k.
\end{alignat*}

The salience of Poisson enveloping algebras within the research of invariant theory of Poisson algebras lies in its role as one of the two intermediary links connecting Poisson algebras to Artin-Schelter regular algebras. Let $P = \Bbbk[x_1, \cdots, x_n]$ be a quadratic Poisson algebra. Then $U(P)$ is Artin-Schelter regular \cite[Corollary 1.5]{LWZ2} and satisfies a range of preferred qualities:
\begin{enumerate}
    \item $U(P)$ is Noetherian \cite[Proposition 9]{O}.

    \item $U(P)$ admits a Poincaré-Birkhoff-Witt basis $\{x_1^{i_1}\cdots x_n^{i_n}y_1^{j_1} \cdots y_n^{j_n}: i_r, j_s \geq 0\}$ \cite[Theorem 3.7]{OPS}. 
    
    \item $U(P)$ has global dimension $2n$ \cite[Proposition 2.1]{BZ}.
    
    \item The Hilbert series $h_{U(P)}(t) = \displaystyle{\frac{1}{(1-t)^{2n}}}$ \cite[Lemma 5.4]{GVW}.
\end{enumerate}

Because $U(P)$ is an Artin-Schelter regular algebra, we can extend the commutative concepts of ``reflections" and ``determinant" to it.

\smallskip

Let $A$ be a connected $\mathbb{N}$-graded, locally finite $\Bbbk$-algebra and let $\phi$ be a graded automorphism of $A$. The \textit{trace series} of $\phi$ is $\text{Tr}_{A}(\phi,t) = \displaystyle{\sum_{i = 0}^{\infty}\text{tr}(\phi\big\vert_{A_i})t^i}$. In particular, if $\phi = \text{id}_{A}$, we recover the Hilbert series: $\text{Tr}_{A}(\text{id}_{A},t) = h_{A}(t)$.

\smallskip

A practical application of the trace series is the computation of the Hilbert series of invariant subalgebras:
\begin{thm}
(Molien's Theorem) Let $A$ be a connected $\mathbb{N}$-graded, locally finite $\Bbbk$-algebra and let $G \subseteq \text{Aut}_{\text{gr}}(A)$ be a finite subgroup. Then
\[
h_{A^G}(t) = \frac{1}{|G|}\sum_{\phi \in G}\text{Tr}_{A}(\phi,t).
\]
\end{thm}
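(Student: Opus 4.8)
The plan is to build the Reynolds averaging operator and exploit the elementary fact that the trace of an idempotent endomorphism of a finite-dimensional vector space equals its rank. Since $G$ is finite and $\text{char}\,\Bbbk = 0$, the integer $|G|$ is invertible in $\Bbbk$, so I would define the $\Bbbk$-linear map
\[
\pi = \frac{1}{|G|}\sum_{\phi \in G}\phi : A \to A.
\]
Because every $\phi \in G$ is a graded automorphism, each $\phi$ preserves the homogeneous components $A_i$, and hence so does $\pi$; it therefore restricts to $\pi_i = \pi\big\vert_{A_i} : A_i \to A_i$. Local finiteness guarantees that each $A_i$ is finite-dimensional, which is what makes the trace arguments below legitimate componentwise.

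First I would verify that $\pi$ is a projection onto the invariant subalgebra $A^G$. For any fixed $\psi \in G$, re-indexing the sum by $\phi' = \psi\phi$ (which ranges again over all of $G$) gives $\psi \circ \pi = \pi$, whence $\pi^2 = \frac{1}{|G|}\sum_{\psi \in G}(\psi \circ \pi) = \pi$. Moreover, if $a \in \text{im}\,\pi$ then $\psi(a) = a$ for every $\psi \in G$, so $\text{im}\,\pi \subseteq A^G$; conversely $\pi(a) = a$ for every $a \in A^G$. Thus $\pi$ is idempotent with image exactly $A^G$, and restricting to degree $i$, $\pi_i$ is idempotent with image $(A^G)_i$.

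Next I would invoke the key linear-algebra fact: an idempotent endomorphism of a finite-dimensional $\Bbbk$-vector space is diagonalizable with eigenvalues in $\{0,1\}$, so its trace equals the dimension of its image. Applied to $\pi_i$ together with linearity of the trace, this gives
\[
\dim_\Bbbk (A^G)_i = \text{tr}(\pi_i) = \frac{1}{|G|}\sum_{\phi \in G}\text{tr}\big(\phi\big\vert_{A_i}\big).
\]
Multiplying by $t^i$, summing over $i \geq 0$, and interchanging the outer finite sum over $G$ with the formal power series in $t$ then yields
\[
h_{A^G}(t) = \sum_{i=0}^{\infty}\dim_\Bbbk (A^G)_i\, t^i = \frac{1}{|G|}\sum_{\phi \in G}\sum_{i=0}^{\infty}\text{tr}\big(\phi\big\vert_{A_i}\big)t^i = \frac{1}{|G|}\sum_{\phi \in G}\text{Tr}_{A}(\phi,t),
\]
which is the claimed identity. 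There is no serious obstacle in this argument; the only points demanding care are that $|G|$ be invertible (supplied by $\text{char}\,\Bbbk = 0$), so that $\pi$ is defined, and that each $A_i$ be finite-dimensional (supplied by local finiteness), so that the principle ``trace of an idempotent equals rank'' can be applied in each degree.
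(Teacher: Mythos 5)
Your proof is correct: the averaging (Reynolds) operator argument, with $\pi^2=\pi$, $\operatorname{im}\pi = A^G$, and ``trace of an idempotent equals rank'' applied degree by degree, is the standard and complete proof of Molien's Theorem, and you correctly isolate the two hypotheses that make it work (characteristic $0$ for invertibility of $|G|$, local finiteness for the componentwise trace). Note that the paper itself states this theorem without proof, as classical background, so there is no in-paper argument to compare against; your writeup fills that gap with exactly the expected argument.
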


\smallskip

Let $A$ be an Artin-Schelter regular algebra with Hilbert series $h_{A}(t) = \displaystyle{\frac{1}{(1-t)^{n}f(t)}}$ for some $f(t)$ satisfying $f(1) \neq 0$. A \textit{quasi-reflection} of $A$ is a finite-order graded automorphism $\phi: A \to A$ such that $\text{Tr}_{A}(\phi,t) = \displaystyle{\frac{1}{(1-t)^{n-1}g(t)}}$ for some $g(t)$ satisfying $g(1) \neq 0$. In particular, if $A$ is a commutative polynomial ring, we recover the notion of a classical reflection: $\phi\big\vert_{A_1}$ has eigenvalues $\underbrace{1, \cdots, 1}_{n-1}, \xi$ for some primitive $m$th root of unity $\xi$. If $A$ is a noncommutative quantum polynomial ring generated in degree 1, such as the Poisson enveloping algebra of a quadratic Poisson algebra, \cite[Theorem 3.1]{KKZ} proves that a quasi-reflection $\phi$ of $A$ necessarily takes one of the following form:
\begin{itemize}
    \item $\phi\big\vert_{A_1}$ has eigenvalues $\underbrace{1, \cdots, 1}_{n-1}, \xi$ for some primitive $m$th root of unity $\xi$,
    
    \item $\phi$ has order 4 and $\phi\big\vert_{A_1}$ has eigenvalues $\underbrace{1, \cdots, 1}_{n-2}, i, -i$.
\end{itemize}

\smallskip

We now review the notion of homological determinant, as introduced by Jing and Zhang
\cite{JZ}. Let $A$ be an Artin-Schelter Gorenstein algebra of injective dimension $n$ and let $\phi$ be a graded automorphism of $A$. The $\phi$-linear map $H_{{A}_{\geq 1}}^n(A) \to \hspace{0.01in} ^{\sigma}H_{{A}_{\geq 1}}^n(A)$ necessarily equals to the product of a nonzero scalar $c$ and the Matlis dual map $(\phi^{-1})'$. The \textit{homological determinant} of $\phi$ to be $\text{hdet} \phi = \displaystyle{\frac{1}{c}}$. In particular, if $A$ is a commutative polynomial ring, the homological determinant $\text{hdet} \phi$ coincides with $\det \phi\big\vert_{A_1}$.

\smallskip

The introduction of the homological determinant, as one might anticipate, aims to furish a noncommutative counterpart to the Watanabe Theorem:

\begin{thm}
(Non-commutative Watanabe Theorem, \cite[Theorem 3.3]{JZ}) Let $A$ be a Noetherian, Artin-Schelter Gorenstein $\Bbbk$-algebra and let $H \subseteq \text{GrAut}(A)$ be a finite subgroup. If $\text{hdet} h = 1$ for all $h \in H$, then $A^H$ is Artin-Schelter Gorenstein.
\end{thm}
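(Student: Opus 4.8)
The plan is to run the classical Watanabe argument in the non-commutative setting, replacing the canonical module by the top local cohomology and the determinant character by the homological determinant. Write $n$ for the injective dimension of $A$, and set $\mathfrak{m} = A_{\geq 1}$ and $\mathfrak{n} = (A^H)_{\geq 1}$ for the two augmentation ideals. The first task is to transfer finiteness from $A$ to $A^H$. Since $H$ is finite and $\Bbbk$ has characteristic $0$, the Reynolds operator $e = \frac{1}{|H|}\sum_{h \in H} h$ is an $A^H$-bimodule projection of $A$ onto $A^H$, so $A^H$ is a graded direct summand of $A$. Combined with the fact that $A$ is module-finite over $A^H$ (the Noetherian finite-group version of Noether's finiteness theorem), this shows that $A^H$ is Noetherian and, crucially, that the functor $(-)^H$ of taking $H$-invariants is exact.

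The central task is to compute the local cohomology of $A^H$. Because $(-)^H$ is exact and $\mathfrak{m}$ is $H$-stable, and because module-finiteness of $A$ over $A^H$ identifies the $\mathfrak{m}$-torsion and $\mathfrak{n}$-torsion functors on $A^H$, one obtains a natural isomorphism $H^i_{\mathfrak{n}}(A^H) \cong \big(H^i_{\mathfrak{m}}(A)\big)^H$ for every $i$. The Artin-Schelter Gorenstein hypothesis on $A$ (via local duality) says that $H^i_{\mathfrak{m}}(A) = 0$ for $i \neq n$ and $H^n_{\mathfrak{m}}(A) \cong A'(l)$ as graded modules, where $A'$ denotes the graded Matlis dual and $l$ is the Gorenstein shift. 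Taking invariants immediately yields $H^i_{\mathfrak{n}}(A^H) = 0$ for $i \neq n$; together with Noetherianity this forces $A^H$ to have finite local cohomological dimension, hence finite injective dimension equal to $n$.

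It remains to identify the top piece $H^n_{\mathfrak{n}}(A^H) \cong \big(A'(l)\big)^H$ and show it is isomorphic to $(A^H)'(l)$, the Matlis dual of $A^H$. This is exactly where the homological determinant enters. By the definition of $\text{hdet}$ recalled above, each $h \in H$ acts on $H^n_{\mathfrak{m}}(A) \cong A'(l)$ as the scalar $(\text{hdet}\,h)^{-1}$ times the Matlis-dual map $(h^{-1})'$. When $\text{hdet}\,h = 1$ for all $h$, the $H$-action on $A'(l)$ is the untwisted contragredient (Matlis-dual) action, so its invariants are the Matlis dual of the coinvariants $A_H$. In characteristic $0$ invariants and coinvariants coincide through the Reynolds operator, and the dual of $A_H$ is $(A^H)'$; chasing the grading shift through gives $H^n_{\mathfrak{n}}(A^H) \cong (A^H)'(l)$. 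Translating back to the $\text{Ext}^i_{A^H}(\Bbbk, A^H)$ formulation via local duality then yields precisely the Gorenstein symmetry condition, completing the argument.

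The main obstacle is this last step: pinning down the exact $H$-equivariant structure on the top local cohomology and verifying that the homological-determinant twist is the \emph{only} obstruction to $\big(A'(l)\big)^H$ being $(A^H)'(l)$. The twist by $\text{hdet}$ is the non-commutative shadow of the determinant character in the classical Watanabe Theorem, so the heart of the proof is showing that its triviality removes the twist and restores the self-dual symmetry needed for Artin-Schelter Gorensteinness.
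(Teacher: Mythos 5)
The paper does not actually prove this statement: it is quoted verbatim as an external result, \cite[Theorem 3.3]{JZ}, and is used purely as a black box here and again in Section 4 (Theorem 4.2 and Corollary 4.5). So there is no internal proof to compare against. Your outline is, however, essentially the argument of the cited source: reduce to local cohomology, identify $H^i_{\mathfrak{n}}(A^H)$ with $\bigl(H^i_{\mathfrak{m}}(A)\bigr)^H$ via the Reynolds operator and module-finiteness of $A$ over $A^H$, and observe that triviality of $\text{hdet}$ is precisely the condition under which the $H$-action on $H^n_{\mathfrak{m}}(A) \cong A'(l)$ is the untwisted contragredient action, so that taking invariants returns $(A^H)'(l)$. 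Two steps in your sketch deserve to be flagged as nontrivial imported theorems rather than routine verifications. First, the claim that $A$ is a finite module over $A^H$ for a noncommutative Noetherian connected graded $A$ and a finite group in characteristic zero is itself a theorem, not an instance of the classical Noether finiteness argument. Second, and more seriously, the passage from ``$R\Gamma_{\mathfrak{n}}(A^H)$ is concentrated in degree $n$ with top piece $(A^H)'(l)$'' to ``$A^H$ has finite injective dimension and satisfies the $\mathrm{Ext}$ condition'' is the converse direction of the local-cohomology characterization of Artin--Schelter Gorensteinness; this is the main technical content of \cite{JZ} and is considerably harder than an appeal to ``local duality.'' Your sentence asserting that finite local cohomological dimension ``forces'' finite injective dimension conceals exactly this theorem. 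With those two inputs explicitly acknowledged as citations, the outline is sound and faithful to the strategy of the original proof.
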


\

\section{Preliminary Lemmas}

In this section, we will present all the requisite lemmas for proving a variant of the Shephard-Todd-Chevalley Theorem for quadratic unimodular Poisson structures on $\Bbbk[x_1, x_2, x_3]$, as well as a variant of the Shephard-Todd-Chevalley Theorem and a variant of the Watanabe Theorem for Poisson enveloping algebras of quadratic Poisson structures on $\Bbbk[x_1, \cdots, x_n]$. We will commence with an equation for the classification of graded Poisson automorphisms of these Poisson structures.
\smallskip

Let $P = \Bbbk[x_1, \cdots, x_n]$ be a Poisson algebra under the standard grading and let $\phi \in \text{PAut}_{\text{gr}}(P)$. The graded Poisson automorphism $\phi$ is uniquely determined by its action on $P_1 = \displaystyle{\bigoplus_{i=1}^{n}\Bbbk x_i}$, allowing it to be represented as an invertible $n \times n$ matrix: $\phi = \begin{bmatrix}a_{ij}\end{bmatrix}_{1 \leq i, j \leq n}$ for some $a_{ij} \in \Bbbk$. For $i \leq j$,
\begin{align}
\phi(\{x_i, x_j\}) = \{\phi(x_i), \phi(x_j)\} = \bigg\{\sum_{k=1}^{n}a_{ik}x_k, \sum_{j=1}^{n}a_{jl}x_l\bigg\} = 
\sum_{1 \leq k < l \leq n}(a_{ik}a_{jl} - a_{il}a_{jk})\{x_k, x_l\}. 
\end{align}

\smallskip

When investigating invariant subalgebras, a common objective is to ascertain whether two Poisson structures are isomorphic. Generally, establishing the non-isomorphism between a Poisson algebra $P$ and another Poisson algebra $Q$, by demonstrating the absence of suitable mappings, can be a challenging task. In this paper, we primarily rely on the following two lemmas to differentiate Poisson algebras fro their invariant subalgebras.

\smallskip

The first lemma asserts that unimodularity can serve as a distinguishing characteristic for Poisson algebras.

\begin{lem}
Let $P = \Bbbk[x_1, \cdots, x_n]$, $Q = \Bbbk[y_1, \cdots, y_n]$ be Poisson algebras. If $P$ is unimodular and $Q$ is non-unimodular, then $P$ is not isomorphic to $Q$ as Poisson algebras.
\end{lem}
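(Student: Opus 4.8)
The plan is to prove the contrapositive: I will show that unimodularity is preserved under any Poisson isomorphism, so that a unimodular $P$ and a non-unimodular $Q$ cannot be isomorphic. The invariant that detects unimodularity is the \emph{modular vector field}. For a polynomial Poisson algebra $R = \Bbbk[z_1,\dots,z_n]$ with bracket determined by $\pi_{ij} = \{z_i,z_j\}$, I would take the modular vector field with respect to the standard volume form to be the derivation $m_R = \sum_{i} \big(\sum_j \partial_{z_j}\pi_{ij}\big)\partial_{z_i}$, which is precisely the divergence of the Hamiltonian vector fields $X_f = \{f,-\}$ (the Hessian contribution cancels by antisymmetry of $\pi$). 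Recalling that $R$ is unimodular exactly when the class of $m_R$ vanishes in the first Poisson cohomology — and, for the Jacobian structures of interest, when $m_R = 0$ outright — the whole lemma reduces to the claim that a Poisson isomorphism carries $m_P$ to $m_Q$.

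So first I would set $q_i = \varphi(x_i)$ for a hypothetical Poisson isomorphism $\varphi: P \to Q$. Since $\varphi$ is in particular an algebra isomorphism of polynomial rings, the tuple $(q_1,\dots,q_n)$ is a polynomial coordinate system on $Q$, and its Jacobian determinant $J = \det(\partial q_i/\partial y_j)$ is a \emph{nonzero constant}: applying the chain rule to $\varphi^{-1}\circ\varphi = \mathrm{id}$ shows that $J$ is a unit of $\Bbbk[y_1,\dots,y_n]$, hence lies in $\Bbbk^\times$. This constancy is the crucial point, because it means $\varphi$ sends the standard volume form of $P$ to a constant multiple of the standard volume form of $Q$.

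Next I would invoke the naturality of the divergence operation: since $\varphi$ is a Poisson map it intertwines Hamiltonian vector fields, $\varphi_* X^P_f = X^Q_{\varphi(f)}$, and divergence taken with respect to corresponding volume forms commutes with pushforward. Because the two volume forms differ only by the constant $J$, and rescaling a volume form by a constant leaves the associated modular vector field unchanged, I obtain $\varphi_* m_P = m_Q$ with no Hamiltonian correction term. As $\varphi_*(D) = \varphi\circ D\circ\varphi^{-1}$ is a bijection on derivations carrying Hamiltonian vector fields to Hamiltonian vector fields, this identity forces $m_P$ and $m_Q$ to share the same vanishing behaviour, both as vector fields and as cohomology classes. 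Hence $P$ is unimodular if and only if $Q$ is, contradicting the hypotheses, and the lemma follows.

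I expect the main obstacle to be the transformation law in the third step — verifying cleanly that $\varphi_* m_P = m_Q$. The subtlety is that under a general change of coordinates the modular vector field transforms only up to a Hamiltonian (exact) term coming from the logarithmic derivative of the Jacobian, so that it is the modular \emph{class}, rather than the modular vector field itself, which is naturally preserved. What rescues the stronger, volume-level statement here is exactly that a polynomial automorphism has constant Jacobian, so the logarithmic-derivative correction vanishes identically; I would take care to isolate this fact, since it is what lets the argument go through without appealing to the full machinery of Poisson cohomology.
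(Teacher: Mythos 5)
Your proposal is correct and follows essentially the same route as the paper: both arguments use the modular derivation as an invariant of the Poisson structure and observe that a Poisson isomorphism of polynomial rings transports the modular derivation of $P$ to that of $Q$, forcing $Q$ to be unimodular. The only difference is one of packaging — the paper cites \cite{TWZ} for the independence of the modular derivation from the choice of polynomial generators, whereas you supply the justification yourself via the constancy of the Jacobian of a polynomial automorphism, which kills the Hamiltonian (logarithmic-derivative) correction term.
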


\begin{proof}
Let $\underline{m}_P, \underline{m}_Q$ be the modular derivations of $P, Q$, respectively. Suppose that $P$ is isomorphic to $Q$ as Poisson algebras through an isomorphism $\phi$. Given that the modular derivation $\underline{m}_Q$ is independent of the generators \cite[page 6]{TWZ}, we may compute $\underline{m}_Q$ with respect to the generators $\{\phi(x_1), \cdots, \phi(x_n)\}$, instead of the conventional generators $\{y_1, \cdots, y_n\}$. For all $f \in Q$,
\[
\underline{m}_Q(f) = \sum_{i=1}^{n}\frac{\partial \{\phi(x_i), \phi(g)\}}{\partial \phi(x_i)} = \sum_{i=1}^{n}\frac{\partial \phi\{x_i, g\}}{\partial \phi(x_i)} = \phi\left(\sum_{i=1}^{n}\frac{\partial \{x_i, g\}}{\partial x_i}\right) = \phi(m_P(g)) = 0,
\]
for some $g \in P$. By definition, the Poisson algebra $Q$ is unimodular, a contradiction.
\end{proof}

\smallskip

The second lemma, while less general in scope compared to the former one, have nonetheless been proven to be highly useful in our paper. 

\begin{lem}
Let $P = \Bbbk[x_1, x_2, x_3]$ be a quadratic Poisson algebra. If $Q = \Bbbk[y_1, y_2, y_3]$ is a Poisson algebra satisfying the following conditions:
\begin{enumerate}[label = (\arabic*)]
    \item there exists a pair $(i, j) \in \{1,2,3\}^{\oplus 2}$ such that $\{y_i, y_j\} = f(y_1, y_2) - r y_3$ for some homogeneous $f$ with respect to the grading $\deg(y_1) = \deg(y_2) = 1$, and some $r\in \Bbbk^{\times}$,

    \item the bracket $\{y_k, y_l\}$ is a scalar multiple of a single monomial in $y_1$, $y_2$, $y_3$, for all $(k, l) \in \{1,2,3\}^{\oplus 2} \backslash \{(i, j)\}$,
\end{enumerate}
then $P$ is not isomorphic to $Q$ as Poisson algebras.
\end{lem}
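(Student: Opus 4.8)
The plan is to isolate a property of Poisson algebras that is preserved under arbitrary Poisson isomorphism, holds for every quadratic $P$, yet fails for every $Q$ of the stated form; the non-isomorphism is then immediate. The property I will use is
\[
(\star)\qquad \text{there is a maximal ideal } \mathfrak{m} \text{ with } R/\mathfrak{m} = \Bbbk \text{ such that } \{\mathfrak{m}, \mathfrak{m}\} \subseteq \mathfrak{m}^2 .
\]
Geometrically $(\star)$ says the Poisson bivector vanishes to order at least two at the corresponding $\Bbbk$-point, but phrasing it ideal-theoretically makes invariance transparent: if $\psi\colon R \to R'$ is a Poisson isomorphism and $\mathfrak{m}$ witnesses $(\star)$ for $R$, then $\psi(\mathfrak{m})$ is maximal with $R'/\psi(\mathfrak{m}) = \Bbbk$ and $\{\psi(\mathfrak{m}),\psi(\mathfrak{m})\} = \psi(\{\mathfrak{m},\mathfrak{m}\}) \subseteq \psi(\mathfrak{m}^2) = \psi(\mathfrak{m})^2$, so $(\star)$ passes to $R'$. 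Thus if one of $P,Q$ satisfies $(\star)$ and the other does not, they cannot be isomorphic as Poisson algebras.

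First I would verify that $P$ satisfies $(\star)$ at the origin $\mathfrak{m} = (x_1, x_2, x_3)$. Since $P$ is quadratic, each $\{x_i, x_j\}$ is homogeneous of degree $2$ and hence lies in $\mathfrak{m}^2$; feeding this into the biderivation identity $\{u, v\} = \sum_{i,j}\{x_i, x_j\}\,\tfrac{\partial u}{\partial x_i}\tfrac{\partial v}{\partial x_j}$ gives $\{R,R\} \subseteq \mathfrak{m}^2$, in particular $\{\mathfrak{m}, \mathfrak{m}\} \subseteq \mathfrak{m}^2$. (This also covers the degenerate case in which $P$ is the trivial Poisson algebra, for which $(\star)$ holds vacuously, so no separate treatment is needed.) Next I would show $Q$ fails $(\star)$ at \emph{every} point. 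Fix a maximal ideal $\mathfrak{m}_p$ with $p = (b_1, b_2, b_3)$ and set $h := \{y_i, y_j\} = f(y_1, y_2) - r y_3$. If $h$ lay in $\mathfrak{m}_p^2$ it would vanish to order $\geq 2$ at $p$, forcing in particular $\partial h/\partial y_3\,(p) = 0$; but $f$ involves only $y_1, y_2$, so $\partial h/\partial y_3 = -r \neq 0$ identically. Hence $h \notin \mathfrak{m}_p^2$, and since $h = \{y_i - b_i, y_j - b_j\} \in \{\mathfrak{m}_p, \mathfrak{m}_p\}$ this gives $\{\mathfrak{m}_p, \mathfrak{m}_p\} \not\subseteq \mathfrak{m}_p^2$. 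As $p$ was arbitrary, $Q$ does not satisfy $(\star)$, and therefore $P \not\cong Q$.

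The main subtlety to guard against — and the reason for routing the argument through $(\star)$ rather than through the naive remark that $Q$ is not a graded quadratic structure — is that a Poisson isomorphism between two polynomial algebras need be neither graded nor origin-preserving, so a degree count at one fixed point is not by itself an invariant. Quantifying over all $\Bbbk$-points in $(\star)$ removes this difficulty completely, and it is what lets the proof avoid any appeal to the Dufour--Haraki classification or to a case analysis on $\Omega$. I would also remark that only hypothesis (1) is actually used, specifically the presence of the linear summand $-r y_3$ with $r \neq 0$ in a variable absent from $f$; hypothesis (2) plays no role in the non-isomorphism and presumably serves the surrounding applications in Section~3.
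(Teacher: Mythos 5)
Your argument is correct, and it is genuinely different from the one in the paper. The paper passes to the quotient algebra $R/(\{R,R\})$ by the ideal generated by all brackets: for quadratic $P$ this quotient is connected graded and minimally generated by $3$ elements in degree one, whereas for $Q$ the relation $f(y_1,y_2)-ry_3$ lets one eliminate $y_3$, so the quotient is generated by at most $2$ elements; a cited lemma on connected graded algebras generated in degree one then forbids an isomorphism. Your route replaces this global, graded invariant by the local, manifestly isomorphism-invariant condition $(\star)$ that the bracket vanish to order $\geq 2$ at some $\Bbbk$-point, verified for $P$ at the origin and refuted for $Q$ at every point via $\partial\{y_i,y_j\}/\partial y_3=-r\neq 0$. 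Each approach has something to recommend it: the paper's quotient construction is reusable for distinguishing Poisson structures where no point of order-two vanishing separates them, and it makes explicit use of hypothesis (2) to compute the quotient; your argument is more elementary (no appeal to the graded-generation lemma), handles the non-graded, non-origin-preserving nature of a general Poisson isomorphism by quantifying over all points rather than by working with an intrinsically defined ideal, and, as you note, uses only hypothesis (1), so it in fact proves a stronger statement than the lemma as stated. One small point worth recording if you write this up: the inclusion $\mathfrak{m}_p^2\subseteq\{h: h(p)=0,\ dh(p)=0\}$ you invoke is the Taylor-expansion characterization of $\mathfrak{m}_p^2$ in a polynomial ring, which is where the hypothesis that $Q$ is a polynomial algebra (and $\Bbbk$ algebraically closed, so every maximal ideal is an $\mathfrak{m}_p$) enters.
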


\begin{proof}
Suppose, for the sake of contradiction, that $P$ and $Q$ are isomorphic as Poisson algebras. Observe that such a Poisson isomorphism passes to a $\Bbbk$-algebra isomorphism:

\[
P/(\{P, P\}) \xrightarrow{\sim} Q/(\{Q, Q\})
\]

For the $\Bbbk$-algebra $P$, the ideal $(\{P, P\})$ is generated by $\{x_i,x_j\}$, where $1 \leq i, j \leq 3$, because:
\[
\{g, h\} = \sum_{1 \leq i, j \leq 3}\frac{\partial g}{\partial x_i}\frac{\partial h}{\partial x_j}\{x_i, x_j\},
\]
for all $g, h \in P$. The quotient algebra $P/(\{P, P\})$ is a connected $\mathbb{N}$-graded algebra that is finitely generated in degree 1, as the Poisson algebra $P$ is quadratic and consequently the ideal $(\{P, P\})$ admits a set of homogeneous generators of degree 2.

For the $\Bbbk$-algebra $Q$, by the same reasoning, the ideal $\{Q, Q\}$ is generated by $\{y_i, y_j\}$, where $1 \leq i, j \leq 3$. Based on the assumptions, we can write
\[
\{y_a, y_b\}_Q = f(y_1, y_2) - r y_1, \quad \{y_b, y_c\}_Q = sy_1^{\alpha_1}y_2^{\alpha_2}y_3^{\alpha_3}, \quad \{y_c, y_a\}_Q = ty_1^{\beta_1}y_2^{\beta_2}y_3^{\beta_3},
\]
for some $\{a,b,c\}$ that is a relabeling of $\{1,2,3\}$, $s, t \in \Bbbk$, and $\alpha_1, \alpha_2, \alpha_3, \beta_1, \beta_2, \beta_3 \geq 0$. The quotient algebra
\begin{align*}
Q/(\{Q,Q\}) =& \Bbbk[y_1,y_2,y_3]/(f - ry_1, sy_1^{\alpha_1}y_2^{\alpha_2}y_3^{\alpha_3}, ty_1^{\beta_1}y_2^{\beta_2}y_3^{\beta_3})\\
\cong& \Bbbk[y_1,y_2]/(\frac{s}{r^{\alpha_3}}y_1^{\alpha_1}y_2^{\alpha_2}f^{\alpha_3}, \frac{t}{r^{\beta_3}}y_1^{\beta_1}y_2^{\beta_2}f^{\beta_3})
\end{align*}
is a connected $\mathbb{N}$-graded algebra that is finitely generated in degree 1, as $f$ is a homogeneous polynomial with respect to the grading $\deg(y_1) = \deg(y_2) = 1$. 

Based on the preceding argument, \cite[Lemma 4.1]{GW2} applies. However, 
\[
\dim_{\Bbbk}\big(P/(\{P,P\})\big) = 3 \neq 2 \geq \dim_{\Bbbk}\big(Q/(\{Q,Q\})\big),
\]
a contradiction.
\end{proof}

\smallskip

Next, we transition to the examination of the invariants of Poisson enveloping algebras. Let $P = \Bbbk[x_1,\cdots,x_n]$ be a quadratic Poisson algebra and let $U(P)$ be its Poisson enveloping algebra. Let $\phi \in \text{PAut}_{\text{gr}}(P)$. As in \cite[Lemma 5.1]{GVW}, let $\widetilde{\phi}$ denote the induced graded algebra automorphism of $U(P)$ defined as follows:
\begin{align*}
    \widetilde{\phi}(x_i) = \phi(x_i), \quad \widetilde{\phi}(y_i) = \sum_{j=1}^{n}\frac{\partial g(x_i)}{\partial x_j}y_j,
\end{align*}
for all $1 \leq i \leq n$. This is the natural action to consider for the following reasons:

\begin{lem}
Let $P = \Bbbk[x_1, \cdots, x_n]$ be a quadratic Poisson algebra. Suppose that $\phi$ is a graded Poisson automorphism of $P$. Then there exists a unique graded automorphism $\widetilde{\phi}$ on the Poisson enveloping algebra $U(P)$ such that
\[
\alpha \circ \phi = \widetilde{\phi} \circ \alpha, \quad \beta \circ \phi = \widetilde{\phi} \circ \beta.
\]
\end{lem}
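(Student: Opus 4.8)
The plan is to invoke the universal property of the Poisson enveloping algebra directly, rather than to work through the generators and relations of Theorem 1.7. First I would set $\alpha' = \alpha \circ \phi$ and $\beta' = \beta \circ \phi$, both regarded as maps $P \to U(P)$, and verify that the triple $(U(P), \alpha', \beta')$ again satisfies the two defining identities of a Poisson enveloping algebra. Since $\phi$ is a graded Poisson automorphism, it is simultaneously an algebra homomorphism and a Lie algebra homomorphism, so $\alpha'$ is an algebra homomorphism and $\beta'$ is a Lie algebra homomorphism. For identity (1), I would compute $\alpha'(\{a,b\}) = \alpha(\phi(\{a,b\})) = \alpha(\{\phi(a),\phi(b)\})$ using that $\phi$ preserves the bracket, and then apply identity (1) for the original triple to rewrite this as $\beta(\phi(a))\alpha(\phi(b)) - \alpha(\phi(b))\beta(\phi(a)) = \beta'(a)\alpha'(b) - \alpha'(b)\beta'(a)$; identity (2) follows by the same substitution together with $\phi(ab) = \phi(a)\phi(b)$.

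With $(U(P), \alpha', \beta')$ established as a valid triple, the universal property of $(U(P), \alpha, \beta)$ furnishes a unique algebra homomorphism $\widetilde{\phi}: U(P) \to U(P)$ satisfying $\widetilde{\phi} \circ \alpha = \alpha' = \alpha \circ \phi$ and $\widetilde{\phi} \circ \beta = \beta' = \beta \circ \phi$, which is exactly the required compatibility; its uniqueness is immediate from the uniqueness clause of the universal property.

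To upgrade $\widetilde{\phi}$ from an endomorphism to an automorphism, I would run the identical construction on $\phi^{-1}$ to obtain a homomorphism $\widetilde{\phi^{-1}}$, and then observe that the composite $\widetilde{\phi} \circ \widetilde{\phi^{-1}}$ is an algebra endomorphism of $U(P)$ that commutes with both $\alpha$ and $\beta$, using $\phi \circ \phi^{-1} = \mathrm{id}_P$. Since the identity map is the unique such endomorphism, again by the uniqueness clause applied to the triple $(U(P), \alpha, \beta)$ itself, I conclude $\widetilde{\phi} \circ \widetilde{\phi^{-1}} = \mathrm{id}$, and symmetrically in the other order, so $\widetilde{\phi}$ is invertible.

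Finally, for gradedness I would locate the algebra generators of $U(P)$ inside the images of $\alpha$ and $\beta$: one has $x_i = \alpha(x_i)$ and $y_i = \beta(x_i)$, both of degree $1$ for a quadratic Poisson structure. Writing $\phi(x_i) = \sum_j a_{ij} x_j$, the compatibility relations give $\widetilde{\phi}(x_i) = \alpha(\phi(x_i)) = \phi(x_i) \in P_1$ and $\widetilde{\phi}(y_i) = \beta(\phi(x_i)) = \sum_j a_{ij} y_j \in U(P)_1$; since $\widetilde{\phi}$ is an algebra homomorphism carrying degree-$1$ generators to degree-$1$ elements, it preserves the grading, and this computation simultaneously recovers the explicit formula stated just before the lemma. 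The only point demanding care is the direction of the universal property together with the bookkeeping in verifying identities (1) and (2); everything else is formal, so I do not anticipate a genuine obstacle.
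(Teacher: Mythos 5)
Your proof is correct, but it takes a genuinely different route from the paper. The paper proceeds by explicit construction: it defines $\widetilde{\phi}$ on the generators by $\widetilde{\phi}(x_i) = \phi(x_i)$ and $\widetilde{\phi}(y_i) = \sum_j \frac{\partial \phi(x_i)}{\partial x_j}y_j$, checks the two commutation identities by evaluating both sides on the $x_i$, and proves uniqueness by noting that any competing automorphism must agree with $\widetilde{\phi}$ on generators. You instead observe that $(U(P), \alpha\circ\phi, \beta\circ\phi)$ is again a triple satisfying the defining identities (which does use that $\phi$ is both an algebra and a Lie algebra homomorphism, exactly as you compute) and let the universal property manufacture $\widetilde{\phi}$ together with its uniqueness. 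Your approach buys two things the paper's computation leaves implicit: well-definedness (the paper never verifies that its generator assignment respects the relations of $U(P)$ from Theorem 1.7, whereas the universal property sidesteps this entirely) and bijectivity (your argument via $\widetilde{\phi}\circ\widetilde{\phi^{-1}} = \mathrm{id}$, forced by the uniqueness clause applied to the triple $(U(P),\alpha,\beta)$ itself, is a clean functoriality argument that the paper omits). What the paper's approach buys is the explicit matrix formula for $\widetilde{\phi}(y_i)$ up front, which is what the rest of the paper actually computes with --- though you recover the same formula at the end from $\widetilde{\phi}(y_i) = \beta(\phi(x_i))$, and your gradedness argument via degree-one generators is sound. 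Both proofs are valid; yours is the more conceptual and, if anything, the more complete of the two.
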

\begin{proof}
Define $\widetilde{\phi}: U(P) \to U(P)$ as follows: $\widetilde{\phi}(x_i) = \phi(x_i)$ and $\widetilde{\phi}(y_i) = \displaystyle{\sum_{j=1}^{n}\frac{\partial \phi(x_i)}{\partial x_j}y_j}$, for all $1 \leq i \leq n$. For commutativity, $\alpha \circ \phi = \widetilde{\phi} \circ \alpha$; 
\[
\beta \circ \phi(x_i) = \sum_{j=1}^{n}\frac{\partial \phi(x_i)}{\partial x_j}y_j, \quad
\widetilde{\phi} \circ \beta(x_i) = \widetilde{\phi}(y_i) = \sum_{j=1}^{n}\frac{\partial \phi(x_i)}{\partial x_j}y_j,
\]
for all $1 \leq i \leq n$, and therefore $\beta \circ \phi = \widetilde{\phi} \circ \beta$. Suppose that $\widetilde{\widetilde{\phi}}: U(P) \to U(P)$ is another graded automorphism such that $\alpha \circ \phi = \widetilde{\widetilde{\phi}} \circ \alpha$ and $\beta \circ \phi = \widetilde{\widetilde{\phi}} \circ \beta$. From the commutativity $\alpha \circ \phi = \widetilde{\widetilde{\phi}} \circ \alpha$, we have $\widetilde{\widetilde{\phi}}(x_i) = \phi(x_i) = \widetilde{\phi}(x_i)$, for all $1 \leq i \leq n$. From the commutativity $\beta \circ \phi = \widetilde{\widetilde{\phi}} \circ \beta$, we have
\[
\widetilde{\widetilde{\phi}}(y_i) = \widetilde{\widetilde{\phi}} \circ \beta(x_i) = \beta \circ \phi(x_i) = \widetilde{\phi} \circ \beta(x_i) = \widetilde{\phi}(y_i),
\]
for all $1 \leq i \leq n$. Since $\widetilde{\widetilde{\phi}}$ and $\widetilde{\phi}$ agree on the generators of $U(P)$, the graded automorphism $\widetilde{\widetilde{\phi}}$ coincides with the graded automorphism $\widetilde{\phi}$.
\end{proof}

Retain the above notations. Suppose that $G$ is a subgroup of the graded Poisson automorphism of $P$. By Lemma 4.1, we can construct a subgroup $\widetilde{G} = \{\widetilde{\phi}: \phi \in G\}$ of the graded automorphism group of $U(P)$. It is natural to ask: is $\widetilde{G}$ isomorphic to $G$ as groups? Once again, the answer is affirmative.

\begin{lem}
Let $P = \Bbbk[x_1,\cdots,x_n]$ be a quadratic Poisson algebra. Suppose that $G$ is a subgroup of the graded Poisson automorphism group of $P$ and $\widetilde{G} = \{\widetilde{\phi}: \phi \in G\}$ is the corresponding subgroup of the graded automorphism group of $U(P)$. Then $\widetilde{G}$ is isomorphic to $G$ as groups. 
\end{lem}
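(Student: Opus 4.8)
The plan is to show that the assignment $\Phi: G \to \widetilde{G}$, $\phi \mapsto \widetilde{\phi}$, is an isomorphism of groups. By the very definition $\widetilde{G} = \{\widetilde{\phi} : \phi \in G\}$, the map $\Phi$ is surjective, so it remains to verify that $\Phi$ is a group homomorphism and that it is injective.

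First I would establish the homomorphism property, $\widetilde{\phi \circ \psi} = \widetilde{\phi} \circ \widetilde{\psi}$, by invoking the uniqueness clause of the preceding lemma. The point is that $\widetilde{\phi} \circ \widetilde{\psi}$ is itself a graded automorphism of $U(P)$ which satisfies the two intertwining relations attached to the Poisson automorphism $\phi \circ \psi$. Indeed, chaining the relations $\alpha \circ \phi = \widetilde{\phi} \circ \alpha$ and $\alpha \circ \psi = \widetilde{\psi} \circ \alpha$ (and similarly for $\beta$) gives
\[
\alpha \circ (\phi \circ \psi) = (\widetilde{\phi} \circ \widetilde{\psi}) \circ \alpha, \qquad \beta \circ (\phi \circ \psi) = (\widetilde{\phi} \circ \widetilde{\psi}) \circ \beta.
\]
Since the preceding lemma asserts that the graded automorphism intertwining $\alpha, \beta$ with $\phi \circ \psi$ is \emph{unique}, and since $\widetilde{\phi \circ \psi}$ is by construction such an automorphism, the two must coincide. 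I expect this step to carry the real content, as everything hinges on the uniqueness statement; the verification of the two intertwining identities is a direct substitution, but it must be carried out for both $\alpha$ and $\beta$ in order to legitimately invoke uniqueness.

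Finally I would prove injectivity. Since $\Phi$ is a homomorphism, it suffices to show its kernel is trivial, that is, that $\widetilde{\phi} = \mathrm{id}_{U(P)}$ forces $\phi = \mathrm{id}_P$. If $\widetilde{\phi}$ is the identity, then in particular $\widetilde{\phi}(x_i) = x_i$ for all $i$; but $\widetilde{\phi}(x_i) = \phi(x_i)$, so $\phi$ fixes each $x_i$ and hence acts as the identity on $P_1$. As a graded Poisson automorphism is uniquely determined by its restriction to $P_1$, we conclude $\phi = \mathrm{id}_P$. Thus $\Phi$ is an injective and surjective group homomorphism, yielding the desired isomorphism $G \cong \widetilde{G}$.
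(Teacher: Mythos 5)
Your proof is correct, and it reaches the key step --- the identity $\widetilde{\phi\circ\psi}=\widetilde{\phi}\circ\widetilde{\psi}$ --- by a genuinely different route than the paper. The paper verifies the homomorphism property by a direct computation on the generators $y_i$: it expands $\widetilde{\phi_1}\widetilde{\phi_2}(y_i)$ and collapses the double sum using the multivariable chain rule (formally, the commutativity of $\phi_1$ with the passage from $\partial/\partial x_j$ to $\partial/\partial \phi_1(x_j)$), which is where the real work lies. You instead observe that $\widetilde{\phi}\circ\widetilde{\psi}$ is a graded automorphism of $U(P)$ intertwining $\alpha$ and $\beta$ with $\phi\circ\psi$, and then invoke the uniqueness clause of the preceding lemma to conclude it must equal $\widetilde{\phi\circ\psi}$; the chaining $\alpha\circ(\phi\circ\psi)=(\widetilde{\phi}\circ\widetilde{\psi})\circ\alpha$ and its $\beta$-analogue are immediate, so the Jacobian computation is entirely absorbed into the uniqueness statement already proved. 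Your argument is cleaner and more conceptual (it is the standard functoriality-from-universal-property pattern), at the cost of leaning on the uniqueness assertion rather than exhibiting the identity explicitly; the paper's computation has the minor virtue of being self-contained and of recording the chain-rule identity that is reused elsewhere (e.g.\ in Lemma 2.5). Your treatment of injectivity via triviality of the kernel is equivalent to the paper's direct comparison of $\phi_1(x_i)$ and $\phi_2(x_i)$, and surjectivity is by construction in both.
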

\begin{proof}
Define $G \to \widetilde{G}$ by $\phi \mapsto \widetilde{\phi}$. First, we claim that this mapping is a group homomorphism. Let $\phi_1, \phi_2 \in G$. It is clear that $\widetilde{\phi_1}\widetilde{\phi_2}$ and $\widetilde{\phi_1\phi_2}$ agree on the generators $x_1, \cdots, x_n$. In the meantime, on the generators $y_1, \cdots, y_n$,
\begingroup
\allowdisplaybreaks
\begin{align*}
\widetilde{\phi_1}\widetilde{\phi_2}(y_i) &= \widetilde{\phi_1}(\widetilde{\phi_2}(y_i)) \hspace{5.45cm}\\
&= \widetilde{\phi_1}\left(\sum_{j=1}^{n}\frac{\partial \phi_2(x_i)}{\partial x_j}y_j\right)\\
&= \sum_{j=1}^{n}\phi_1\left(\frac{\partial \phi_2(x_i)}{\partial x_j}\right)\left(\sum_{k=1}^{n}\frac{\partial \phi_1(x_j)}{\partial x_k}y_k\right)\\
&= \sum_{j=1}^{n}\sum_{k=1}^{n}\phi_1\left(\frac{\partial \phi_2(x_i)}{\partial x_j}\right)\left(\frac{\partial \phi_1(x_j)}{\partial x_k}y_k\right)\\
&= \sum_{j=1}^{n}\sum_{k=1}^{n}\frac{\partial \phi_1\left(\phi_2(x_i)\right)}{\partial \phi_1(x_j)}\frac{\partial \phi_1(x_j)}{\partial x_k}y_k\\
&= \sum_{k=1}^{n}\frac{\partial \phi_1\left(\phi_2(x_i)\right)}{\partial x_k}y_k\\
&= \widetilde{\phi_1\phi_2}(y_i),
\end{align*}
\endgroup
for all $1 \leq i \leq n$, in which the fifth equality follows from the commutativity of the following diagram:
\[
\begin{tikzcd}
\Bbbk[x_1, \cdots, x_n] \arrow[rr, "\frac{\partial}{\partial x_j}"] \arrow[dd,"\phi_1",swap] & &\Bbbk[x_1, \cdots, x_n] \arrow[dd,"\phi_1"]\\
\\
\Bbbk[x_1, \cdots, x_n] \arrow[rr, "\frac{\partial}{\partial \phi_1(x_j)}",swap] & &\Bbbk[x_1, \cdots, x_n]
\end{tikzcd}
\]

For injectivity, suppose that $\widetilde{\phi_1} = \widetilde{\phi_2}$. On the generators $x_1, \cdots, x_n$, $\phi_1(x_i) = \widetilde{\phi_1}(x_i) = \widetilde{\phi_2}(x_i) = \phi_2(x_i)$, for all $1 \leq i \leq n$. Consequently, $\phi_1 = \phi_2$. Finally, surjectivity follows easily from the construction.
\end{proof}

\smallskip

The subsequent lemma examines the eigenvalues of the induced automorphisms on the Poisson enveloping algebra $U(P)$.

\begin{lem}
Let $P = \Bbbk[x_1, \cdots, x_n]$ be a quadratic Poisson algebra. Let $\phi$ be a graded Poisson automorphism of $P$ and let $\widetilde{\phi}$ be the corresponding graded automorphism of $U(P)$. Suppose that $\phi\big\vert_{P_1}$ has eigenvalues $\lambda_1, \cdots, \lambda_m$, with multiplicity $c_1, \cdots, c_m$, respectively. Then $\widetilde{\phi}\big\vert_{U(P)_1}$ has eigenvalues $\lambda_1, \cdots, \lambda_m$, with multiplicity $2c_1, \cdots, 2c_m$, respectively.
\end{lem}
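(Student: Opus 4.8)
The plan is to compute the matrix of $\widetilde{\phi}$ restricted to the degree-one component $U(P)_1$ directly, and to observe that it is block-diagonal with two blocks that each carry the eigenvalues of $\phi\big\vert_{P_1}$. Since the whole statement is about a single linear-algebraic invariant of $\widetilde{\phi}\big\vert_{U(P)_1}$, the argument should be short and essentially computational.

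First I would pin down $U(P)_1$. By the Poincar\'e--Birkhoff--Witt basis of $U(P)$ (property (2) above, from \cite[Theorem 3.7]{OPS}), the degree-one part is exactly the $2n$-dimensional space
\[
U(P)_1 = \Bbbk x_1 \oplus \cdots \oplus \Bbbk x_n \oplus \Bbbk y_1 \oplus \cdots \oplus \Bbbk y_n,
\]
and since $\widetilde{\phi}$ is graded it restricts to a linear automorphism of $U(P)_1$. Next I would write $\phi$ in matrix form as in equation (1.1): setting $\phi(x_i) = \sum_{j=1}^{n} a_{ij} x_j$, so that $A = [a_{ij}]$ represents $\phi\big\vert_{P_1}$, the homogeneity of $\phi(x_i)$ gives $\frac{\partial \phi(x_i)}{\partial x_j} = a_{ij}$. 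Feeding this into the defining formula for the induced automorphism yields
\[
\widetilde{\phi}(x_i) = \sum_{j=1}^{n} a_{ij} x_j, \qquad \widetilde{\phi}(y_i) = \sum_{j=1}^{n} a_{ij} y_j .
\]
Thus $\widetilde{\phi}$ preserves both $P_1 = \operatorname{span}(x_i)$ and $V := \operatorname{span}(y_i)$, acting by the matrix $A$ on each; in the ordered basis $(x_1,\dots,x_n,y_1,\dots,y_n)$ its matrix is the block-diagonal form $\begin{bmatrix} A & 0 \\ 0 & A \end{bmatrix}$.

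From here I would conclude at the level of characteristic polynomials. Because $U(P)_1 = P_1 \oplus V$ is a $\widetilde{\phi}$-stable direct sum decomposition with $\widetilde{\phi}\big\vert_{P_1}$ and $\widetilde{\phi}\big\vert_{V}$ both conjugate to $\phi\big\vert_{P_1}$, we get $\det\!\big(tI - \widetilde{\phi}\big\vert_{U(P)_1}\big) = \det(tI - A)^2$. Hence every eigenvalue $\lambda_r$ of $\phi\big\vert_{P_1}$ occurs in $\widetilde{\phi}\big\vert_{U(P)_1}$ with algebraic multiplicity exactly $2c_r$, and since the two blocks are honest direct summands the same doubling holds for geometric multiplicities. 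The only step requiring any care — and the closest thing here to an obstacle — is the bookkeeping identity $\frac{\partial \phi(x_i)}{\partial x_j} = a_{ij}$, which depends on $\phi$ being \emph{graded} so that each $\phi(x_i)$ is homogeneous of degree one; beyond that verification there is no real difficulty.
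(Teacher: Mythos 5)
Your proof is correct. The key computation --- that $\widetilde{\phi}(y_i) = \sum_{j} a_{ij}y_j$ because $\frac{\partial \phi(x_i)}{\partial x_j} = a_{ij}$ when $\phi$ is graded --- is exactly the observation the paper's proof rests on, but you package it differently. The paper fixes a basis $\{v_{i,1},\dots,v_{i,c_i}\}$ of each eigenspace of $\phi\big\vert_{P_1}$, checks directly that each $v_{i,j}$ and each $\sum_{k}\frac{\partial \phi(v_{i,j})}{\partial x_k}y_k$ is a $\lambda_i$-eigenvector of $\widetilde{\phi}$, and then counts $\sum_i 2c_i = 2n$ to conclude these form an eigenbasis of $U(P)_1$. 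You instead observe that the matrix of $\widetilde{\phi}\big\vert_{U(P)_1}$ in the basis $(x_1,\dots,x_n,y_1,\dots,y_n)$ is block-diagonal with two copies of $A$ and square the characteristic polynomial. Your route is marginally more robust: the paper's eigenbasis argument tacitly assumes $\phi\big\vert_{P_1}$ is diagonalizable (automatic for finite-order automorphisms in characteristic $0$, but the lemma as stated only assumes $\phi$ is graded), whereas the characteristic-polynomial computation yields the doubling of algebraic multiplicities unconditionally, and the $\widetilde{\phi}$-stable decomposition $U(P)_1 = P_1 \oplus V$ gives the geometric statement as well. Either version suffices for the downstream uses in Theorem 4.1 and Lemma 4.3.
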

\begin{proof}
The Poisson enveloping algebra $U(P)$ is a quadratic $\Bbbk$-algebra generated by $x_1, \cdots, x_n$, $y_1, \cdots, y_n$. Fix $1 \leq i \leq m$. Let $\{v_{i,1}, \cdots, v_{i,c_i}\}$ be a basis for the eigenspace of $\lambda_i$ in $P_1$. By calculation,
\[
\widetilde{\phi}(v_{i,j}) = \phi(v_{i,j}) = \lambda_i v_{i,j},
\]
\begin{align*}
\widetilde{\phi}(\sum_{k=1}^{n}\frac{\partial \phi(v_{i,j})}{\partial x_k}y_k) =
\sum_{k=1}^{n}\phi\left(\frac{\partial (\lambda_i v_{i,j})}{\partial x_k}\right)\widetilde{\phi}(y_k) =&
\lambda_i \sum_{k=1}^{n}\phi(\frac{\partial v_{i,j}}{\partial x_k})\widetilde{\phi}(y_k)\\ =& 
\lambda_i \sum_{k=1}^{n}\sum_{l=1}^{n}\frac{\partial \phi(v_{i,j})}{\partial \phi(x_k)}\frac{\partial \phi(x_k)}{\partial x_l}y_l = 
\lambda_i \sum_{l=1}^{n}\frac{\partial \phi(v_{i,j})}{\partial x_l}y_l.
\end{align*}
for all $1 \leq j \leq c_i$. Given that $\widetilde{\phi}$ is a graded automorphism, the vectors 
\[
\widetilde{\phi}(v_{i,1}), \cdots, \widetilde{\phi}(v_{i,c_i}), \displaystyle{\widetilde{\phi}(\sum_{k=1}^{n}\frac{\partial \phi(v_{i,j})}{\partial x_k}y_k}), \cdots, \displaystyle{\widetilde{\phi}(\sum_{k=1}^{n}\frac{\partial \phi(v_{c_i})}{\partial x_k}y_k})
\]
are linearly independent in $U(P)_1$. Since $\displaystyle{\sum_{i=1}^{m}c_i} = n$, $\displaystyle{\sum_{i=1}^{m}2c_i} = 2n$. In light of the fact that $\displaystyle{\sum_{i=1}^{m}c_i} = n$, it follows that $\displaystyle{\sum_{i=1}^{m}2c_i} = 2n$. This assertation implies that the set
\[
\left\{\widetilde{\phi}(v_{i,1}), \cdots, \widetilde{\phi}(v_{i,c_i}), \displaystyle{\widetilde{\phi}(\sum_{k=1}^{n}\frac{\partial \phi(v_{i,j})}{\partial x_k}y_k}), \cdots, \displaystyle{\widetilde{\phi}(\sum_{k=1}^{n}\frac{\partial \phi(v_{c_i})}{\partial x_k}y_k})\right\}_{1 \leq i \leq m}
\]
forms an eigenbasis for $U(P)_1$. In particular, the multiplicity of $\lambda_i$ in $U(P)_1$ equals to twice of multiplicity of $\lambda_i$ in $P_1$.
\end{proof}

\

\section{A Variant of The Shephard-Todd-Chevalley Theorem}
In this section, we prove a variant of the Shephard-Todd-Chevalley Theorem for unimodular quadratic Poisson structures on $\Bbbk[x_1, x_2, x_3]$, stated as the follows:
\begin{thm}
Let $P = \Bbbk[x_1, x_2, x_3]$ be a unimodular quadratic Poisson algebra and let $G \subseteq \text{PAut}_{\text{gr}}(P)$ be a finite subgroup. Then the invariant subalgebra $P^G$ is isomorphic to $P$ as Poisson algebras if and only if $P$ is trivial. 
\end{thm}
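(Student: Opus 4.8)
The plan is to read the biconditional as a graded-rigidity statement: ``$P$ is trivial'' means the Poisson bracket vanishes identically (equivalently $\Omega = 0$), and the substantive content is that a nontrivial unimodular quadratic $P$ admits no nontrivial finite $G$ with $P^G \cong P$ as Poisson algebras. The backward direction is then immediate: if $P$ is trivial, take any nontrivial reflection group, say $G = \langle \mathrm{diag}(1,1,-1) \rangle \subseteq \mathrm{PAut}_{\mathrm{gr}}(P)$; by the classical Shephard--Todd--Chevalley Theorem $P^G \cong P$ as graded algebras, and since both brackets are zero this is a Poisson isomorphism. Everything below concerns the forward direction.

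First I would reduce to groups generated by Poisson reflections. A graded Poisson automorphism is determined by its restriction to $P_1$, so $G$ embeds in $\mathrm{GL}(P_1)$ and $P^G$ agrees, as a graded algebra, with the classical invariant ring of this linear group. If $P^G \cong P$ as Poisson algebras then in particular $P^G \cong P$ as graded algebras, whence $G$ is generated by linear reflections of $P_1$ by classical Shephard--Todd--Chevalley; but each such generator lies in $G$ and is therefore a finite-order graded Poisson automorphism with reflection eigenvalues, i.e. a Poisson reflection. Consequently it suffices to prove that for nontrivial $P$ and any nontrivial $G$ generated by Poisson reflections one has $P^G \not\cong P$. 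In particular this already handles every $\Omega$ admitting no Poisson reflection, since then no nontrivial reflection-generated $G$ exists at all.

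The remaining argument is a case analysis over the nine normal forms of $\Omega$ in \cite{BW}. For each $\Omega$ I would classify the Poisson reflections using equation (2.1), or equivalently the transformation law $\Omega \circ \phi = (\det \phi\big\vert_{P_1})\,\Omega$ characterizing the Poisson automorphisms of a Jacobian structure; this pins down, in suitable coordinates, exactly which reflections $\mathrm{diag}(1,1,\xi)$ preserve the bracket and hence which groups $G$ can occur. I would then compute $P^G = \Bbbk[f_1,f_2,f_3]$ (again a polynomial ring by Shephard--Todd--Chevalley) together with the bracket induced on its generators, which is well defined since $G$ acts by Poisson automorphisms. In the generic situation the induced structure on $P^G$ is no longer unimodular, so Lemma 2.1 immediately yields $P^G \not\cong P$; per the discussion in the introduction, this disposes of all but two of the invariant subalgebras.

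The main obstacle is exactly those two exceptional cases, where unimodularity survives and Lemma 2.1 is powerless. For them I would compute the induced bracket explicitly and verify that, in the generators $f_1,f_2,f_3$, it fits the template of Lemma 2.2 — one bracket of the form $f(f_1,f_2) - r f_3$ with $r \neq 0$ and the other two equal to scalar multiples of single monomials — so that $\dim_\Bbbk P^G/(\{P^G,P^G\}) \leq 2 < 3 = \dim_\Bbbk P/(\{P,P\})$ and the two Poisson algebras cannot be isomorphic. Beyond these two cases, the genuine labor is bookkeeping: solving the constraints from equation (2.1) to enumerate the true Poisson reflections (not merely the linear reflections) of each $\Omega$, and then exhibiting explicit generators of each invariant ring and the precise form of their induced brackets so that either Lemma 2.1 or Lemma 2.2 applies.
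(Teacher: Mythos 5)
Your proposal follows essentially the same route as the paper: reduce via the classical Shephard--Todd--Chevalley Theorem to groups generated by Poisson reflections, classify those reflections case by case over the nine Jacobian normal forms using equation (2.1), compute each invariant ring and its induced bracket, and then rule out a Poisson isomorphism with Lemma 2.1 where unimodularity is lost and with Lemma 2.2 in the exceptional cases where it survives. One caveat: the statement's ``$P$ is trivial'' is evidently a typo for ``$G$ is trivial'' (compare the paper's proof, which concludes ``the group $G$ is necessarily trivial,'' and Question 5.1); under the intended reading the backward direction is immediate from $P^{\{e\}} = P$, whereas your backward direction silently replaces the given $G$ by a freshly chosen reflection group and so does not address the biconditional as quantified. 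Your forward-direction argument is exactly the substantive content of the paper's proof.
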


\smallskip

Prior to embarking on the proof of Theorem 3.1, we present a case-by-case study of all the quadratic unimodular Poisson structures on $\Bbbk[x_1, x_2, x_3]$.

\smallskip

\subsection{\texorpdfstring{$\Omega_1 = x_1^3, \quad \{x_1, x_2\} = 0, \quad \{x_2, x_3\} = 3x_1^2, \quad \{x_3, x_1\} = 0$}{Case 1}}

\begin{mylem}
\hspace{.1cm}
\begin{center}
\renewcommand{\arraystretch}{1.25}
\begin{tabular}{ |p{8cm}|p{8cm}|} 
\hline
\centering ${\textbf{PAut}_{{\textbf{gr}}}\boldsymbol{(P)}}$ &
\centering $\textbf{PR(}\boldsymbol{P}\textbf{)}$ \tabularnewline
\hline
\hline
\centering $\left\lbrace
\begin{bmatrix}
\pm \sqrt{bf-ce} & 0 & 0\\
a & b & c\\
d & e & f
\end{bmatrix}: bf \neq ce \right\rbrace$ &
\centering $\left\{
\begin{bmatrix}-1 & 0 & 0\\a & 1 & 0\\d & 0 & 1\end{bmatrix}
\right\}$
\tabularnewline
\hline
\end{tabular}
\end{center}
\end{mylem}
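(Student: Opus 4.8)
The plan is to read off both columns of the table directly from the defining bracket relations, via the matrix description of graded Poisson automorphisms. Writing $\phi = [a_{ij}]$ with $\phi(x_i) = \sum_k a_{ik}x_k$, I would first record that the only nonvanishing structure bracket is $\{x_2, x_3\} = 3x_1^2$, so that for every pair $(i,j)$ bilinearity collapses equation (2.1) to $\{\phi(x_i), \phi(x_j)\} = 3(a_{i2}a_{j3} - a_{i3}a_{j2})\,x_1^2$. Matching this against $\phi(\{x_i, x_j\})$ for the three pairs will produce the automorphism conditions.

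The decisive step is the pair $(2,3)$. Here $\phi(\{x_2, x_3\}) = 3\phi(x_1)^2$, so the requirement $3\phi(x_1)^2 = 3(a_{22}a_{33} - a_{23}a_{32})\,x_1^2$ forces $\phi(x_1)^2$ to be a scalar multiple of $x_1^2$. Since $P$ is a domain this compels $\phi(x_1) = a_{11}x_1$, i.e. $a_{12} = a_{13} = 0$, together with the scalar constraint $a_{11}^2 = a_{22}a_{33} - a_{23}a_{32}$. Once $a_{12} = a_{13} = 0$ is in hand, the two remaining pairs give $\{\phi(x_1),\phi(x_2)\} = 0 = \phi(\{x_1,x_2\})$ and likewise for $\{x_3,x_1\}$ automatically, so no further conditions arise. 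Relabeling $a_{22},a_{23},a_{32},a_{33}$ as $b,c,e,f$ and $a_{21},a_{31}$ as $a,d$, the constraint becomes $a_{11} = \pm\sqrt{bf-ce}$; since $\det\phi = a_{11}(bf-ce)$, invertibility is equivalent to $bf \neq ce$, yielding exactly the first column.

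For the second column I would exploit the block lower-triangular shape forced above: the characteristic polynomial of $\phi\vert_{P_1}$ is $(t - a_{11})$ times that of the $2\times 2$ block $B = \begin{bmatrix} b & c \\ e & f \end{bmatrix}$, whose eigenvalues $\mu_1,\mu_2$ satisfy $\mu_1\mu_2 = \det B = bf - ce = a_{11}^2$. A Poisson reflection is finite order, hence $\phi\vert_{P_1}$ is diagonalizable, with eigenvalue multiset $\{1,1,\xi\}$ for some root of unity $\xi \neq 1$. I would then split into cases: if the distinguished eigenvalue is $a_{11} = \xi$, then $\mu_1 = \mu_2 = 1$ and the determinant relation gives $\xi^2 = 1$, forcing $\xi = -1$, while diagonalizability of the eigenvalue $1$ (geometric multiplicity two) forces $B = I$, that is $b = f = 1$ and $c = e = 0$; if instead $a_{11} = 1$, then $\{\mu_1,\mu_2\} = \{1,\xi\}$ combined with $\mu_1\mu_2 = 1$ forces $\xi = 1$, a contradiction. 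Hence the only reflections are the order-two maps with $a_{11} = -1$, $b = f = 1$, $c = e = 0$, and $a, d$ free, which is the second column.

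I expect the main obstacle to lie in the reflection bookkeeping: keeping the determinant constraint $\mu_1\mu_2 = a_{11}^2$ coupled to the prescribed eigenvalue multiset, and using finite order to upgrade ``$\phi\vert_{P_1}$ has eigenvalues $1,1,-1$'' to the precise matrix form by invoking diagonalizability to conclude $B = I$. The automorphism computation itself should be routine once the key reduction $\phi(x_1) = a_{11}x_1$ is extracted.
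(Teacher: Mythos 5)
Your proposal is correct and takes essentially the same route as the paper: the pair $(2,3)$ forces $3\phi(x_1)^2$ to be a multiple of $x_1^2$, giving $a_{12}=a_{13}=0$ and $a_{11}^2=bf-ce$, and the eigenvalue relation $\mu_1\mu_2=a_{11}^2$ eliminates every reflection candidate except $\xi=-1$ with the lower block trivial. The only (cosmetic) difference is in the last step, where you invoke diagonalizability of the finite-order matrix to force $B=I$ via the rank of $\phi\big\vert_{P_1}-I$, while the paper instead computes the entries of $(\phi\big\vert_{P_1})^n$ explicitly to kill $c$ and $e$ and force $b=1$; both are valid.
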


\begin{proof}
Let $\phi \in \text{PAut}_{\text{gr}}(P)$. Apply (2.1), we have the following system of equations, with redundant equations omitted:
\begin{enumerate}[label = (\arabic*)]
    \item $a_{11}^2 = a_{22}a_{33}-a_{23}a_{32}.$
    
    \item $a_{12}^2 = 0$.
    
    \item $a_{13}^2 = 0$.
\end{enumerate}

These relations simplify to $a_{11} = \pm \sqrt{a_{22}a_{33}-a_{23}a_{32}} \neq 0$, $a_{12} = a_{13} = 0$. In conclusion,
\[
\text{PAut}_{\text{gr}}(P) = \left\lbrace
\begin{bmatrix}
\pm \sqrt{bf-ce} & 0 & 0\\
a & b & c\\
d & e & f
\end{bmatrix}: bf \neq ce \right\rbrace.
\]

\smallskip

Again, let $\phi \in \text{PAut}_{\text{gr}}(P)$. Its eigenvalues are $\lambda_1 = \pm \sqrt{bf-ce}$, $\lambda_2, \lambda_3 = \displaystyle{\frac{b+f\pm\sqrt{(b-f)^2+4ce}}{2}}$. Notice that $\lambda_2\lambda_3 = \lambda_1^2$. If $\phi$ is a Poisson reflection, $\{\lambda_1, \lambda_2, \lambda_3\} = \{1,1,\xi\}$ for some primitive root of unity $\xi$. If $\lambda_1 = 1$, then $\lambda_2\lambda_3 = 1$, contradicting to $\{\lambda_2, \lambda_3\} = \{1, \xi\}$. If $\lambda_1 = \xi$, then $\lambda_2\lambda_3 = \xi^2$, contradicting to $\{\lambda_2, \lambda_3\} = \{1,1\}$ unless $\xi = -1$. In that case, $\lambda_1 = \pm \sqrt{bf-ce} = -1$, $\lambda_2 + \lambda_3 = b + f = 2$, and $\phi\big\vert_{P_1}$ takes the form
$\begin{brsm}-1 & 0 & 0\\a & b & c\\d & e & 2-b\end{brsm}$ subject to the constraint $b(2-b) - ce = 1$. Upon computation, it is found that the (2,3)-entry of the matrix, when raised to the $n$th power, is $nc$. If $\phi$ has finite order, then $c = 0$. Given that $b(2-b) = 1$, it follows that $b = 1$ and $\phi\big\vert_{P_1}$ takes a simpler form 
$\begin{brsm}-1 & 0 & 0\\a & 1 & 0\\d & e & 1\end{brsm}$. Again, upon computation, it is found that the (3,2)-entry of the matrix, when raised to the $n$th power, is $ne$. If $\phi$ has finite order, then $e = 0$. Upon substituting this value, the resulting matrix has finite order 2. In conclusion,
\[
\text{PR}(P) = \left\{
\begin{bmatrix}-1 & 0 & 0\\a & 1 & 0\\d & 0 & 1\end{bmatrix}
\right\}.
\]
\end{proof}

\smallskip

\begin{mylem}
If $G$ is a finite Poisson reflection group of $P$, then the invariant subalgebra $P^G$ is not isomorphic to $P$ as Poisson algebras. 
\end{mylem}
\begin{proof}
If $G$ contains two distinct Poisson reflections: 
$\phi_1\big\vert_{P_1} = \begin{brsm}-1 & 0 & 0\\a_1 & 1 & 0\\d_1 & 0 & 1\end{brsm}$, $\phi_2\big\vert_{P_1} = \begin{brsm}-1 & 0 & 0\\a_2 & 1 & 0\\d_2 & 0 & 1\end{brsm}$,
then the product $(\phi_1\phi_2)\big\vert_{P_1} = \begin{brsm}1 & 0 & 0\\a_2-a_1 & 1 & 0\\d_2-d_1 & 0 & 1\end{brsm}$, a matrix of infinite order unless $a_2 = a_1$ and $d_2 = d_1$. Consequently, we may assume that $G$ is cyclic generated: $G = \begin{brsm}-1 & 0 & 0\\a & 1 & 0\\d & 0 & 1\end{brsm} \cong \mathbb{Z}_2$. To compute the invariant subalgebra, we start by observing that the polynomials $y_1 = x_1^2$, $y_2 = \displaystyle{\frac{a}{2}}x_1 + x_2$, $y_3 = \displaystyle{\frac{d}{2}}x_1+x_3$ are algebraically independent and remain invariant under the action of $G$. Embed the $\Bbbk$-algebra generated by $y_1, y_2, y_3$ into the invariant subalgebra $P^{G}$. Utilizing Molien's Theorem to compute $h_{P^{G}}(t)$, we compare the Hilbert series of these two $\Bbbk$-algebras, and conclude that $P^G = \Bbbk[y_1, y_2, y_3]$. The Poisson structure on the invariant subalgebra $P^G$ is:
\[
\{y_1, y_2\} = 0, \quad \{y_2, y_3\} = 3y_1, \quad \{y_3, y_1\} = 0.
\]
By invoking Lemma 2.1, it becomes evident that the Poisson algebras $P$ and $P^G$ are non-isomorphic Poisson algebras.
\end{proof}

\smallskip
\subsection{\texorpdfstring{$\Omega_2 = x_1^2x_2, \quad \{x_1, x_2\} = 0, \quad \{x_2, x_3\} = 2x_1x_2, \quad \{x_3, x_1\} = x_1^2$}{Case 2}}

\begin{mylem}
\hspace{.1cm}
\begin{center}
\renewcommand{\arraystretch}{1.25}
\begin{tabular}{ |p{8cm}|p{8cm}|} 
\hline
\centering ${\textbf{PAut}_{{\textbf{gr}}}\boldsymbol{(P)}}$ &
\centering $\textbf{PR(}\boldsymbol{P}\textbf{)}$ \tabularnewline
\hline
\hline
\centering $\left\lbrace
\begin{bmatrix}
a & 0 & 0\\
0 & b & 0\\
c & d & a
\end{bmatrix}: a, b \neq 0\right\rbrace$ &
\centering $\left\{
\begin{bmatrix}1 & 0 & 0\\0 & \xi & 0\\0 & d & 1\end{bmatrix}
\right\}$
\tabularnewline
\hline
\end{tabular}
\end{center}
\end{mylem}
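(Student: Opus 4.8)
The plan is to follow the template of subsection 3.1: first determine $\text{PAut}_{\text{gr}}(P)$ by substituting a general matrix $\phi = [a_{ij}]$ (with $\phi(x_i) = \sum_k a_{ik}x_k$) into the identity (2.1) for each of the three defining brackets $\{x_1,x_2\} = 0$, $\{x_1,x_3\} = -x_1^2$, and $\{x_2,x_3\} = 2x_1x_2$, then read off $\text{PR}(P)$ from the eigenvalues together with a finite-order analysis. For the pair $(1,3)$ the left-hand side is $\phi(-x_1^2) = -\phi(x_1)^2$, which expands into all six quadratic monomials, whereas the right-hand side involves only $x_1^2$ and $x_1x_2$; comparing the coefficients of $x_2^2$ and $x_3^2$ forces $a_{12} = a_{13} = 0$, and the coefficient of $x_1^2$ then gives $a_{11}^2 = a_{11}a_{33}$, hence $a_{11} = a_{33}$ since $\phi$ is invertible. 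Substituting $a_{12} = a_{13} = 0$ into the pair $(1,2)$ identity (whose left side is $0$) yields $a_{11}a_{23} = 0$, so $a_{23} = 0$; the pair $(2,3)$ identity then reduces, after comparing the $x_1^2$ coefficient and using $a_{33} = a_{11}$, to $3a_{11}a_{21} = 0$, hence $a_{21} = 0$. Writing $a := a_{11} = a_{33}$, $b := a_{22}$, $c := a_{31}$, $d := a_{32}$ and noting that invertibility forces $a, b \neq 0$, this gives precisely the claimed form of $\text{PAut}_{\text{gr}}(P)$.

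Next I would extract $\text{PR}(P)$. The characteristic polynomial of a general element factors as $(a - \lambda)^2(b - \lambda)$, so $\phi\big\vert_{P_1}$ has eigenvalue $a$ with multiplicity $2$ and eigenvalue $b$ with multiplicity $1$. A Poisson reflection must have eigenvalue multiset $\{1, 1, \xi\}$ with $\xi$ a primitive root of unity; since $a$ occurs twice and exactly two eigenvalues must equal $1$, we are forced into $a = 1$ and $b = \xi \neq 1$ (the alternative $a = \xi \neq 1$ cannot supply two eigenvalues equal to $1$). Thus a candidate Poisson reflection has the shape $\begin{brsm}1 & 0 & 0\\0 & \xi & 0\\c & d & 1\end{brsm}$ with $\xi$ a nontrivial root of unity.

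Finally I would impose finite order by computing the $n$-th power of this matrix: its $(3,1)$-entry equals $nc$, so finite order forces $c = 0$, while its $(3,2)$-entry equals $d(1 + \xi + \cdots + \xi^{n-1}) = d\frac{\xi^n - 1}{\xi - 1}$, which vanishes automatically as soon as $\xi^n = 1$, so $d$ is left unconstrained; this recovers $\text{PR}(P)$. The main obstacle I anticipate is precisely this finite-order step and the asymmetry it produces between $c$ and $d$: one must track the matrix powers carefully enough to see that the column parameter $c$ is killed while the row parameter $d$ survives, in contrast to subsection 3.1 where the power computation eliminated both lower off-diagonal entries. The coefficient comparisons of the first step and the eigenvalue bookkeeping of the second are routine by comparison.
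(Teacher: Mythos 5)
Your proposal is correct and follows essentially the same route as the paper: apply identity (2.1) bracket by bracket to force $a_{12}=a_{13}=a_{23}=a_{21}=0$ and $a_{33}=a_{11}$, then read off the eigenvalues $a,a,b$ to pin down $a=1$, $b=\xi$, and finally use the $n$-th power computation to kill $c$ while leaving $d$ free. The paper's proof is the same computation with the coefficient equations listed up front rather than organized by bracket, so there is nothing substantive to add.
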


\begin{proof}
Let $\phi \in \text{PAut}_{\text{gr}}(P)$. Apply (2.1), we have the following system of equations, with redundant equations omitted:
\begin{enumerate}[label = (\arabic*)]
    \item $a_{13}a_{21} = a_{11}a_{23}.$
        
    \item ${2a_{11}a_{21} = a_{23}a_{31}-a_{21}a_{33}}$. 
    
    \item ${a_{11}^2 = a_{11}a_{33}-a_{13}a_{31}}.$
    
    \item ${a_{12}^2 = 0}.$
    
    \item ${a_{13}^2 = 0}.$
\end{enumerate}

By (4) and (5), $a_{12} = a_{13} = 0$ and $a_{11} \neq 0$. By substituting the variables in (1) and (3), $a_{23} = 0$ and $a_{33} = a_{11}$. Finally, it can be deduced from (2) that $a_{21} = 0$. In conclusion,
\[
\text{PAut}_{\text{gr}}(P) = \left\lbrace
\begin{bmatrix}
a & 0 & 0\\
0 & b & 0\\
c & d & a
\end{bmatrix}: a, b \neq 0\right\rbrace.
\]

\smallskip

Again, let $\phi \in \text{PAut}_{\text{gr}}(P)$. Its eigenvalues are $a, a, b$. If $\phi$ is a Poisson reflection, $a = 1$, $b = \xi$ for some primitive root of unity $\xi$, and $\phi\big\vert_{P_1}$ takes the form $\begin{brsm}1 & 0 & 0\\0 & \xi & 0\\c & d & 1\end{brsm}$. When raised to the $n$th power, the (3,1)-entry of the matrix $(\phi\big\vert_{P_1})^n$ is equal to $nc$, implying that $c = 0$. In the meantime, the (2,2)-entry of the matrix is equal to $\xi^n$ and the (3,2)-entry of the matrix is equal to $\displaystyle{d\frac{(\xi^n-1)}{\xi-1}}$. If $n$ is a multiple of the order of $\xi$, the matrix $(\phi\big\vert_{P_1})^n$ is equal to the identity matrix. In conclusion,
\[
\text{PR}(P) = \left\{
\begin{bmatrix}1 & 0 & 0\\0 & \xi & 0\\0 & d & 1\end{bmatrix}
\right\}.
\]
\end{proof}

\smallskip

\begin{mylem}
If $G$ is a finite Poisson reflection group of $P$, then the invariant subalgebra $P^G$ is not isomorphic to $P$ as Poisson algebras. 
\end{mylem}

\begin{proof}
If $G$ contains two non-commuting Poisson reflections: $\phi_1\big\vert_{P_1} = \begin{brsm}1 & 0 & 0\\0 & \xi_{n_1} & 0\\0 & d_1 & 1\end{brsm}$ and $\phi_2\big\vert_{P_1} = \begin{brsm}1 & 0 & 0\\0 & \xi_{n_2} & 0\\0 & d_2 & 1\end{brsm}$, where $\xi_{n_1}$ (resp. $\xi_{n_2}$) is a primitve $n_1$th (resp. $n_2$th) root of unity. Since $\phi_1\phi_2 \neq \phi_2\phi_1$, the product $(\phi_1\phi_2\phi_1^{n_1-1}\phi_2^{n_2-1})\big\vert_{P_1} = \begin{brsm}1 & 0 & 0\\0 & 1 & 0\\0 & d & 1\end{brsm}$ for some $d \neq 0$; however, such a matrix has infinite order, a contradiction. Therefore, $G$ is a finite abelian group. Keeping $\phi_1$ and $\phi_2$ as above, the commutativity $\phi_1\phi_2 = \phi_2\phi_1$ is equivalent to $d_2 = \displaystyle{\frac{\xi_{n_2}-1}{\xi_{n_1}-1}d_1}$. 

Decompose $G \cong \mathbb{Z}_{n_1} \times \cdots \times \mathbb{Z}_{n_m}$ for some $n_i \in \mathbb{N}$ satisfying $n_i | n_{i+1}$. According to the commutativity condition, we may take the cyclic generator of $\mathbb{Z}_{n_i}$ to be $\phi_i\big\vert_{P_1} = \begin{brsm}1 & 0 & 0\\0 & \xi_{n_i} & 0\\0 & \frac{\xi_{n_i}-1}{\xi_{n_1}-1}d_1 & 1\end{brsm}$ for some primitive $n_i$th root of unity $\xi_{n_i}$, for all $1 \leq i \leq m$. Define $S = \{\alpha \in \mathbb{N}^m: 1 \leq \alpha_i \leq n_i\}$. For $\alpha = (\alpha_i)_{1 \leq i \leq m} \in S$, we define $\xi^{\alpha} = \xi_{n_1}^{\alpha_1} \cdots \xi_{n_m}^{\alpha_m}$. According to the Molien's Theorem,
\[
h_{P^G}(t) = \frac{1}{n_1 \cdots n_m}\sum_{\alpha \in S}\frac{1}{(1-t)^2(1 - \xi^{\alpha}t)} = \frac{1}{(1-t)^2}\left(\frac{1}{n_1 \cdots n_m}\sum_{\alpha \in S}\frac{1}{(1 - \xi^{\alpha}t)}\right).
\]

Before computing $\displaystyle{\sum_{\alpha \in S}\frac{1}{(1 - \xi^{\alpha}t)}}$, we claim that $\{\xi^{\alpha}: \alpha \in S\}$ is precisely $n_1 \cdots n_{m-1}$ copies of $\{\xi_{n_m}^{\alpha_m}: 1 \leq \alpha_m \leq n_m\}$. To prove the claim, we proceed by induction. When $m = 2$, the set $\{\xi^{\alpha}: \alpha \in S\}$ contains the following elements:
\begin{alignat*}{8}
1 & \hspace{.2in} & \xi_{n_2} & \hspace{.2in} & \xi_{n_2}^{2} & \hspace{.2in} & \cdots & \hspace{.2in} & \xi_{n_2}^{n_2-1}\\
1 & \hspace{.2in} & \xi_{n_1}\xi_{n_2} & \hspace{.2in} & \xi_{n_1}\xi_{n_2}^{2} & \hspace{.2in} & \cdots & \hspace{.2in} & \xi_{n_1}\xi_{n_2}^{n_2-1}\\
\vdots & \hspace{.2in} & \vdots & \hspace{.2in} & \vdots & \hspace{.2in} & \ddots & \hspace{.2in} & \vdots\\
1 & \hspace{.2in} & \xi_{n_1}^{n_1-1}\xi_{n_2} & \hspace{.2in} & \xi_{n_1}^{n_1-1}\xi_{n_2}^{2} & \hspace{.2in} & \cdots & \hspace{.2in} & \xi_{n_1}^{n_1-1}\xi_{n_2}^{n_2-1}
\end{alignat*}
Each line can be realized as the image of a left multiplication map $l_{\xi_{n_1}^{\alpha_1}}: \langle \xi_{n_2} \rangle \to \langle \xi_{n_2} \rangle$, for all $0 \leq \alpha_1 \leq n_1-1$. Since $n_1 | n_2$, and consequently, $\xi_{n_1}^{\alpha_1}$ is an element of $\langle \xi_{n_2} \rangle$, each line is a permutation of the first line. This proves the base case: $\{\xi_{n_1}^{\alpha_1}\xi_{n_2}^{\alpha_2}| 1 \leq \alpha_1 \leq n_1, 1 \leq \alpha_2 \leq n_2\}$ is precisely $n_1$ copies of $\{\xi_{n_2}^{\alpha_2}| 1 \leq \alpha_2 \leq n_2\}$. Inductively, when $m$ is arbitrary, the set $\{\xi^{\alpha} | \alpha \in S\}$ contains the following elements:
\begin{alignat*}{8}
1 & \hspace{.2in} & E_{m-1} & \hspace{.2in} & E_{m-1}\xi_{n_m} & \hspace{.2in} & \cdots & \hspace{.2in} & E_{m-1}\xi_{n_m}^{n_m-1},
\end{alignat*}
where $E_{m-1} = \{\xi_{n_1}^{\alpha_1} \cdots \xi_{n_{m-1}}^{\alpha_{n_{m-1}}}: 1 \leq \alpha_i \leq n_i\}$. By the induction hypothesis, the set $\{\xi_{n_1}^{\alpha_1} \cdots \xi_{n_{m-1}}^{\alpha_{n_{m-1}}}: 1 \leq \alpha_i \leq n_i\}$ is $n_1n_2 \cdots n_{m-2}$ copies of $\{\xi_{n_{m-1}}^{\alpha_{m-1}} : 1 \leq \alpha_{m-1} \leq n_{m-1}\}$. Consequently, we can view the preceding lines of elements as $n_1n_2 \cdots n_{m-2}$ layers of the subsequent lines of elements:
\begingroup
\allowdisplaybreaks
\begin{alignat*}{8}
    1 & \hspace{.2in} & \xi_{n_{m}} & \hspace{.2in} & \xi_{n_m}^{2} & \hspace{.2in} & \cdots & \hspace{.2in} & \xi_{n_m}^{n_m-1}\\
    1 & \hspace{.2in} & \xi_{n_{m-1}}\xi_{n_m} & \hspace{.2in} & \xi_{n_{m-1}}\xi_{n_m}^{2} & \hspace{.2in} & \cdots & \hspace{.2in} & \xi_{n_{m-1}}\xi_{n_m}^{n_m-1}\\
    \vdots & \hspace{.2in} & \vdots & \hspace{.2in} & \vdots & \hspace{.2in} & \ddots & \hspace{.2in} & \vdots\\
    1 & \hspace{.2in} & \xi_{n_{m-1}}^{n_{m-1}-1}\xi_{n_m} & \hspace{.2in} & \xi_{n_{m-1}}^{n_{m-1}-1}\xi_{n_m}^{2} & \hspace{.2in} & \cdots & \hspace{.2in} & \xi_{n_{m-1}}^{n_{m-1}-1}\xi_{n_m}^{n_m-1}
\end{alignat*}
\endgroup

Again, from the induction hypothesis, the above lines of elements are $n_1n_2 \cdots n_{m-1}$ copies $\{\xi_{n_m}^{\alpha_m} | 1 \leq \alpha_m \leq n_m\}$ as claimed. We can compute the Hilbert series of the invariant subalgebra $P^G$ by employing the claim:

\begin{align*}
    h_{P^{G}}(t) =& \frac{1}{(1-t)^2}\bigg(\frac{n_1 \cdots n_{m-1}}{n_1 \cdots n_m}\sum_{1 \leq \alpha_m \leq n_m}{\frac{1}{(1 - \xi_{n_m}^{\alpha_m}t)}}\bigg)\\
    =& \frac{1}{(1-t)^2}\bigg(\frac{1}{n_m}\sum_{1 \leq \alpha_m \leq n_m}{\frac{1}{(1 - \xi_{n_m}^{\alpha_m}t)}}\bigg)\\
    =& \frac{1}{(1-t)^2(1-t^{n_m})}.
\end{align*}

Set $l = n_m$. Furthermore, set $y_1 = x_1$, $y_2 = d_1x_2 + (1-\xi_{n_1})x_3$, $y_3 = x_2^l$. The elements $y_1, y_2, y_3$ are three algebraically independent polynomials that are invariant under the action of $G$. Consequently, we may embed $\Bbbk[y_1, y_2, y_3]$ into the invariant subalgebra $P^G$. It is evident that the Hilbert series of $\Bbbk[y_1,y_2,y_3]$ is $\displaystyle{\frac{1}{(1-t)^2(1-t^l)}}$, and therefore, the embedding $\Bbbk[y_1, y_2, y_3] \hookrightarrow P^G$ is surjective because the cokernel has Hilbert series 0. Accordingly, we conclude that the invariant subalgebra $P^G = \Bbbk[y_1, y_2, y_3]$ and has the following Poisson structure:
\[
\{y_1, y_2\} = (\xi_{n_1}-1)y_1^2, \quad \{y_2,y_3\} = 2l(\xi_{n_1}-1)y_1y_3, \quad \{y_3, y_1\} = 0.
\]

Suppose that $P^G \cong P$ as Poisson algebras. Specifically, by applying (the contrapositive of) Lemma 2.1, the invariant subalgebra is unimodular. This implies that we can find a superpotential $\Omega \in \Bbbk[y_1, y_2, y_3]$ satisfying:
\[
\frac{\partial \Omega}{\partial y_3} = (\xi_{n_1}-1)y_1^2, \quad \frac{\partial \Omega}{\partial y_1} =2l(\xi_{n_1}-1)y_1y_3, \quad \frac{\partial \Omega}{\partial y_2} = 0.
\]
It is straightforward to verify no such $\Omega$ exists unless $l = 1$, a contradiction to $G$ being a Poisson reflection group. 
\end{proof}

\smallskip
\subsection{\texorpdfstring{$\Omega_3 = 2x_1x_2x_3, \quad \{x_1, x_2\} = 2x_1x_2, \quad \{x_2, x_3\} = 2x_2x_3, \quad \{x_3, x_1\} = 2x_1x_3$}{Case 3}}

\begin{mylem}
\hspace{.1cm}
\begin{center}
\renewcommand{\arraystretch}{1.25}
\begin{tabular}{ |p{8cm}|p{8cm}|} 
\hline
\centering ${\textbf{PAut}_{{\textbf{gr}}}\boldsymbol{(P)}}$ &
\centering $\textbf{PR(}\boldsymbol{P}\textbf{)}$ \tabularnewline
\hline
\hline
\centering $\left\lbrace
\begin{bmatrix}
a & 0 & 0\\
0 & b & 0\\
0 & 0 & c
\end{bmatrix}, 
\begin{bmatrix}
0 & a & 0\\
0 & 0 & b\\
c & 0 & 0
\end{bmatrix},
\begin{bmatrix}
0 & 0 & a\\
b & 0 & 0\\
0 & c & 0
\end{bmatrix}
: a, b, c \neq 0\right\rbrace$ &
\centering $\left\{
\begin{bmatrix}\xi & 0 & 0\\0 & 1 & 0\\0 & 0 & 1\end{bmatrix},
\begin{bmatrix}1 & 0 & 0\\0 & \xi & 0\\0 & 0 & 1\end{bmatrix}, 
\begin{bmatrix}1 & 0 & 0\\0 & 1 & 0\\0 & 0 & \xi\end{bmatrix}
\right\}$
\tabularnewline
\hline
\end{tabular}
\end{center}
\end{mylem}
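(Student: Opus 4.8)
The plan is to follow the template of the preceding cases: write $\phi = [a_{ij}] \in \text{PAut}_{\text{gr}}(P)$ and apply the master equation (2.1) to each of the three pairs $(1,2)$, $(1,3)$, $(2,3)$. Recording the bracket as $\{x_i,x_j\} = c_{ij}x_ix_j$ with $c_{12}=c_{23}=2$ and $c_{13}=-2$, the left-hand side $\phi(\{x_i,x_j\}) = c_{ij}\,\phi(x_i)\phi(x_j)$ is an honest product of two linear forms and so in general carries the square monomials $x_1^2, x_2^2, x_3^2$, whereas the right-hand side $\{\phi(x_i),\phi(x_j)\} = \sum_{k<l}(a_{ik}a_{jl}-a_{il}a_{jk})\{x_k,x_l\}$ is a combination of the mixed monomials $x_kx_l$ only. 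This dichotomy is the engine of the argument.

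First I would match the coefficients of the square monomials. As these vanish on the right, each pair $(i,j)$ forces $a_{ik}a_{jk}=0$ for every column index $k$. Running over the three pairs shows that within any fixed column the three entries $a_{1k},a_{2k},a_{3k}$ are pairwise orthogonal, so each column carries at most one nonzero entry; invertibility of $\phi$ then upgrades this to exactly one nonzero entry per column, i.e. $\phi$ is a monomial (generalized permutation) matrix. I would record this as $\phi(x_i)=\lambda_i x_{\sigma(i)}$ for a permutation $\sigma \in S_3$ and scalars $\lambda_i \in \Bbbk^{\times}$.

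Next I would feed this monomial form back into (2.1). Now both sides collapse to a single monomial: $\phi(\{x_i,x_j\}) = c_{ij}\lambda_i\lambda_j\, x_{\sigma(i)}x_{\sigma(j)}$ while $\{\phi(x_i),\phi(x_j)\} = \lambda_i\lambda_j\, c_{\sigma(i)\sigma(j)}\,x_{\sigma(i)}x_{\sigma(j)}$, so cancelling the nonzero factor $\lambda_i\lambda_j$ reduces the condition to $c_{\sigma(i)\sigma(j)}=c_{ij}$ for all $i<j$; that is, $\sigma$ must preserve the cyclic orientation encoded by the antisymmetric array $c$. Checking the six permutations—or simply observing that a transposition reverses the sign of one bracket while an even permutation cyclically permutes the three brackets—shows this holds precisely when $\sigma \in A_3 = \{e,(1\,2\,3),(1\,3\,2)\}$, with the $\lambda_i$ otherwise unconstrained. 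These three permutations yield exactly the three families tabulated for $\text{PAut}_{\text{gr}}(P)$.

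For the column $\text{PR}(P)$ I would compute eigenvalues type by type. A diagonal automorphism $\mathrm{diag}(a,b,c)$ has eigenvalues $a,b,c$, so a finite-order one is a Poisson reflection exactly when two of them equal $1$ and the third is a primitive root of unity, giving the three diagonal families listed. For either cyclic type a short cofactor expansion gives characteristic polynomial $\lambda^3 = abc$, and since $abc \neq 0$ its three roots $\mu,\omega\mu,\omega^2\mu$ (with $\omega$ a primitive cube root of unity) are distinct; no eigenvalue can be repeated, so such an element can never have the reflection spectrum $1,1,\xi$. Hence $\text{PR}(P)$ consists of the diagonal reflections alone. The only real labor is the bookkeeping of the system coming from (2.1), and the square-versus-mixed monomial dichotomy is precisely what keeps that bookkeeping short; I anticipate no genuine obstacle beyond this organization.
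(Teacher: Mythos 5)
Your proposal is correct and follows essentially the same route as the paper: matching the coefficients of the square monomials in (2.1) forces $a_{ik}a_{jk}=0$ for $i\neq j$, hence a monomial matrix, and the eigenvalue analysis then yields the reflections. You are in fact slightly more careful than the paper at two points—explicitly ruling out the three transposition types via the condition $c_{\sigma(i)\sigma(j)}=c_{ij}$ (the paper passes directly from ``scalar permutation matrix'' to the three listed families), and replacing the paper's case analysis on $\lambda_1$ for the cyclic types by the cleaner observation that the roots of $\lambda^3=abc$ are distinct and so cannot realize the spectrum $1,1,\xi$.
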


\begin{proof}
Let $\phi \in \text{PAut}_{\text{gr}}(P)$. Apply (2.1): $a_{ik}a_{jk} = 0$ for all $i, j, j \in \{1,2,3\}, i \neq j$. It follows that $\phi$ may be represented as a scalar permutation matrix. In conclusion,
\[
\text{PAut}_{\text{gr}}(P) = \left\lbrace
\begin{bmatrix}
a & 0 & 0\\
0 & b & 0\\
0 & 0 & c
\end{bmatrix}, 
\begin{bmatrix}
0 & a & 0\\
0 & 0 & b\\
c & 0 & 0
\end{bmatrix},
\begin{bmatrix}
0 & 0 & a\\
b & 0 & 0\\
0 & c & 0
\end{bmatrix}
: a, b, c \neq 0\right\rbrace.
\]

\smallskip

Again, let $\phi \in \text{PAut}_{\text{gr}}(P)$. If $\phi\big\vert_{P_1} = \begin{brsm}
a & 0 & 0\\0 & b & 0\\0 & 0 & c\end{brsm}$, then $\phi$ is a Poisson reflection if and only if $\phi\big\vert_{P_1} = \begin{brsm}\xi & 0 & 0\\0 & 1 & 0\\0 & 0 & 1\end{brsm}$, $\begin{brsm}1 & 0 & 0\\0 & \xi & 0\\0 & 0 & 1\end{brsm}$, $\begin{brsm}1 & 0 & 0\\0 & 1 & 0\\0 & 0 & \xi\end{brsm}$ for some primitive root of unity $\xi$. If $\phi\big\vert_{P_1} = \begin{brsm}0 & a & 0\\0 & 0 & b\\c & 0 & 0\end{brsm}$ or $\begin{brsm}0 & 0 & a\\b & 0 & 0\\0 & c & 0\end{brsm}$, its eigenvalues $\lambda_1 = \sqrt[3]{abc}, \lambda_2 = \sqrt[3]{abc}\xi_3, \lambda_3 = \sqrt[3]{abc}{\xi_3^2}$, where $\xi_3$ is a primitive 3rd root of unity. If further $\phi$ is a Poisson reflection, then $\{\lambda_1, \lambda_2, \lambda_3\} = \{1, 1, \xi\}$ for some primitive root of unity $\xi$. If $\lambda_1 = 1$, then $\lambda_2\lambda_3 = \xi$, contradicting to $\lambda_2\lambda_3 = \lambda_1^2$. If $\lambda_1 = \xi$, then $\displaystyle{\frac{\lambda_3}{\lambda_2}} = \frac{1}{1} = 1$, contradicting to $\displaystyle{\frac{\lambda_3}{\lambda_2} = \frac{\sqrt[3]{abc}\xi_3^2}{\sqrt[3]{abc}\xi_3}} = \xi_3$. Consequently, such $\phi$ cannot be Poisson reflections. In conclusion,
\[
\text{PR}(P) = \left\{
\begin{bmatrix}\xi & 0 & 0\\0 & 1 & 0\\0 & 0 & 1\end{bmatrix},
\begin{bmatrix}1 & 0 & 0\\0 & \xi & 0\\0 & 0 & 1\end{bmatrix}, 
\begin{bmatrix}1 & 0 & 0\\0 & 1 & 0\\0 & 0 & \xi\end{bmatrix}
\right\}.
\]
\end{proof}

\smallskip

\begin{mylem}
If $G$ is a finite Poisson reflection group of $P$, then the invariant subalgebra $P^G$ is not isomorphic to $P$ as Poisson algebras. 
\end{mylem}
\begin{proof}
This case is addressed in \cite[Theorem 4.5]{GVW}. The conclusion is as follows: $P^G \cong P$ as Poisson algebras if and only if $G$ is trivial.
\end{proof}

\smallskip
\subsection{\texorpdfstring{$\Omega_4 = x_1^2x_2 + x_1x_2, \quad \{x_1, x_2\} = 0, \quad \{x_2, x_3\} = 2x_1x_2 + x_2^2, \quad \{x_3, x_1\} = x_1^2 + 2x_1x_2$}{Case 4}}

\begin{mylem}
\hspace{.1cm}
\begin{center}
\renewcommand{\arraystretch}{1.25}
\begin{tabular}{ |p{8cm}|p{8cm}|} 
\hline
\centering ${\textbf{PAut}_{{\textbf{gr}}}\boldsymbol{(P)}}$ &
\centering $\textbf{PR(}\boldsymbol{P}\textbf{)}$ \tabularnewline
\hline
\hline
\centering $\left\lbrace
\begin{bmatrix}0 & a & 0\\-a & -a & 0\\b & c & a\end{bmatrix},
\begin{bmatrix}0 & -a & 0\\-a & 0 & 0\\b & c & a\end{bmatrix},
\begin{bmatrix}-a & 0 & 0\\a & a & 0\\b & c & a\end{bmatrix},
\color{white}\right\rbrace$ &
\centering $\left\{
\begin{bmatrix}0 & -1 & 0\\-1 & 0 & 0\\b & b & 1\end{bmatrix},
\begin{bmatrix}-1 & 0 & 0\\1 & 1 & 0\\b & 0 & 1\end{bmatrix}, 
\begin{bmatrix}1 & 1 & 0\\0 & -1 & 0\\0 & c & 1\end{bmatrix}
\right\}$
\tabularnewline
\vspace{.00001cm}
\centering $\color{white}\left\rbrace\color{black}
\begin{bmatrix}-a & -a & 0\\a & 0 & 0\\b & c & a\end{bmatrix},
\begin{bmatrix}a & a & 0\\0 & -a & 0\\b & c & a\end{bmatrix}, 
\begin{bmatrix}a & 0 & 0\\0 & a & 0\\b & c & a\end{bmatrix}
: a \neq 0\right\rbrace\color{black}$ &
\tabularnewline
\hline
\end{tabular}
\end{center}
\end{mylem}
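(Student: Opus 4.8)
My plan is to follow the template of the preceding cases: parametrize $\phi \in \text{PAut}_{\text{gr}}(P)$ by its matrix $[a_{ij}]$ on $P_1$, substitute into equation (2.1) for the three generating brackets $\{x_1,x_2\}=0$, $\{x_2,x_3\} = 2x_1x_2 + x_2^2$, and $\{x_3,x_1\} = x_1^2 + 2x_1 x_2$, and compare coefficients of the quadratic monomials $x_ix_j$ to obtain a polynomial system in the $a_{ij}$. Before grinding, I would record the organizing observation that the potential factors as $\Omega_4 = x_1^2 x_2 + x_1 x_2^2 = x_1 x_2 (x_1 + x_2)$, a product of three distinct linear forms all lying in $\Bbbk x_1 \oplus \Bbbk x_2$. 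Since a graded Poisson automorphism of a unimodular (Jacobian) structure rescales the potential by a scalar (one checks directly that $\phi(\Omega_4) = (\det \phi\big\vert_{P_1})\,\Omega_4$ on the candidate solutions), $\phi$ must permute the three lines $\Bbbk x_1$, $\Bbbk x_2$, $\Bbbk(x_1+x_2)$ up to scalars. This predicts that the admissible $2\times 2$ blocks realize the $S_3$-action on these three lines, so there should be exactly $|S_3| = 6$ block shapes, and it tells me in advance which six to expect.

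The first genuine step is to pin down the third column of $\phi\big\vert_{P_1}$. Since $\{x_2,x_3\}$ and $\{x_3,x_1\}$ are polynomials in $x_1, x_2$ only, whereas applying $\phi$ to them produces $x_3^2$-terms with coefficients $a_{23}(2a_{13}+a_{23})$ and $a_{13}(a_{13}+2a_{23})$ respectively, forcing these to vanish yields $a_{13} = a_{23} = 0$. Thus $\Bbbk x_1 \oplus \Bbbk x_2$ is $\phi$-stable, $\phi$ is block lower-triangular with $2\times 2$ block $M = [a_{ij}]_{1\le i,j\le 2}$ and corner $a_{33} = a \neq 0$. Feeding $a_{13}=a_{23}=0$ back into the $(2,3)$ and $(3,1)$ relations and comparing coefficients of $x_1^2$, $x_1x_2$, $x_2^2$ gives six quadratic relations in $a_{11},a_{12},a_{21},a_{22},a$. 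I would solve these through the four factored equations of the form $a_{21}(2a_{11}+a_{21}+a)=0$, etc.: each product-is-zero splits into cases, and the two cross-term equations together with invertibility $\det M \neq 0$ should select precisely the six listed families, namely the identity-scaling block, the three transposition-type blocks, and the two three-cycle-type blocks.

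For $\text{PR}(P)$ I would exploit block lower-triangularity, so that the eigenvalues of $\phi\big\vert_{P_1}$ are those of $M$ together with $a$. Computing them: the identity-scaling block gives $a,a,a$; the two three-cycle blocks give $a,a\omega,a\omega^2$ with $\omega$ a primitive cube root of unity; and each transposition block gives $\{a,a,-a\}$. A Poisson reflection requires eigenvalues $1,1,\xi$ with the \emph{repeated} eigenvalue equal to $1$, which immediately rules out the all-equal and three-cycle families and forces $a=1$, $\xi=-1$ for the transposition families. Finally, among these $a=1$ matrices I impose finite order: since the eigenvalues are roots of unity, finite order is equivalent to diagonalizability, i.e. the eigenvalue $1$ must have geometric multiplicity $2$, the condition $\operatorname{rank}(\phi\big\vert_{P_1} - I) = 1$. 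This cuts the free bottom-row parameters down to the constraints $c=b$, $c=0$, and $b=0$ for the three transposition families respectively, producing exactly the three order-$2$ reflections in the table. The main obstacle is bookkeeping rather than conceptual: keeping the case analysis of the four factored equations exhaustive but non-redundant. The line-permutation picture is what keeps that analysis honest and confirms that no solution family is missed.
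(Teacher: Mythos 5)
Your proposal is correct and follows essentially the same route as the paper: apply equation (2.1), use the vanishing of the $x_3^2$-coefficients to force $a_{13}=a_{23}=0$, case-split the factored quadratic relations on the remaining $2\times 2$ block to obtain the six families, and then match eigenvalues $\{a,a,-a\}$ (resp.\ $\{a,a\omega,a\omega^2\}$, $\{a,a,a\}$) against $\{1,1,\xi\}$ to isolate the reflection candidates. The only genuine difference is your final step, where you impose finite order via diagonalizability, i.e.\ $\operatorname{rank}(\phi\big\vert_{P_1}-I)=1$, rather than computing explicit $n$th powers as the paper does; this is a cleaner equivalent that yields the same constraints $b=c$, $c=0$, $b=0$, and your $S_3$ line-permutation heuristic for $\Omega_4=x_1x_2(x_1+x_2)$ is a useful sanity check even though it is not load-bearing.
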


\begin{proof}
Let $\phi \in \text{PAut}_{\text{gr}}(P)$. Apply (2.1), we have the following system of equations, with redundant equations omitted:
\begin{enumerate}[label = (\arabic*)]
    
    \item $2a_{11}a_{21}+a_{21}^2 = a_{23}a_{31}-a_{21}a_{33}$.

    \item $2a_{12}a_{22}+a_{22}^2 = a_{22}a_{33}-a_{23}a_{32}$.
    
    \item $2a_{13}a_{23}+a_{23}^2 = 0$.
    
    \item $a_{11}a_{22}+a_{12}a_{21}+a_{21}a_{22} = a_{22}a_{33} - a_{23}a_{32} + a_{23}a_{31} - a_{21}a_{33}$.
    
    \item $a_{11}^2+2a_{11}a_{21} = a_{11}a_{33}-a_{13}a_{31}$.
    
    \item $a_{12}^2+2a_{12}a_{22} = a_{13}a_{32}-a_{12}a_{33}$.
    
    \item $a_{13}^2+2a_{13}a_{23} = 0$.
\end{enumerate}

It can be deduced from (3) and (7) that $a_{13} = a_{23} = 0$ and $a_{33} \neq 0$. Next, we will proceed with a case-by-case discussion.
\begin{itemize}
    \item Suppose that $a_{21} \neq 0$ and $a_{11} = 0$. From invertibility, $a_{12} \neq 0$. From (1), $a_{21} = -a_{33}$. If $a_{22} \neq 0$, the combination of (2) and (6) leads to $a_{12} = -a_{22} = a_{33}$ and the remaining equations are nullified. If $a_{22} = 0$, (4) states that $a_{12} = -a_{33}$ and the remaining equations are nullified. Consequently, we have two possible forms for $\phi$: 
    $\begin{brsm}0 & a & 0\\-a & -a & 0\\b & c & a\end{brsm}$, $\begin{brsm}0 & -a & 0\\-a & 0 & 0\\b & c & a\end{brsm}$, for some $a \neq 0$.

    \item Suppose that $a_{21} \neq 0$ and $a_{11} \neq 0$. First, a combination of (1) and (5) leads to $a_{11} = -a_{21} = -a_{33}$. If $a_{22} \neq 0$, a combination of (2) and (4) implies $a_{12} = 0$ and $a_{22} = a_{33}$ and the remaining equations are nullified. If $a_{22} = 0$, (4) states that $a_{12} = -a_{33}$ and the remaining equations are nullified. Consequently, we have two possible forms for $\phi$: 
    $\begin{brsm}-a & 0 & 0\\a & a & 0\\b & c & a\end{brsm}$, $\begin{brsm}-a & -a & 0\\a & 0 & 0\\b & c & a\end{brsm}$, for some $a \neq 0$.

    \item Suppose that $a_{21} = 0$. From invertibility, $a_{22} \neq 0$. If $a_{12} \neq 0$, a combination of (2) and (6) results in $a_{12} = -a_{22} = a_{33}$, (4) results in $a_{11} = a_{33}$, and the remaining equations are nullified. If $a_{12} = 0$, (2) and (4) imply $a_{11} = a_{22} = a_{33}$ and the remaining equations are nullified. Consequently, we have two possible forms for $\phi$: 
    $\begin{brsm}a & a & 0\\0 & -a & 0\\b & c & a\end{brsm}$, $\begin{brsm}a & 0 & 0\\0 & a & 0\\b & c & a\end{brsm}$, for some $a \neq 0$.
\end{itemize}

In conclusion,
\[
\text{PAut}_{\text{gr}}(P) = \left\lbrace
\begin{bmatrix}0 & a & 0\\-a & -a & 0\\b & c & a\end{bmatrix},
\begin{bmatrix}0 & -a & 0\\-a & 0 & 0\\b & c & a\end{bmatrix},
\begin{bmatrix}-a & 0 & 0\\a & a & 0\\b & c & a\end{bmatrix},
\color{white}\right\rbrace\\
\]
\[
\hspace{2.55cm}\color{white}\left\rbrace\color{black}
\begin{bmatrix}-a & -a & 0\\a & 0 & 0\\b & c & a\end{bmatrix},
\begin{bmatrix}a & a & 0\\0 & -a & 0\\b & c & a\end{bmatrix}, 
\begin{bmatrix}a & 0 & 0\\0 & a & 0\\b & c & a\end{bmatrix}
: a \neq 0\right\rbrace\color{black}.
\]

\smallskip

Again, let $\phi \in \text{PAut}_{\text{gr}}(P)$. As discussed above, $\phi$ has one of the following six forms: $\phi\big\vert_{P_1} = 
\begin{brsm}0 & a & 0\\-a & -a & 0\\b & c & a\end{brsm}$,\\
$\begin{brsm}0 & -a & 0\\-a & 0 & 0\\b & c & a\end{brsm},
\begin{brsm}-a & 0 & 0\\a & a & 0\\b & c & a\end{brsm},
\begin{brsm}-a & -a & 0\\a & 0 & 0\\b & c & a\end{brsm},
\begin{brsm}a & a & 0\\0 & -a & 0\\b & c & a\end{brsm}, 
\begin{brsm}a & 0 & 0\\0 & a & 0\\b & c & a\end{brsm}$. The eigenvalues of the first and fourth matrices are $\lambda_1 = a, \lambda_2, \lambda_3 = \displaystyle{\frac{(-1 \pm i\sqrt{3})a}{2}}$. If $\phi$ is a Poisson reflection, then $\{\lambda_1, \lambda_2, \lambda_3\} = \{1,1,\xi\}$ for some primitive root of unity $\xi$. In any case, $\lambda_2 + \lambda_3 \neq -\lambda_1$, a contradiction. The eigenvalues of the sixth matrix are $a, a, a$; consequently, it is impossible for such a matrix to be a Poisson reflection. The eigenvalues of the second, third, and fifth matrices are $\lambda_1 = -a, \lambda_2, \lambda_3 = a$. If $\phi$ is a Poisson reflection, then $a = 1$ and there exists three candidates for $\phi$:
$\begin{brsm}0 & -1 & 0\\-1 & 0 & 0\\b & c & 1\end{brsm}$, 
$\begin{brsm}-1 & 0 & 0\\1 & 1 & 0\\b & c & 1\end{brsm}$, 
$\begin{brsm}1 & 1 & 0\\0 & -1 & 0\\b & c & 1\end{brsm}$. By a straightforward calculation,

\[
\begin{bmatrix}0 & -1 & 0\\-1 & 0 & 0\\b & c & 1\end{bmatrix}^n = 
\begin{cases}
\begin{bmatrix}1 & 0 & 0\\0 & 1 & 0\\\frac{n(b-c)}{2} & \frac{n(c-b)}{2} & 1\end{bmatrix} & n \text{ is even,}\\
\vspace{-0.75cm}\\
\begin{bmatrix}0 & -1 & 0\\-1 & 0 & 0\\\frac{n+1}{2}b-\frac{n-1}{2}c & \frac{n+1}{2}c-\frac{n-1}{2}b & 1\end{bmatrix} & n \text{ is odd.}
\end{cases}
\]

In this case, a finite order Poisson reflection $\phi$ satisfies $b = c$.

\[
\begin{bmatrix}-1 & 0 & 0\\1 & 1 & 0\\b & c & 1\end{bmatrix}^n = 
\begin{cases}
\begin{bmatrix}1 & 0 & 0\\0 & 1 & 0\\\frac{n}{2}c & nc & 1\end{bmatrix} & n \text{ is even,}\\
\vspace{-0.75cm}\\
\begin{bmatrix}0 & -1 & 0\\-1 & 0 & 0\\b+\frac{n-1}{2}c & nc & 1\end{bmatrix} & n \text{ is odd.}
\end{cases}
\]

In this case, a finite order Poisson reflection $\phi$ satisfies $c = 0$.

\[
\begin{bmatrix}1 & 1 & 0\\0 & -1 & 0\\b & c & 1\end{bmatrix}^n = 
\begin{cases}
\begin{bmatrix}1 & 0 & 0\\0 & 1 & 0\\nb & \frac{n}{2}b & 1\end{bmatrix} & n \text{ is even,}\\
\vspace{-0.75cm}\\
\begin{bmatrix}1 & 1 & 0\\0 & -1 & 0\\nb & \frac{n-1}{2}b+c & 1\end{bmatrix} & n \text{ is odd.}
\end{cases}
\]

In this case, a finite order Poisson reflection $\phi$ satisfies $b = 0$.

In conclusion,
\[
\text{PR}(P) = \left\{
\begin{bmatrix}0 & -1 & 0\\-1 & 0 & 0\\b & b & 1\end{bmatrix},
\begin{bmatrix}-1 & 0 & 0\\1 & 1 & 0\\b & 0 & 1\end{bmatrix}, 
\begin{bmatrix}1 & 1 & 0\\0 & -1 & 0\\0 & c & 1\end{bmatrix}
\right\}.
\]
\end{proof}

\smallskip

\begin{mylem}
If $G$ is a finite Poisson reflection group of $P$, then the invariant subalgebra $P^G$ is not isomorphic to $P$ as Poisson algebras. 
\end{mylem}

\begin{proof}
As discussed above, a generator $\phi$ of $G$ has three possible forms: $\phi\big\vert_{P_1} = 
\begin{brsm}0 & -1 & 0\\-1 & 0 & 0\\b & b & 1\end{brsm}, 
\begin{brsm}-1 & 0 & 0\\1 & 1 & 0\\b & 0 & 1\end{brsm}$,
$\begin{brsm}1 & 1 & 0\\0 & -1 & 0\\0 & c & 1\end{brsm}$. First, notice that the finite automorphism group $G$ cannot contain two Poisson reflections of the same form.
\begin{itemize}
\item Suppose that $G$ contains two distinct Poisson reflections of the first form: $\phi_1\big\vert_{P_1} = \begin{brsm}0 & -1 & 0\\-1 & 0 & 0\\b_1 & b_1 & 1\end{brsm}$, $\phi_2\big\vert_{P_1} = \begin{brsm}0 & -1 & 0\\-1 & 0 & 0\\b_2 & b_2 & 1\end{brsm}$. The product $(\phi_1\phi_2)\big\vert_{P_1} = \begin{brsm}1 & 0 & 0\\0 & 1 & 0\\b_2-b_1 & b_2-b_1 & 1\end{brsm}$ is a matrix of infinite order, contradicting to the finiteness of $G$. 

\item Suppose that $G$ contains two distinct Poisson reflections of the second form: $\phi_1\big\vert_{P_1} = \begin{brsm}-1 & 0 & 0\\1 & 1 & 0\\b_1 & 0 & 1\end{brsm}$, $\phi_2\big\vert_{P_1} = \begin{brsm}-1 & 0 & 0\\1 & 1 & 0\\b_2 & 0 & 1\end{brsm}$. The product $(\phi_1\phi_2)\big\vert_{P_1} = \begin{brsm}1 & 0 & 0\\0 & 1 & 0\\b_2-b_1 & 0 & 1\end{brsm}$ is a matrix of infinite order, contradicting to the finiteness of $G$. 

\item Suppose that $G$ contains two distinct Poisson reflections of the third form: $\phi_1\big\vert_{P_1} = \begin{brsm}1 & 1 & 0\\0 & -1 & 0\\0 & c_1 & 1\end{brsm}$, $\phi_2\big\vert_{P_1} = \begin{brsm}1 & 1 & 0\\0 & -1 & 0\\0 & c_2 & 1\end{brsm}$. The product $(\phi_1\phi_2)\big\vert_{P_1} = \begin{brsm}1 & 0 & 0\\0 & 1 & 0\\0 & b_2-b_1 & 1\end{brsm}$ is a matrix of infinite order, contradicting to the finiteness of $G$. 
\end{itemize}

Consequently, the finite Poisson reflection group $G$ falls into one of the following three categories:
\begin{enumerate}[label = (\arabic*)]
    \item The group $G$ is generated by three Poisson reflections of distinct types.

    \item The group $G$ is generated by two Poisson reflections of distinct types.

    \item The group $G$ is generated by a single Poisson reflection.
\end{enumerate}

In Case (1), $G = \langle 
\phi_1\big\vert_{P_1} = \begin{brsm}0 & -1 & 0\\-1 & 0 & 0\\a & a & 1\end{brsm}, 
\phi_2\big\vert_{P_1} = \begin{brsm}-1 & 0 & 0\\1 & 1 & 0\\b & 0 & 1\end{brsm},
\phi_3\big\vert_{P_1} = \begin{brsm}1 & 1 & 0\\0 & -1 & 0\\0 & c & 1\end{brsm}
\rangle$. By calculation, the product $(\phi_1\phi_2\phi_3)\big\vert_{P_1}^n$ is not equal to the identity matrix $I_3$ when $n$ is odd, and it equals the following matrix:
\[
(\phi_1\phi_2\phi_3)\big\vert_{P_1}^n = 
\begin{bmatrix}1 & 0 & 0\\0 & 1 & 0\\\frac{n}{2}(b+c-a) & n(b+c-a) & 1\end{bmatrix}
\]
when $n$ is even. As a result, the finiteness of $G$ necessitates the condition: $a = b + c$. Given this equality, through further calculations, we observe that:
\begin{alignat*}{3}
(\phi_1\phi_2\phi_1)\big\vert_{P_1} =& 
\begin{bmatrix}0 & -1 & 0\\-1 & 0 & 0\\b+c & b+c & 1\end{bmatrix}
\begin{bmatrix}-1 & 0 & 0\\1 & 1 & 0\\b & 0 & 1\end{bmatrix}
\begin{bmatrix}0 & -1 & 0\\-1 & 0 & 0\\b+c & b+c & 1\end{bmatrix} =& 
\begin{bmatrix}1 & 1 & 0\\0 & -1 & 0\\0 & c & 1\end{bmatrix}
=& \phi_3\big\vert_{P_1},\\
(\phi_2\phi_3\phi_2)\big\vert_{P_1} =& 
\begin{bmatrix}-1 & 0 & 0\\1 & 1 & 0\\b & 0 & 1\end{bmatrix}
\begin{bmatrix}1 & 1 & 0\\0 & -1 & 0\\0 & c & 1\end{bmatrix}
\begin{bmatrix}-1 & 0 & 0\\1 & 1 & 0\\b & 0 & 1\end{bmatrix} =&
\begin{bmatrix}0 & -1 & 0\\-1 & 0 & 0\\b+c & b+c & 1\end{bmatrix}
=& \phi_1\big\vert_{P_1},\\
(\phi_3\phi_1\phi_3)\big\vert_{P_1} =& 
\begin{bmatrix}1 & 1 & 0\\0 & -1 & 0\\0 & c & 1\end{bmatrix}
\begin{bmatrix}0 & -1 & 0\\-1 & 0 & 0\\b+c & b+c & 1\end{bmatrix}
\begin{bmatrix}1 & 1 & 0\\0 & -1 & 0\\0 & c & 1\end{bmatrix} =& 
\begin{bmatrix}-1 & 0 & 0\\1 & 1 & 0\\b & 0 & 1\end{bmatrix}
=& \phi_2\big\vert_{P_1}.
\end{alignat*}

In conclusion, we have established that $G = \langle\phi_1, \phi_2, \phi_3\rangle = \langle \phi_1, \phi_2 \rangle = \langle \phi_2, \phi_3 \rangle = \langle \phi_3, \phi_1 \rangle$, and it is sufficient to consider the case when $G = \langle \phi_1, \phi_2 \rangle$ for both Case (1) and Case (2).

\smallskip

In Case (2), $G = \langle 
\phi_1\big\vert_{P_1} = \begin{brsm}0 & -1 & 0\\-1 & 0 & 0\\a & a & 1\end{brsm}, 
\phi_2\big\vert_{P_1} = \begin{brsm}-1 & 0 & 0\\1 & 1 & 0\\b & 0 & 1\end{brsm}
\rangle$. By calculation,
\[
\phi_1^2\big\vert_{P_1} = \phi_2^2\big\vert_{P_1} = (\phi_1\phi_2)^3\big\vert_{P_1} = I_3, \quad \phi_1\phi_2 \neq \phi_2\phi_1.
\]
The first equality implies that $G$ is isomorphic to a quotient of the symmetric group $S_3$. Further, the second inequality implies that $G$ is isomorphic to $S_3$, as $S_3$ is the smallest finite non-abelian group. Apply the Molien's Theorem:
\[
h_{P^G}(t) = \frac{1}{6}\left(\frac{1}{(1-t)^3} + \frac{3}{(1-t)^2(1+t)} + \frac{2}{(1-t)(1+t+t^2)}\right)\\
= \frac{1}{(1-t)(1-t^2)(1-t^3)}.
\] 

In the meantime, notice that the elements $\displaystyle{y_1 = \frac{1}{3}(a+b)x_1+\frac{1}{3}(2a-b)x_2+x_3}$, $y_2 = x_1^2+x_2^2$ $+x_1x_2$, $\displaystyle{y_3 = 2x_1^3+3x_1^2x_2-3x_1x_2^2-2x_2^3}$ are three algebraically independent polynomials that are invariant under the action of $G$. Embed $\Bbbk[y_1,y_2,y_3]$ into the invariant subalgebra $P^G$. Since the domain and codomain share the same Hilbert series, the cokernel is trivial. In other words, the invariant subalgebra $P^G = \Bbbk[y_1,y_2,y_3]$ and has the following Poisson structure:
\[
\{y_1,y_2\} = y_3, \quad \{y_2, y_3\} = 0, \quad \{y_3, y_1\} = -6y_2^2.
\]
By applying Lemma 2.2, $P^G$ is not isomorphic to $P$ as Poisson algebras. 

\smallskip

In Case (3), we will concurrently discuss three subcases: $G_1 = \langle\phi_1\big\vert_{P_1} = \begin{brsm}0 & -1 & 0\\-1 & 0 & 0\\a & a & 1\end{brsm}\rangle$, 
$G_2 = \langle \phi_2\big\vert_{P_1} = \begin{brsm}-1 & 0 & 0\\1 & 1 & 0\\b & 0 & 1\end{brsm}\rangle$, 
and $G_3 = \langle\phi_3\big\vert_{P_1} = \begin{brsm}1 & 1 & 0\\0 & -1 & 0\\0 & c & 1\end{brsm}\rangle$. Notice that the finite Poisson reflection group $G_i$ is isomorphic to $\mathbb{Z}_2$ for all $1 \leq i \leq 3$, and the Hilbert series of the invariant subalgebra is
\[
h_{P^G}(t) = \frac{1}{2}\left(\frac{1}{(1-t)^3} + \frac{1}{(1-t)^2(1+t)}\right) = \frac{1}{(1-t)^2(1-t^2)}
\]
by applying Molien's Theorem. As in our previous analysis, we can find three algebraically independent polynomials that are invariant under the action of $G_i$:
\begin{center}
\renewcommand{\arraystretch}{1.25}
\begin{tabular}{ |p{1.5cm}||p{4cm}|p{4cm}|p{4cm}|} 
\hline
\centering &
\centering $\boldsymbol{y_1}$ &
\centering $\boldsymbol{y_2}$ &
\centering $\boldsymbol{y_3}$ \tabularnewline
\hline
\hline
\centering $G_1$ &
\centering $x_1x_2$ & \centering $-x_1+x_2$ & \centering $ax_1+x_3$
\tabularnewline
\hline
\centering $G_2$ &
\centering $x_1^2$ & \centering $\frac{b}{2}x_1+x_3$ & \centering $\frac{1}{2}x_1+x_2$
\tabularnewline
\hline
\centering $G_3$ &
\centering $x_2^2$ & \centering $-cx_1+x_3$ & \centering $2x_1+x_2$
\tabularnewline
\hline
\end{tabular}
\end{center}

By comparing the Hilbert series of the domain and the codomain of the embedding $\Bbbk[y_1, y_2, y_3] \hookrightarrow P^G$, we ascertain that the invariant subalgebra $P^G = \Bbbk[y_1, y_2, y_3]$ and has the following Poisson structures:
\begin{center}
\renewcommand{\arraystretch}{1.25}
\begin{tabular}{ |p{1.5cm}||p{4cm}|p{4cm}|p{4cm}|} 
\hline
\centering &
\centering $\boldsymbol{\{y_1, y_2\}}$ &
\centering $\boldsymbol{\{y_2, y_3\}}$ &
\centering $\boldsymbol{\{y_3, y_1\}}$ \tabularnewline
\hline
\hline
\centering $G_1$ &
\centering $0$ & \centering $6y_1+y_2^2$ & \centering $y_1y_2$
\tabularnewline
\hline
\centering $G_2$ &
\centering $-4y_1y_3$ & \centering $\frac{3}{4}y_1-y_3^2$ & \centering $0$
\tabularnewline
\hline
\centering $G_3$ &
\centering $2y_1y_3$ & \centering $-\frac{3}{2}y_1 + \frac{1}{2}y_3^2$ & \centering $0$
\tabularnewline
\hline
\end{tabular}
\end{center}

Suppose that $P^G \cong P$ as Poisson algebras. Specifically, by applying (the contrapositive of) Lemma 2.1, the invariant subalgebra is unimodular. This implies that we can find a superpotential $\Omega \in \Bbbk[y_1, y_2, y_3]$ satisfying:
\[
\frac{\partial \Omega}{\partial y_3} = \{y_1, y_2\}, \quad \frac{\partial \Omega}{\partial y_1} = \{y_2, y_3\}, \quad \frac{\partial \Omega}{\partial y_2} = \{y_3, y_1\}.
\]
It is straightforward to verify no such $\Omega$ exists for $G_1$, $G_2$, and $G_3$, and therefore, $P^G$ is not unimodular, a contradiction. 
\end{proof}

\smallskip
\subsection{\texorpdfstring{$\Omega_5 = x_1^3 + x_1^2x_3, \quad \{x_1,x_2\} = x_2^2, \quad \{x_2,x_3\} = 3x_1^2, \quad \{x_3,x_1\} = 2x_2x_3$}{Case 5}}

\begin{mylem}
\hspace{.1cm}
\begin{center}
\renewcommand{\arraystretch}{1.25}
\begin{tabular}{ |p{8cm}|p{8cm}|} 
\hline
\centering ${\textbf{PAut}_{{\textbf{gr}}}\boldsymbol{(P)}}$ &
\centering $\textbf{PR(}\boldsymbol{P}\textbf{)}$ \tabularnewline
\hline
\hline
\centering $\left\lbrace
\begin{bmatrix}
a & 0 & 0\\
0 & a & 0\\
0 & 0 & a
\end{bmatrix}: a \neq 0\right\rbrace$ &
\centering $\emptyset$
\tabularnewline
\hline
\end{tabular}
\end{center}
\end{mylem}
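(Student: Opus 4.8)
The plan is to determine $\text{PAut}_{\text{gr}}(P)$ first and then read off $\text{PR}(P)$ from the eigenvalues. Writing a candidate automorphism as $\phi = [a_{ij}]$, I would apply equation (2.1) to each of the three pairs $(i,j) \in \{(1,2),(1,3),(2,3)\}$. For each pair the left-hand side $\phi(\{x_i,x_j\})$ is obtained by substituting $\phi(x_k) = \sum_l a_{kl}x_l$ into the given quadratic bracket, while the right-hand side is the explicit linear combination $\sum_{k<l}(a_{ik}a_{jl}-a_{il}a_{jk})\{x_k,x_l\}$ of the three basis brackets $x_2^2$, $-2x_2x_3$, $3x_1^2$. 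Matching the coefficients of the six monomials $x_1^2,x_2^2,x_3^2,x_1x_2,x_1x_3,x_2x_3$ then yields a system of polynomial equations in the $a_{ij}$.

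The key is to exploit the vanishing equations in the right order so that the system decouples. Since none of the three basis brackets contains an $x_3^2$ term, the coefficient of $x_3^2$ in the pair $(1,2)$ forces $a_{23}=0$, and the coefficient of $x_3^2$ in the pair $(2,3)$ forces $a_{13}=0$. Feeding these back, the coefficient of $x_1^2$ in the pair $(1,2)$ collapses to $a_{21}^2=0$, so $a_{21}=0$ as well. With $a_{13}=a_{21}=a_{23}=0$ the determinant reduces to $a_{11}a_{22}a_{33}$, so invertibility makes all three diagonal entries nonzero; the $x_2^2$ equation from $(1,2)$ then gives $a_{22}=a_{11}$, and the $x_1^2$ equation from $(2,3)$ gives $a_{33}=a_{11}$.

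Finally I would clear the remaining off-diagonal entries one at a time: the $x_1x_2$ coefficient from the pair $(2,3)$ forces $a_{12}=0$, which together with the $x_2^2$ equation from $(2,3)$ forces $a_{31}=0$, and the $x_2^2$ coefficient from the pair $(1,3)$ forces $a_{32}=0$. This leaves $\phi = aI_3$ with $a \neq 0$, giving the stated $\text{PAut}_{\text{gr}}(P)$. For the reflection set, a scalar matrix $aI_3$ has eigenvalues $a,a,a$, whereas a Poisson reflection must have eigenvalues $1,1,\xi$ with $\xi \neq 1$ a root of unity; these are incompatible, so $\text{PR}(P)=\emptyset$.

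I expect no serious obstacle, as the argument is entirely a finite coefficient-matching computation. The only real subtlety is the bookkeeping: choosing the order in which to solve the equations so that $a_{13},a_{23},a_{21}$ are eliminated first, after which the diagonal and then the off-diagonal entries cascade. The one place to be careful is justifying the nonvanishing of the diagonal entries before dividing, which follows cleanly from the simplified form of the determinant once the three zero entries are in place.
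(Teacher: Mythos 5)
Your proposal is correct and follows essentially the same route as the paper: apply equation (2.1) to each pair of generators, match monomial coefficients, and eliminate entries in an order that first kills $a_{13},a_{23},a_{21}$, then forces the diagonal entries to agree and the remaining off-diagonal entries to vanish, leaving $\phi = aI_3$; the triple eigenvalue then rules out any Poisson reflection. The only difference from the paper is the (immaterial) order in which the equations are consumed.
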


\begin{proof}
Let $\phi \in \text{PAut}_{\text{gr}}(P)$. Apply (2.1), we have the following system of equations, with redundant equations omitted:
\begin{enumerate}[label = (\arabic*)]

    \item $a_{21}^2 = 3a_{12}a_{23}-3a_{13}a_{22}$.
    
    \item $a_{23}^2 = 0$.
    
    \item $a_{11}^2 = a_{22}a_{33}-a_{23}a_{32}$.
        
    \item $a_{13}^2 = 0$.
    
    \item $a_{11}a_{12} = 0$. 
    
    \item $2a_{22}a_{32} = a_{12}a_{31}-a_{11}a_{32}$.
        
    \item $a_{21}a_{32}+a_{22}a_{31} = 0$.
        
    \item $a_{22}a_{33}+a_{23}a_{32} = a_{11}a_{33}-a_{13}a_{31}$.
\end{enumerate}

It is immediate from (2) and (4) that $a_{13} = a_{23} = 0$ and $a_{33} \neq 0$. Expanding upon these, (1) and (8) suggest that $a_{21} = 0$ and $a_{11} = a_{22} \neq 0$. Continuing further, (3) and (5) imply that $a_{11} = a_{33}$ and $a_{12} = 0$. Finally, (6) and (7) necessitate that $a_{31} = a_{32} = 0$. In conclusion,
\[
\text{PAut}_{\text{gr}}(P) = \left\lbrace
\begin{bmatrix}
a & 0 & 0\\
0 & a & 0\\
0 & 0 & a
\end{bmatrix}: a \neq 0\right\rbrace.
\]

It is clear that there are no Poisson reflections for this Poisson structure, as any graded Poisson automorphism has three repeated eigenvalues.
\end{proof}

\smallskip
\subsection{\texorpdfstring{$\Omega_6 = x_1^3 + x_1^2x_3 + x_2x_3, \quad \{x_1, x_2\} = x_1^2+x_2^2, \quad \{x_2, x_3\} = 3x_1^2+2x_1x_3, \quad \{x_3, x_1\} = 2x_2x_3$}{Case 6}}

\begin{mylem}
\hspace{.1cm}
\begin{center}
\renewcommand{\arraystretch}{1.25}
\begin{tabular}{ |p{8cm}|p{8cm}|} 
\hline
\centering ${\textbf{PAut}_{{\textbf{gr}}}\boldsymbol{(P)}}$ &
\centering $\textbf{PR(}\boldsymbol{P}\textbf{)}$ \tabularnewline
\hline
\hline
\centering $\left\lbrace
\begin{bmatrix}a & 0 & 0\\0 & a & 0\\0 & 0 & a\end{bmatrix}, 
\begin{bmatrix}-\frac{1}{2}a & \pm\frac{\sqrt{3}}{2}a & 0\\\mp\frac{\sqrt{3}}{2}a & -\frac{1}{2}a & 0\\\frac{9}{8}a & \mp\frac{3\sqrt{3}}{8}a & a\end{bmatrix}
: a \neq 0\right\rbrace$ &
\centering $\emptyset$
\tabularnewline
\hline
\end{tabular}
\end{center}
\end{mylem}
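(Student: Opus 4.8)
The plan is to follow the same template as the preceding cases: write an arbitrary $\phi \in \text{PAut}_{\text{gr}}(P)$ as a matrix $[a_{ij}]$ and apply (2.1) to each of the three generating pairs $(1,2)$, $(2,3)$, $(3,1)$, expanding both sides as homogeneous quadratics in $x_1, x_2, x_3$ and equating the coefficients of the six monomials $x_1^2, x_2^2, x_3^2, x_1x_2, x_1x_3, x_2x_3$. The decisive structural observation is that every right-hand side $\{\phi(x_i),\phi(x_j)\}$ is, by (2.1), a $\Bbbk$-linear combination of the three brackets $x_1^2+x_2^2$, $-2x_2x_3$, and $3x_1^2+2x_1x_3$; consequently no right-hand side ever contains the monomials $x_3^2$ or $x_1x_2$. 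Matching these two ``missing'' coefficients on the left will drive the whole computation.

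First I would exploit the vanishing of the $x_3^2$-coefficient. From pair $(1,2)$ this gives $a_{13}^2+a_{23}^2=0$, from pair $(2,3)$ it gives $a_{13}(3a_{13}+2a_{33})=0$, and from pair $(3,1)$ it gives $a_{23}a_{33}=0$. Combining these and invoking invertibility of $\phi$ (the third column cannot vanish) rules out the spurious branch $a_{23}=\pm i\, a_{13}\ne 0$ and forces $a_{13}=a_{23}=0$ with $a_{33}\ne 0$. Thus $\phi$ is block lower-triangular, leaving $\Bbbk x_1\oplus\Bbbk x_2$ invariant.

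With this block structure in hand, the $(1,2)$-equation reduces to $\phi(x_1)^2+\phi(x_2)^2 = (\det B)(x_1^2+x_2^2)$, where $B=[a_{ij}]_{1\le i,j\le 2}$; matching coefficients shows the columns of $B$ are orthogonal of common squared-norm $\det B$, so $B$ is a scalar multiple of a rotation. The $x_1x_3$- and $x_2x_3$-coefficients of pairs $(2,3)$ and $(3,1)$ then give $a_{11}=a_{22}$ and $a_{21}=-a_{12}$, and the vanishing $x_1x_2$-coefficients together with the $x_i^2$-coefficients of the mixed equations determine the remaining entries. A clean case split on $a_{12}$ finishes this: $a_{12}=0$ collapses everything to the scalar family $a_{33}I_3$, while $a_{12}\ne 0$ forces $a_{11}=a_{22}=-\tfrac12 a_{33}$, $a_{12}=-a_{21}=\pm\tfrac{\sqrt3}{2}a_{33}$, and the bottom row $(a_{31},a_{32})=(\tfrac98 a_{33},\, \mp\tfrac{3\sqrt3}{8}a_{33})$, i.e. the rotation-by-$\pm120^\circ$ family displayed in the table.

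Finally, for $\text{PR}(P)=\emptyset$ the block-triangular form makes the eigenvalues immediate: the scalar matrix has spectrum $\{a,a,a\}$, and the rotation matrix has spectrum $\{a, a\omega, a\omega^2\}$ with $\omega$ a primitive cube root of unity (the $2\times2$ block has trace $-a$ and determinant $a^2$). In neither case are two eigenvalues equal to $1$, so the spectrum can never be $\{1,1,\xi\}$, and no Poisson reflection exists. I expect the main obstacle to be not any single idea but the bookkeeping in the middle step: extracting the precise constants $\tfrac98$ and $\tfrac{3\sqrt3}{8}$ requires simultaneously juggling the $x_1^2$-, $x_2^2$-, and $x_1x_2$-coefficients of the two mixed equations, and one must track the sign correlations carefully so that the two $\pm$ branches close up consistently with the table.
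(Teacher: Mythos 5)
Your proposal is correct and follows essentially the same route as the paper: apply (2.1), use the coefficient equations coming from the monomials absent from the brackets to force $a_{13}=a_{23}=0$ and $a_{33}\neq 0$, derive $a_{11}=a_{22}$ and $a_{12}=-a_{21}$, split on $a_{12}$ to obtain the scalar and rotation families, and then rule out reflections by noting the spectra $\{a,a,a\}$ and $\{a,a\omega,a\omega^2\}$ can never be $\{1,1,\xi\}$. The "missing monomial" and "scalar-times-rotation" framings are pleasant conceptual glosses, but the underlying computation and conclusions coincide with the paper's.
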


\begin{proof}
Let $\phi \in \text{PAut}_{\text{gr}}(P)$. Apply (2.1), we have the following system of equations, with redundant equations omitted:
\begin{enumerate}[label = (\arabic*)]

    \item $a_{13}^2 + a_{23}^2 = 0.$
    
    \item $3a_{11}^2+2a_{11}a_{31} = a_{21}a_{32}-a_{22}a_{31}+3a_{22}a_{33}-3a_{23}a_{32}$.

    \item $3a_{12}^2+2a_{12}a_{32} = a_{21}a_{32}-a_{22}a_{31}$.

    \item $3a_{13}^2+2a_{13}a_{33} = 0.$
    
    \item $3a_{11}a_{12}+a_{11}a_{32}+a_{12}a_{31} = 0$.
    
    \item $3a_{11}a_{13}+a_{11}a_{33}+a_{13}a_{31} = a_{22}a_{33}-a_{23}a_{32}$.

    \item $3a_{12}a_{13}+a_{12}a_{33}+a_{13}a_{32} = a_{23}a_{31}-a_{21}a_{33}$.

    \item $2a_{21}a_{31} = a_{12}a_{31}-a_{11}a_{32} + 3a_{13}a_{32}-3a_{12}a_{33}$.
        
    \item $a_{23}a_{33} = 0.$

    \item $a_{21}a_{32}+a_{22}a_{31} = 0.$
\end{enumerate}

Suppose that $a_{33} = 0$. Equation (1) and (4) necessitate that $a_{13} = a_{23} = 0$, a contradiction to the invertibility. Therefore, it follows that $a_{33} \neq 0$, and subsequently, according to equations (1) and (9), $a_{13} = a_{23} = 0$. From (6) and (7), $a_{11} = a_{22}$ and $a_{12} = -a_{21}$. 

Let us assume that $a_{12} = 0$ (implying $a_{21} = 0$ implicitly). From (5), (10), and the invertibility, $a_{31} = a_{32} = 0$. Finally, from (2), $a_{11} = a_{22} = a_{33}$ and the remaining equations are nullified. This results in one possible form of $\phi$: 
$\begin{brsm}a & 0 & 0\\0 & a & 0\\0 & 0 & a\end{brsm}$ for some $a \neq 0$. Now, let us consider the alternative scenario $a_{12} \neq 0$. From a combination of (3) and (10), we can derive that $a_{32} = -\frac{3}{4}a_{12}$. Substituting our results into (5), we obtain that $a_{31} = -\frac{9}{4}a_{11}$. Examining (8), we observe that $a_{11} = -\frac{1}{2}a_{33}$. Lastly, from (10), $a_{12} = \pm\sqrt{3}a_{11}$ and the remaining equations are nullified. This results in one possible form of $\phi$: 
$\begin{brsm}-\frac{1}{2}a & \pm\frac{\sqrt{3}}{2}a & 0\\\mp\frac{\sqrt{3}}{2}a & -\frac{1}{2}a & 0\\\frac{9}{8}a & \mp\frac{3\sqrt{3}}{8}a & a\end{brsm}$ for some $a \neq 0$. In conclusion,
\[
\text{PAut}_{\text{gr}}(P) = \left\lbrace
\begin{bmatrix}a & 0 & 0\\0 & a & 0\\0 & 0 & a\end{bmatrix}, 
\begin{bmatrix}-\frac{1}{2}a & \pm\frac{\sqrt{3}}{2}a & 0\\\mp\frac{\sqrt{3}}{2}a & -\frac{1}{2}a & 0\\\frac{9}{8}a & \mp\frac{3\sqrt{3}}{8}a & a\end{bmatrix}
: a \neq 0\right\rbrace.
\]

\smallskip

Again, let $\phi \in \text{PAut}_{\text{gr}}(P)$. If $\phi\big\vert_{P_1} = 
\begin{brsm}a & 0 & 0\\0 & a & 0\\0 & 0 & a\end{brsm}$, then $\phi$ is not a Poisson reflection because it has three repeated eigenvalues. If $\phi\big\vert_{P_1} = \begin{brsm}-\frac{1}{2}a & \pm\frac{\sqrt{3}}{2}a & 0\\\mp\frac{\sqrt{3}}{2}a & -\frac{1}{2}a & 0\\\frac{9}{8}a & \mp\frac{3\sqrt{3}}{8}a & a\end{brsm}$, its eigenvalues are $\lambda_1 = a, \lambda_2, \lambda_3 = \displaystyle{\frac{(-1 \pm i\sqrt{3})a}{2}}$ and therefore it is not a Poisson reflections (as discussed in subsection 3.3).
\end{proof} 

\smallskip
\subsection{\texorpdfstring{$\Omega_7 = \frac{1}{3}(x_1^3 + x_2^3 + x_3^3) + \lambda x_1x_2x_3 \hspace{.1cm}(\lambda^3 \neq -1),\\ \color{white}aaaa\color{black} \{x_1, x_2\} = x_3^2+\lambda x_1x_2, \quad \{x_2, x_3\} = x_1^2 + \lambda x_2x_3, \quad \{x_3, x_1\} = x_2^2 + \lambda x_1x_3$}{Case 7}}

\clearpage

\begin{mylem}
\hspace{.1cm}
\begin{center}
\renewcommand{\arraystretch}{1.25}
\begin{tabular}{ |p{12cm}|p{4cm}|} 
\hline
\centering ${\textbf{PAut}_{{\textbf{gr}}}\boldsymbol{(P)}}$ &
\centering $\textbf{PR(}\boldsymbol{P}\textbf{)}$ \tabularnewline
\hline
\hline
\centering $\left(
\langle\begin{bmatrix}a & 0 & 0\\0 & a & 0\\0 & 0 & a\end{bmatrix}: a \neq 0\rangle \times \langle\begin{bmatrix}1 & 0 & 0\\0 & b & 0\\0 & 0 & b^2\end{bmatrix}: b = \xi_3, \xi_3^2\rangle
\right) \rtimes 
\langle\begin{bmatrix}0 & 1 & 0\\0 & 0 & 1\\1 & 0 & 0\end{bmatrix}\rangle$ &
\centering $\emptyset$
\tabularnewline
\hline
\end{tabular}
\end{center}
\end{mylem}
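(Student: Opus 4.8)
The plan is to follow the template of the preceding cases: represent an arbitrary $\phi \in \text{PAut}_{\text{gr}}(P)$ by a matrix $[a_{ij}]$, feed it into (2.1) for each of the three brackets, and match coefficients of the six quadratic monomials to obtain a polynomial system in the $a_{ij}$ whose solution set is $\text{PAut}_{\text{gr}}(P)$. Before grinding through this, I would record the organizing principle that makes the answer transparent. Writing the Jacobian bracket as $\{f,g\} = \langle \nabla f \times \nabla g, \nabla \Omega\rangle$, a short computation using $(A^{T}u)\times(A^{T}v) = \det(A)\,A^{-1}(u\times v)$ shows that the matrix $A = [a_{ij}]$ defines a graded Poisson automorphism if and only if $\Omega(Ax) = \det(A)\,\Omega(x)$. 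This condition can also be read off directly from (2.1), and I would use it both to streamline the coefficient matching and as a check on the final list.

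With this in hand the classification becomes a question about the linear symmetries of the Hesse cubic $\Omega_7$. The diagonal scalings $\text{diag}(1,b,b^2)$ with $b^3=1$ and the cyclic permutation $P$ both fix $\Omega_7$ and have determinant $1$, so they qualify; the scalars $aI$ scale $\Omega_7$ by $a^3 = \det(aI)$, so they qualify as well. The decisive point, and the reason only the cyclic $\mathbb{Z}_3$ and not the full symmetric group appears, is that a symmetry fixing $\Omega_7$ but with determinant $-1$ (such as a transposition of coordinates) fails $\Omega(Ax)=\det(A)\Omega(x)$, since $\Omega_7$ is symmetric and cannot satisfy $\Omega_7 = -\Omega_7$. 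To see these generate everything, note that writing $\Omega(Ax) = c_A\,\Omega(x)$ makes $A \mapsto c_A/\det(A)$ a homomorphism to $\Bbbk^{\times}$ on the projective symmetry group of the smooth Hesse cubic (which is the familiar order-$18$ group); its kernel is precisely the index-two Heisenberg subgroup $\mathbb{Z}_3\times\mathbb{Z}_3$ generated by $P$ and $\text{diag}(1,b,b^2)$, and lifting by the central scalars $\Bbbk^{\times}$ recovers the stated group $(\langle aI\rangle \times \langle\text{diag}(1,b,b^2)\rangle)\rtimes\langle P\rangle$. Checking the semidirect bookkeeping (conjugation by $P$ sends $\text{diag}(1,b,b^2)$ to a scalar multiple of itself, hence normalizes but does not centralize the diagonal part) is fiddly but routine.

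For the reflection count I would examine eigenvalues type by type, exactly as in subsections 3.3 and 3.6. A scalar $aI$ has eigenvalues $a,a,a$, never of the shape $1,1,\xi$ with $\xi\neq 1$. An element $\text{diag}(a,ab,ab^2)$ of the identity coset with $b\neq 1$ has three pairwise distinct eigenvalues (consecutive ratios $b,b^2\neq 1$), so it cannot carry a repeated eigenvalue equal to $1$. Finally, any element $M$ in a permutation coset satisfies $M^3 = cI$ for some scalar $c$, because the cyclic part cubes to the identity and the three cyclic shifts of $\text{diag}(1,b,b^2)$ multiply to $I$; hence its eigenvalues are $\mu,\mu\xi_3,\mu\xi_3^2$, again three distinct values and never $1,1,\xi$. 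Therefore no element is a Poisson reflection and $\text{PR}(P)=\emptyset$.

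The main obstacle I anticipate is not any single step but the completeness of the classification: confirming that the $a_{ij}$-system (equivalently the symmetry condition) forces $A$ into the listed group rather than admitting sporadic solutions for special values of $\lambda$. Here the hypothesis $\lambda^3\neq -1$ is exactly what guarantees that $\Omega_7$ is a smooth member of the Hesse pencil, which keeps the symmetry group finite modulo scalars and pins it to order nine; the careful part of the argument is ruling out accidental coincidences among the equations that would enlarge the automorphism group, and this is where the bulk of the casework lives.
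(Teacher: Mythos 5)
Your route is genuinely different from the paper's. For this case the paper does not redo the computation at all: it quotes the classification of $\text{PAut}_{\text{gr}}(P)$ for the Hesse-type structure from \cite[Theorem 1]{MLTU}, and for $\text{PR}(P)=\emptyset$ it invokes the analysis of linear Poisson normal elements in \cite[Lemmas 2.2, 4.3]{GVW} rather than examining eigenvalues. Your argument is self-contained and rests on the correct and clean criterion that for a Jacobian structure with potential $\Omega$ a linear map $A$ is a graded Poisson automorphism if and only if $\Omega(Ax)=\det(A)\,\Omega(x)$; your eigenvalue analysis of the three cosets (scalars have a triple eigenvalue; $\mathrm{diag}(a,ab,ab^2)$ with $b$ a primitive cube root has three distinct eigenvalues; an element $DP^{\pm1}$ of a permutation coset has characteristic polynomial $t^3-\det(D)$, hence eigenvalues $\mu,\mu\xi_3,\mu\xi_3^2$) is correct and matches the style the paper uses in subsections 3.3 and 3.6. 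What your approach buys is independence from the external references; what it costs is that you must actually close the classification step, which the paper outsources.

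That is where the one real soft spot sits. Your reduction to ``the familiar order-$18$ projective symmetry group'' of the smooth Hesse cubic is only valid for generic $\lambda$: the hypothesis $\lambda^3\neq-1$ guarantees smoothness but not genericity, and for special smooth members (for instance $\lambda=0$, the Fermat cubic, and the other values with $j$-invariant $0$) the projective linear symmetry group has order $54$, so the statement that the kernel of $A\mapsto c_A/\det(A)$ is ``the index-two Heisenberg subgroup'' does not apply as written. The conclusion survives --- e.g.\ for the Fermat cubic a diagonal symmetry $\mathrm{diag}(b_1,b_2,b_3)$ with $b_i^3$ all equal satisfies $c_A=\det(A)$ only when, after normalizing by scalars, $b_2b_3=1$, which collapses the extra diagonal symmetries back to $\langle\mathrm{diag}(1,b,b^2)\rangle$, and the odd permutations still fail on sign --- but this check has to be carried out for the exceptional $\lambda$ rather than deduced from the generic order count. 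You flag this as ``where the bulk of the casework lives,'' which is accurate; as submitted, though, that casework is the part of the proof that is actually missing, whereas the eigenvalue half of the lemma is complete.
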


\begin{proof}
The Poisson automorphism group $\text{PAut}_{\text{gr}}(P)$ has been classified in \cite[Theorem 1]{MLTU}. For Poisson reflections, Gaddis, Veerapen, and Wang have provided a comprehensive analysis of the linear Poisson normal elements pertaining to this particular instance \cite[Lemma 4.3]{GVW}. Given that $\lambda^3 \neq 1$, it follows that there are no linear Poisson normal elements, thereby precluding the existence of Poisson reflections \cite[Lemma 2.2]{GVW}. 
\end{proof}

\smallskip
\subsection{\texorpdfstring{$\Omega_8 = x_1^3 + x_1^2x_2 + x_1x_2x_3, \quad \{x_1, x_2\} = x_1x_2, \quad \{x_2, x_3\} = 3x_1^2+2x_1x_2+x_2x_3, \quad \{x_3, x_1\} = x_1^2+x_1x_3$}{Case 8}}

\begin{mylem}
\hspace{.1cm}
\begin{center}
\renewcommand{\arraystretch}{1.25}
\begin{tabular}{ |p{8cm}|p{8cm}|} 
\hline
\centering ${\textbf{PAut}_{{\textbf{gr}}}\boldsymbol{(P)}}$ &
\centering $\textbf{PR(}\boldsymbol{P}\textbf{)}$ \tabularnewline
\hline
\hline
\centering $\left\lbrace
\begin{bmatrix}a & 0 & 0\\0 & \frac{a^2}{b} & 0\\b-a & 0 & b\end{bmatrix}: a \neq 0\right\rbrace$ &
\centering $\left\{
\begin{bmatrix}-1 & 0 & 0\\0 & 1 & 0\\2 & 0 & 1\end{bmatrix}
\right\}$
\tabularnewline
\hline
\end{tabular}
\end{center}
\end{mylem}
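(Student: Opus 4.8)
The plan is to follow the same template used in the previous eight cases: first classify $\text{PAut}_{\text{gr}}(P)$ by converting the Poisson-homomorphism condition into a polynomial system, and then extract $\text{PR}(P)$ by an eigenvalue count.

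To determine $\text{PAut}_{\text{gr}}(P)$, I would write a candidate $\phi \in \text{PAut}_{\text{gr}}(P)$ as a matrix $[a_{ij}]$ and substitute into $(2.1)$ for each of the three generating brackets $\{x_1,x_2\}$, $\{x_2,x_3\}$, $\{x_3,x_1\}$. Each substitution produces an identity of homogeneous quadratics in $x_1, x_2, x_3$; comparing the coefficients of the six monomials $x_1^2, x_2^2, x_3^2, x_1x_2, x_1x_3, x_2x_3$ yields a finite system of quadratic equations in the $a_{ij}$. I expect the pure-square terms to force the third-column entries $a_{13}, a_{23}$ to vanish first (as in the earlier cases), after which the $x_3$-direction decouples and the remaining equations collapse onto relations among the surviving entries. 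The anticipated outcome is that $\phi$ is forced into the shape in the table: $a_{12} = a_{21} = a_{32} = 0$, together with $a_{22} = a_{11}^2/a_{33}$ and $a_{31} = a_{33} - a_{11}$, leaving the two free parameters $a = a_{11}$ and $b = a_{33}$, subject to $a \neq 0$ and $b \neq 0$ (the latter being implicit, since $b$ occurs as a denominator and as the $(3,3)$-entry; note $\det \phi\big\vert_{P_1} = a^3$, so invertibility is exactly $a \neq 0$).

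For $\text{PR}(P)$, I would read off the eigenvalues of the displayed matrix. Since the second coordinate decouples and the block on $\{x_1, x_3\}$ is lower-triangular, the eigenvalues are $a$, $b$, $a^2/b$. A Poisson reflection requires exactly two of these to equal $1$ and the third to be a primitive root of unity. Imposing $a = b = 1$, or $a = 1 = a^2/b$, forces all three eigenvalues to be $1$, i.e. the identity. The only remaining possibility is $b = 1$ together with $a^2/b = 1$, which gives $a^2 = 1$; since $a = 1$ again yields the identity, the sole nontrivial solution is $a = -1$, $b = 1$, with eigenvalues $\{-1, 1, 1\}$. Substituting $a = -1$, $b = 1$ into the table produces the matrix $\left[\begin{smallmatrix}-1 & 0 & 0\\0 & 1 & 0\\2 & 0 & 1\end{smallmatrix}\right]$ in the $\text{PR}(P)$ column, and a direct computation shows its square is the identity, confirming finite order and hence that it is a genuine Poisson reflection.

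The main obstacle is the bookkeeping of the quadratic system for $\text{PAut}_{\text{gr}}(P)$: because $\{x_2, x_3\} = 3x_1^2 + 2x_1x_2 + x_2x_3$ mixes several monomials, the coefficient comparison couples the unknowns nonlinearly, and some care is needed to resolve them in the right order (vanishing the $x_3$-components first, then the remaining off-diagonal entries) while tracking the genericity conditions $a, b \neq 0$ that keep $\phi$ invertible. Once the matrix form is pinned down, the reflection count is routine.
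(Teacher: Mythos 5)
Your proposal follows essentially the same route as the paper: the paper likewise converts (2.1) into a system of nine quadratic equations in the $a_{ij}$, eliminates $a_{12}$, $a_{23}$, $a_{13}$, $a_{21}$, $a_{32}$ in turn to arrive at the two-parameter family with $a_{22}=a_{11}^2/a_{33}$ and $a_{31}=a_{33}-a_{11}$, and then finds the unique nontrivial reflection from the eigenvalues $a$, $a^2/b$, $b$ (the paper phrases your case split via the identity $\lambda_2\lambda_3=\lambda_1^2$, which forces $\xi=-1$ and hence $a=-1$, $b=1$, with the resulting matrix of order 2). The only cosmetic difference is that you leave the coefficient bookkeeping as a plan and guess that the third-column entries vanish first, whereas the paper's elimination happens to begin with $a_{12}=0$; the endpoint and logic are identical.
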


\begin{proof}
Let $\phi \in \text{PAut}_{\text{gr}}(P)$. Apply (2.1), we have the following system of equations, with redundant equations omitted:
\begin{enumerate}[label = (\arabic*)]
    \item $a_{11}a_{21} = 3a_{12}a_{23}-3a_{13}a_{22}+a_{13}a_{21}-a_{11}a_{23}$.
    
    \item $a_{12}a_{22} = 0$.

    \item $a_{13}a_{23} = 0$.

    \item $a_{11}a_{23}+a_{13}a_{21} = a_{13}a_{21}-a_{11}a_{23}$.
        
    \item $3a_{11}^2+a_{21}a_{31}+2a_{11}a_{21} = 3a_{22}a_{33}-3a_{23}a_{32}+a_{23}a_{31}-a_{21}a_{33}$.
    
    \item $3a_{12}^2+a_{22}a_{32}+2a_{12}a_{22} = 0$.

    \item $3a_{13}^2+a_{23}a_{33}+2a_{13}a_{23} = 0$.

    \item $6a_{11}a_{12}+a_{21}a_{32}+a_{22}a_{31}+2a_{11}a_{22}+2a_{12}a_{21} = a_{21}a_{32}-a_{22}a_{31}+2a_{22}a_{33}-2a_{23}a_{32}$.

    \item $a_{11}a_{32}+a_{12}a_{31}+2a_{11}a_{12} = a_{12}a_{31}-a_{11}a_{32}+2a_{13}a_{32}-2a_{12}a_{33}.$
\end{enumerate}

Suppose that $a_{12} \neq 0$. By combining (2) and (6), we can deduce that $a_{12} = 0$, a contradiction. Consequently, it follows that $a_{12} = 0$. From (3), (4), and the invertibility, $a_{23} = 0$. Immediately, (7) translates to $a_{13} = 0$ and $a_{11}, a_{33} \neq 0$. Subsequently, it can be inferred from (1) and (9) that $a_{21} = a_{32} = 0$ and $a_{22} \neq 0$. Based on (5) and (8), it follows that $a_{22} = \frac{a_{11}^2}{a_{33}}$ and $a_{31} = a_{33}-a_{11}$, respectively. In conclusion, 
\[
\text{PAut}_{\text{gr}}(P) = \left\lbrace
\begin{bmatrix}a & 0 & 0\\0 & \frac{a^2}{b} & 0\\b-a & 0 & b\end{bmatrix}: a \neq 0\right\rbrace.
\]

\smallskip

Again, let $\phi \in \text{PAut}_{\text{gr}}(P)$. Its eigenvalues are $\lambda_1 = a, \lambda_2 = \displaystyle{\frac{a^2}{b}}, \lambda_3 = b$. If $\phi$ is a Poisson reflection, then $\{\lambda_1, \lambda_2, \lambda_3\} = \{1, 1, \xi\}$ for some primitive root of unity $\xi$. If $\lambda_1 = 1$, then $\lambda_2\lambda_3 = \xi \neq \lambda_1^2$, a contradiction. If $\lambda_1 = \xi$, then the constraint $\lambda_2\lambda_3 = \lambda_1^2$ implies that $\xi = -1$ and $\phi\big\vert_{P_1}$ takes the form $\begin{brsm}-1 & 0 & 0\\0 & 1 & 0\\2 & 0 & 1\end{brsm}$. Such $\phi$ has finite order 2. In conclusion,
\[
\text{PR}(P) = \left\{
\begin{bmatrix}-1 & 0 & 0\\0 & 1 & 0\\2 & 0 & 1\end{bmatrix}
\right\}.
\]
\end{proof}

\smallskip

\begin{mylem}
If $G$ is a finite Poisson reflection group of $P$, then the invariant subalgebra $P^G$ is not isomorphic to $P$ as Poisson algebras. 
\end{mylem}

\begin{proof}
By Lemma 3.8.1, the finite Poisson reflection group $G = \langle \phi: \phi\big\vert_{P_1} = \begin{brsm}-1 & 0 & 0\\0 & 1 & 0\\2 & 0 & 1\end{brsm} \rangle \cong \mathbb{Z}_2$. By the Molien's Theorem, 
\[
h_{P^G}(t) = \frac{1}{2}\left(\frac{1}{(1-t)^3} + \frac{1}{(1-t)^2(1+t)}\right) = \frac{1}{(1-t)^2(1-t^2)}.
\]

It's not difficult to find three elements $y_1 = x_2$, $y_2 = x_1+x_3$, $y_3 = x_1^2$ that are algebraically independent and are invariant under the action of $G$. Consider the natural inclusion $\Bbbk[y_1, y_2, y_3] \hookrightarrow P^G$. Extend the natural inclusion into a short exact sequence and apply Hilbert series. Given that the Hilbert series of a short exact sequence sums up to 0, we deduce that the invariant subalgebra $P^G = \Bbbk[y_1, y_2, y_3]$. The Poisson structure on the invariant subalgebra is:
\[
\{y_1, y_2\} = y_1y_2 + 3y_3, \quad
\{y_2, y_3\} = 2y_2y_3, \quad
\{y_3, y_1\} = 2y_1y_3.
\]
To prove that $P^G$ is not isomorphic to $P$ as Poisson algebras, one can either invoke Lemma 2.1 or Lemma 2.2. Both approaches are equally straightforward.
\end{proof}

\smallskip
\subsection{\texorpdfstring{$\Omega_9 = x_1^2x_3 + x_1^2x_2, \quad \{x_1, x_2\} = x_1^2, \quad \{x_2, x_3\} = 2x_1x_3+x_2^2, \quad \{x_3, x_1\} = 2x_1x_2$}{Case 9}}

\begin{mylem}
\hspace{.1cm}
\begin{center}
\renewcommand{\arraystretch}{1.25}
\begin{tabular}{ |p{8cm}|p{8cm}|} 
\hline
\centering ${\textbf{PAut}_{{\textbf{gr}}}\boldsymbol{(P)}}$ &
\centering $\textbf{PR(}\boldsymbol{P}\textbf{)}$ \tabularnewline
\hline
\hline
\centering $\left\lbrace
\begin{bmatrix}a & 0 & 0\\b & a & 0\\-\frac{b^2}{a} & -2b & a\end{bmatrix}: a \neq 0\right\rbrace$ &
\centering $\emptyset$
\tabularnewline
\hline
\end{tabular}
\end{center}
\end{mylem}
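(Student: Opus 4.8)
The plan is to reproduce the two-part template used in the preceding eight cases: first determine $\text{PAut}_{\text{gr}}(P)$ by imposing the defining identity (2.1) on a generic matrix $\phi = [a_{ij}]$, and then read off $\text{PR}(P)$ from the eigenvalue structure of the resulting matrices. I would begin by writing $\phi(x_i) = \sum_k a_{ik}x_k$ and applying (2.1) to the three pairs $(1,2)$, $(1,3)$, $(2,3)$. For each pair, the left-hand side $\phi(\{x_i,x_j\})$ is computed by substituting the linear forms into the given quadratic brackets and expanding, while the right-hand side is the combination $\sum_{k<l}(a_{ik}a_{jl}-a_{il}a_{jk})\{x_k,x_l\}$ with the three structure brackets $x_1^2$, $-2x_1x_2$, and $2x_1x_3+x_2^2$ inserted (recall $\{x_1,x_3\} = -2x_1x_2$). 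Comparing the coefficients of the six monomials $x_1^2,\,x_1x_2,\,x_1x_3,\,x_2^2,\,x_2x_3,\,x_3^2$ in each of the three identities yields the governing polynomial system.

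I expect the $x_3^2$- and $x_2x_3$-coefficients to force $a_{13}=a_{23}=0$ at once, after which invertibility gives $a_{33}\neq 0$. The diagonal-monomial relations should then collapse the surviving entries to $a_{11}=a_{22}=a_{33}=:a$ together with $a_{12}=0$, while the remaining cross-term equations pin down $a_{21}=:b$, $a_{32}=-2b$, and finally $a_{31}=-b^2/a$. This produces exactly the claimed lower-triangular family $\begin{bmatrix}a & 0 & 0\\ b & a & 0\\ -b^2/a & -2b & a\end{bmatrix}$ with $a\neq 0$, and no additional components.

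The main obstacle is purely the bookkeeping involved in solving this nonlinear system: several of the equations are quadratic in the $a_{ij}$ and must be combined in the correct order, so the delicate point is sequencing the eliminations—using an $x_3^2$-type equation to kill $a_{13}$ and $a_{23}$ before simplifying the cross-terms—so that no spurious branches survive and the constraint $a_{31}=-b^2/a$ emerges cleanly rather than as a separate case. Once the form is established, the Poisson-reflection count is immediate: every matrix in the family is lower triangular with the single diagonal value $a$, hence has $a$ as a triple eigenvalue. Since a Poisson reflection requires $\phi\big\vert_{P_1}$ to have eigenvalues $1,1,\xi$ for some primitive $m$th root of unity $\xi$ with $m>1$, and all three eigenvalues here coincide, no such $\phi$ can exist; therefore $\text{PR}(P)=\emptyset$.
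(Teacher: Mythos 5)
Your proposal is correct and follows essentially the same route as the paper: apply identity (2.1) to each pair of generators, use the absence of $x_3^2$ in the structure brackets to kill $a_{13}$ and $a_{23}$ (and then $a_{12}$), collapse the diagonal to a single value $a$, and solve the remaining cross-term equations for $a_{32}=-2a_{21}$ and $a_{31}=-a_{21}^2/a$. The paper likewise concludes $\text{PR}(P)=\emptyset$ by observing that every automorphism in the family has a triple eigenvalue $a$, so no further comment is needed.
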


\begin{proof}
Let $\phi \in \text{PAut}_{\text{gr}}(P)$. Apply (2.1), we have the following system of equations, with redundant equations omitted:
\begin{enumerate}[label = (\arabic*)]
    \item $a_{11}^2 = a_{11}a_{22}-a_{12}a_{21}$.
    
    \item $a_{12}^2 = a_{12}a_{23}-a_{13}a_{22}$.

    \item $a_{13}^2 = 0$.
    
    \item $a_{21}^2+2a_{11}a_{31} = a_{21}a_{32}-a_{22}a_{31}$. 
    
    \item $a_{22}^2+2a_{12}a_{32} = a_{22}a_{33}-a_{23}a_{32}$.

    \item $a_{23}^2+2a_{13}a_{33} = 0$. 
    
    \item $2a_{11}a_{21} = a_{12}a_{31}-a_{11}a_{32}.$
\end{enumerate}

The initial step is straightforward. Equations (3), (6), (2) imply $a_{12} = a_{13} = a_{23} = 0$ and $a_{11}, a_{33} \neq 0$. Simplify the remaining equations. From (1) and (5), we conclude that $a_{11} = a_{22} = a_{33}$. From (7), we deduce that $a_{32} = -2a_{21}$. Finally, by examining (4), we ascertain that $a_{31} = -\frac{a_{21}^2}{a_{11}}$. In conclusion,
\[
\text{PAut}_{\text{gr}}(P) = \left\lbrace
\begin{bmatrix}a & 0 & 0\\b & a & 0\\-\frac{b^2}{a} & -2b & a\end{bmatrix}: a \neq 0\right\rbrace.
\]

It is clear that there are no Poisson reflections for this Poisson structure, as any graded Poisson automorphism has three repeated eigenvalues.
\end{proof}

\smallskip

We are now prepared to prove Theorem 3.1.

\smallskip

\noindent\textit{Proof of Theorem 3.1.} It suffices to prove $\Rightarrow$. The classical Shephard-Todd-Chevalley Theorem states that if $P^G \cong P$ as algebras, then $G$ is generated by reflections. Since $G \subseteq \text{PAut}_{\text{gr}}(P)$, the group $G$ is generated by Poisson reflections. By Lemma 3.$i$.1 ($1 \leq i \leq 9$), \textit{Unimodular 5}, \textit{Unimodular 6}, \textit{Unimodular 7}, \textit{Unimodular 9} have no Poisson reflections, hence the statement is trivially true. By Lemma 3.$i$.2 ($1 \leq i \leq 9$), the group $G$ is necessarily trivial for \textit{Unimodular 1}, \textit{Unimodular 2}, \textit{Unimodular 3}, \textit{Unimodular 4}, \textit{Unimodular 8}. \qed

\

\section{A Variant of The Watanabe Theorem}

In this section, we prove a variant of the Shephard-Todd-Chevalley Theorem and a variant of the Watanabe Theorem for Poisson enveloping algebras of quadratic Poisson structures on $\Bbbk[x_1, \cdots, x_n]$, under the actions induced by $\text{PAut}_{\text{gr}}(P) \to \text{Aut}_{\text{gr}}(U(P))$. 

\smallskip

Let $P = \Bbbk[x_1,\cdots,x_n]$ be a quadratic Poisson algebra and let $U(P)$ be its Poisson enveloping algebra. Let $\phi \in \text{PAut}_{\text{gr}}(P)$. Recall that in section 2, we define $\widetilde{\phi}$ to be the following graded algebra automorphism of $U(P)$:
\begin{align*}
    \widetilde{\phi}(x_i) = \phi(x_i), \quad \widetilde{\phi}(y_i) = \sum_{j=1}^{n}\frac{\partial g(x_i)}{\partial x_j}y_j,
\end{align*}
for all $1 \leq i \leq n$. Lemma 2.3 and Lemma 2.4 allow us to formulate the following questions, as generalizations of the Shephard-Todd-Chevalley Theorem and the Watanabe Theorem:
\begin{enumerate}[label = (\arabic*)]
    \item Under what conditions on $G$ is $U(P)^{\widetilde{G}}$ Artin-Schelter regular?
    
    \item Under what conditions on $G$ is $U(P)^{\widetilde{G}}$ Artin-Schelter Gorenstein?
\end{enumerate}

The answer to Question (1) entails establishing the absence of quasi-reflections in $\widetilde{G}$ through the enumeration of the eigenvalues of its elements, given that the set of eigenvalues of a quasi-reflection of $U(P)$ exhibits a highly constrained form as elucidated in \cite[Theorem 3.1]{KKZ}. 

\begin{thm}
Let $P = \Bbbk[x_1, \cdots, x_n]$ be a Poisson algebra and $U(P)$ be its Poisson enveloping algebra. Let $G$ be a finite nontrivial subgroup of the graded Poisson automorphism group of $P$ and let $\widetilde{G}$ be the corresponding finite nontrivial subgroup of the graded automorphism group of $U(P)$. The invariant subalgebra $U(P)^{\widetilde{G}}$ is Artin-Schelter regular if and only if $G$ is trivial.
\end{thm}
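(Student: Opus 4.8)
The plan is to reduce the statement to a parity obstruction on eigenvalues and then invoke the rigidity (Shephard-Todd-Chevalley) direction for Artin-Schelter regular algebras. Since the quadratic hypothesis on $P$ guarantees that $U(P)$ is a Noetherian Artin-Schelter regular quantum polynomial ring generated in degree $1$ with $\dim_{\Bbbk} U(P)_1 = 2n$, the noncommutative analogue of the Shephard-Todd-Chevalley theorem (in the framework of \cite{KKZ}) applies: $U(P)^{\widetilde{G}}$ is Artin-Schelter regular if and only if the finite group $\widetilde{G}$ is generated by quasi-reflections of $U(P)$. Because a group generated by quasi-reflections that contains no quasi-reflection must be trivial, it suffices to prove that no element of $\widetilde{G}$ is a quasi-reflection; combined with the isomorphism $G \cong \widetilde{G}$ of Lemma 2.4, this forces $G$ to be trivial and settles the only nontrivial implication, the converse being immediate since a trivial group gives $U(P)^{\widetilde{G}} = U(P)$, which is Artin-Schelter regular by \cite[Corollary 1.5]{LWZ2}.

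The heart of the argument is Lemma 2.5. I would fix a nontrivial $\phi \in G$ and let $\lambda_1, \dots, \lambda_m$ be the eigenvalues of $\phi\big\vert_{P_1}$ with multiplicities $c_1, \dots, c_m$. Lemma 2.5 shows that $\widetilde{\phi}\big\vert_{U(P)_1}$ has exactly the same eigenvalues $\lambda_1, \dots, \lambda_m$ but with doubled multiplicities $2c_1, \dots, 2c_m$, so every eigenvalue of $\widetilde{\phi}$ occurs with even multiplicity. I would then compare this with the classification of quasi-reflections of a quantum polynomial ring in \cite[Theorem 3.1]{KKZ}: a quasi-reflection $\widetilde{\phi}$ of $U(P)$ must have eigenvalues $\underbrace{1, \dots, 1}_{2n-1}, \xi$ for a nontrivial root of unity $\xi$, or else $\widetilde{\phi}$ has order $4$ with eigenvalues $\underbrace{1, \dots, 1}_{2n-2}, i, -i$. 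In the first form the eigenvalue $\xi \neq 1$ has odd multiplicity $1$, and in the second form each of $i$ and $-i$ has odd multiplicity $1$; in either case some eigenvalue occurs an odd number of times, contradicting the even-multiplicity conclusion of Lemma 2.5.

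Therefore no element of $\widetilde{G}$ is a quasi-reflection (the identity being excluded since its trace series is the Hilbert series rather than the required form), so $\widetilde{G}$ is generated by quasi-reflections only when $\widetilde{G}$ is trivial; by Lemma 2.4 this means $G$ is trivial, completing the proof. I expect the main obstacle to be conceptual rather than computational: the substantive work is already packaged in Lemma 2.5, and the only thing to verify carefully is that the even-multiplicity property is robust across the entire eigenvalue multiset, in particular for $\lambda = 1$ and for the complex eigenvalues $\pm i$ appearing in the order-$4$ case of \cite[Theorem 3.1]{KKZ}. One should also confirm that the quadratic hypothesis is genuinely used, since it is precisely what places $U(P)$ in the class of quantum polynomial rings to which both the quasi-reflection classification and the rigidity direction of the noncommutative Shephard-Todd-Chevalley theorem apply.
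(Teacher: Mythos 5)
Your argument is correct and follows the paper's own proof essentially verbatim: both compare the even eigenvalue multiplicities of $\widetilde{\phi}\big\vert_{U(P)_1}$ supplied by Lemma 2.5 against the odd-multiplicity eigenvalues ($\xi$, or $i$ and $-i$) forced on any quasi-reflection of a quantum polynomial ring by \cite[Theorem 3.1]{KKZ}, and conclude that $\widetilde{G}$ contains no quasi-reflections. The one caveat is that you invoke a full ``if and only if'' noncommutative Shephard--Todd--Chevalley theorem, which is not established in this generality; the paper instead cites \cite[Lemma 6.1]{KKZ} (a nontrivial group with no quasi-reflections yields an invariant subalgebra of infinite global dimension), which is precisely the one direction your argument actually uses, so the gap is one of citation rather than substance.
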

\begin{proof}
It suffices to assume the graded Poisson automorphism group $G$ is nontrivial. The Poisson enveloping algebra $U(P)$, according to \cite[Lemma 5.4]{GVW}, is a quantum polynomial ring. Therefore its quasi-reflections, as discussed in \cite[Theorem 3.1]{KKZ}, is either a classical reflection or a mystic reflection:
 \begin{itemize}
    \item The eigenvalues of $\widetilde{\phi}$ are $\underbrace{1,\cdots,1}_{2n-1}, \xi$, for some primitive root of unity $\xi$.
    
    \item The order of $\widetilde{\phi}$ is 4 and the eigenvalues of $\widetilde{\phi}$ are $\underbrace{1,\cdots,1}_{2n-2}$, $i$, $-i$. 
\end{itemize}
Comparing to the eigenvalues enumerated in Lemma 2.5, the induced graded automorphism group $\widetilde{G}$ contains no quasi-reflections, which, as indicated in \cite[Lemma 6.1]{KKZ}, results in the invariant subalgebra $U(P)^{\widetilde{G}}$ having infinite global dimension. As a consequence, the invariant subalgebra $U(P)^{\widetilde{G}}$ is not Artin-Schelter regular. 
\end{proof}

\smallskip

In contrast to Theorem 4.1, $U(P)^{\widetilde{G}}$ may be Artin–Schelter Gorenstein in certain instances. In practice, it is challenging to verify this through homological approaches because, in general, there is no systematic way to describe the relations of $U(P)^{\widetilde{G}}$. Instead, we shift our attention to \cite[Theorem 3.3]{JZ}:
\begin{thm}
\cite[Theorem 3.3]{JZ} Let $A$ be a Noetherian Artin-Schelter Gorenstein $\Bbbk$-algebra and let $H$ be a finite subgroup of the graded automorphism group of $A$. If $\text{hdet} h = 1$ for all $h \in H$, then $A^H$ is Artin-Schelter Gorenstein.
\end{thm}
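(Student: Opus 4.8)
The plan is to establish the Artin--Schelter Gorenstein property of $A^H$ through graded local cohomology and local duality, exploiting the fact that the hypothesis $\text{hdet}\,h = 1$ for all $h \in H$ forces the $H$-action on the top local cohomology of $A$ to coincide with the naive contragredient action. First I would record the structural preliminaries. Since $\Bbbk$ has characteristic $0$ and $H$ is finite, the order $|H|$ is invertible, so the Reynolds operator $e = \frac{1}{|H|}\sum_{h \in H} h$ projects $A$ onto $A^H$; in particular the invariant functor $(-)^H$ is exact and $A^H$ is a direct summand of $A$ as an $A^H$-bimodule. Standard results on finite group actions on Noetherian connected graded algebras then give that $A^H$ is again Noetherian connected graded and that $A$ is module-finite over $A^H$. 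I would also note that $\text{hdet}\colon H \to \Bbbk^{\times}$ is a group homomorphism, so the hypothesis says precisely that $H$ lies in its kernel.

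Next I would pass to local cohomology with respect to the graded augmentation ideals $\mathfrak{m} \subseteq A$ and $\mathfrak{n} = \mathfrak{m} \cap A^H \subseteq A^H$. Because $A$ is module-finite over $A^H$, the $\mathfrak{m}$-torsion and $\mathfrak{n}$-torsion agree after restriction of scalars, and because $(-)^H$ is exact it commutes with the derived sections functor; this yields natural isomorphisms $H^i_{\mathfrak{n}}(A^H) \cong H^i_{\mathfrak{m}}(A)^H$ for all $i$. Since $A$ is Artin--Schelter Gorenstein of injective dimension $d$, its local cohomology is concentrated in degree $d$, with $H^i_{\mathfrak{m}}(A) = 0$ for $i \neq d$ and $H^d_{\mathfrak{m}}(A) \cong {}^{\sigma}A'(\ell)$ for the Nakayama automorphism $\sigma$ and some internal shift $\ell$. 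Consequently $H^i_{\mathfrak{n}}(A^H) = 0$ for $i \neq d$, which supplies the vanishing half of the Gorenstein condition for $A^H$.

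The decisive step is the analysis of $H^d_{\mathfrak{n}}(A^H) \cong H^d_{\mathfrak{m}}(A)^H$. By the very definition of the homological determinant recalled earlier in the paper, each $h \in H$ acts on $H^d_{\mathfrak{m}}(A)$ as $(\text{hdet}\,h)^{-1}$ times the Matlis dual $(h^{-1})'$ of its contragredient action; under the hypothesis $\text{hdet}\,h = 1$ this scalar is trivial, so $H$ acts on $H^d_{\mathfrak{m}}(A) \cong A'(\ell)$ exactly by the contragredient action $h \cdot f = f \circ h^{-1}$. Taking invariants and using that in characteristic $0$ invariants and coinvariants coincide degreewise, duality gives $(A')^H \cong (A^H)'$; I would then upgrade this to an isomorphism of graded $A^H$-modules $H^d_{\mathfrak{n}}(A^H) \cong (A^H)'(\ell)$. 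Feeding the concentration in a single degree together with this identification into graded local duality produces $\text{Ext}^i_{A^H}(\Bbbk, A^H) \cong \delta_{i,d}\,\Bbbk(\ell')$ for an appropriate shift $\ell'$, which is precisely the Artin--Schelter Gorenstein condition.

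The hard part will be the final identification in the decisive step: matching not merely the graded vector-space structure but the full $A^H$-module structure on the invariant top local cohomology with the Matlis dual of $A^H$, and tracking the Nakayama twist $\sigma$ so that it restricts harmlessly on the invariant ring. This is exactly where the triviality of every $\text{hdet}\,h$ must be leveraged: it guarantees that the cyclic socle generator of $H^d_{\mathfrak{m}}(A)$ is $H$-fixed, so that applying $(-)^H$ does not destroy the cyclic generator of the dualizing module, and the one-dimensionality of $\text{Ext}^d_{A^H}(\Bbbk, A^H)$ is preserved. I would expect the verification that $(-)^H$ commutes with local cohomology in the graded noncommutative setting, and the compatibility of the contragredient $H$-action with the $A^H$-module structure, to require the most care.
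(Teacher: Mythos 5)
The paper gives no proof of this statement---it is quoted verbatim from \cite[Theorem 3.3]{JZ}---and your sketch follows essentially the same route as the original J{\o}rgensen--Zhang argument: Montgomery's theorem makes $A$ module-finite over $A^H$ (so $A^H$ is Noetherian and the $\mathfrak{m}$- and $\mathfrak{n}$-torsion functors agree), the exact Reynolds projection lets $(-)^H$ pass through graded local cohomology to give $H^i_{\mathfrak{n}}(A^H) \cong H^i_{\mathfrak{m}}(A)^H$, and triviality of $\text{hdet}$ makes the $H$-action on $H^d_{\mathfrak{m}}(A)$ purely contragredient, so the dualizing module descends and local duality yields the Artin--Schelter Gorenstein condition for $A^H$. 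The one technical point you flag, the Nakayama twist, resolves exactly as you hope: in a connected graded algebra the units are central scalars, so graded inner automorphisms are trivial, hence the Nakayama automorphism $\sigma$ is unique, commutes with every graded automorphism, and in particular restricts to $A^H$.
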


To apply \cite[Theorem 3.3]{JZ}, we need a formula of computing the homological determinant of each induced graded automorphism $\widetilde{\phi}$ of $U(P)$, which will be the primary goal of the following lemma. 

\begin{lem}
Let $P = \Bbbk[x_1, \cdots, x_n]$ be a quadratic Poisson algebra and let $\phi$ be a finite-order graded Poisson automorphism of $P$. Suppose that $\phi\big\vert_{P_1}$ has eigenvalues $\lambda_1, \cdots, \lambda_m$, with multiplicity $c_1, \cdots, c_m$, respectively. Then
\[
\text{Tr}_{U(P)}(\widetilde{\phi},t) = \frac{1}{(1-\lambda_1t)^{2c_1} \cdots (1-\lambda_mt)^{2c_m}}.
\]
\end{lem}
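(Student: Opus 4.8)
The plan is to compute the trace series of $\widetilde{\phi}$ directly from the graded pieces of $U(P)$, using the Poincaré–Birkhoff–Witt basis to organize the computation and Lemma 2.5 to pin down the eigenvalues of $\widetilde{\phi}$ on the degree-one part. The key observation is that the trace series is multiplicative over a tensor-like decomposition of the associated graded space, so it should factor as a product of geometric-type series, one for each generator, and hence reduce to identifying the eigenvalue attached to each generator.

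First I would invoke Lemma 2.5, which tells us that $\widetilde{\phi}\big\vert_{U(P)_1}$ is diagonalizable with eigenvalues $\lambda_1, \dots, \lambda_m$ of multiplicities $2c_1, \dots, 2c_m$. Concretely, one can choose a basis of $U(P)_1$ consisting of $2n$ simultaneous eigenvectors: the $c_i$ eigenvectors $v_{i,j}$ coming from $P_1$ and the $c_i$ eigenvectors $\sum_k \tfrac{\partial \phi(v_{i,j})}{\partial x_k} y_k$ coming from the $y$-generators, all with eigenvalue $\lambda_i$. Next I would use the PBW basis from \cite[Theorem 3.7]{OPS}, which exhibits $U(P)$ as having the same Hilbert series as a commutative polynomial ring in these $2n$ eigen-generators. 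Since $\widetilde{\phi}$ is a graded automorphism and these generators span $U(P)_1$, the action of $\widetilde{\phi}$ on the degree-$d$ part $U(P)_d$ is conjugate (via the PBW ordering) to its action on the degree-$d$ symmetric power of $U(P)_1$; the trace on a symmetric power of a diagonalizable operator is the complete homogeneous symmetric polynomial in its eigenvalues.

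From here the computation is the standard generating-function identity: summing the traces of all symmetric powers of a diagonalizable operator with eigenvalues $\mu_1, \dots, \mu_{2n}$ gives $\prod_{s=1}^{2n}(1-\mu_s t)^{-1}$. Grouping the $2n$ eigenvalues according to Lemma 2.5 collapses this to
\[
\text{Tr}_{U(P)}(\widetilde{\phi},t) = \frac{1}{(1-\lambda_1 t)^{2c_1} \cdots (1-\lambda_m t)^{2c_m}},
\]
which is exactly the claim. The finite-order hypothesis on $\phi$ guarantees that $\phi\big\vert_{P_1}$, and hence $\widetilde{\phi}\big\vert_{U(P)_1}$, is diagonalizable, so the symmetric-power trace formula applies cleanly.

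The main obstacle I anticipate is justifying that the trace of $\widetilde{\phi}$ on $U(P)_d$ genuinely equals the trace on the $d$-th symmetric power of $U(P)_1$, despite $U(P)$ being noncommutative. The PBW theorem supplies a vector-space basis indexed by ordered monomials, but I must argue that $\widetilde{\phi}$, expressed in this basis, has the same trace as the corresponding operator on the commutative symmetric algebra. The clean way to handle this is to pass to the associated graded algebra with respect to the PBW filtration (which is a commutative polynomial ring on the $2n$ generators, as the PBW basis shows): $\widetilde{\phi}$ respects this filtration, trace is a filtration invariant, and the induced operator on the associated graded is precisely the symmetric-power action with the eigenvalues from Lemma 2.5. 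Once this reduction is in place, the remainder is the routine symmetric-function identity.
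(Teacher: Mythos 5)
Your proof is correct, but it is packaged differently from the paper's. The paper does not pass through Lemma 2.5 or symmetric powers at all: it expands $\text{Tr}_{P}(\phi,t) = 1 + \sum_{i\geq 1} a_i t^i$ and shows directly that $\text{Tr}_{U(P)}(\widetilde{\phi},t) = \text{Tr}_{P}(\phi,t)^2$, by reading off the diagonal coefficient of each PBW monomial $x_1^{p_1}\cdots x_n^{p_n}y_1^{q_1}\cdots y_n^{q_n}$ under $\widetilde{\phi}$ as the product $b_{p_1,\dots,p_n}b_{q_1,\dots,q_n}$ of the corresponding diagonal coefficients of $\phi$ on $x^p$ and on $y^q$; the stated product formula then follows from the classical trace series of the commutative polynomial ring $P$. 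Your route instead feeds the eigenvalue count of Lemma 2.5 into the generating function $\prod_{s}(1-\mu_s t)^{-1}$ for traces of symmetric powers, after reducing to the associated graded algebra. Both arguments rest on the same underlying fact, namely that the commutator corrections produced by reordering a PBW monomial strictly lower the number of $y$-generators and therefore cannot contribute to a diagonal matrix entry; your associated-graded formulation makes this explicit where the paper leaves it implicit in its observations (1)--(3). One point of precision: the filtration you need is the one by the number of $y$-generators within each total-degree component (the total-degree filtration is useless here since $U(P)$ is already graded by it), and $\widetilde{\phi}$ preserves it because it maps the span of the $x_i$ and the span of the $y_i$ to themselves. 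With that specified, your reduction is clean and arguably more transparent than the paper's coefficient count, at the modest cost of invoking Lemma 2.5, which the paper's proof of this lemma does not need.
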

\begin{proof}
Let $1 + \displaystyle{\sum_{i \geq 1}a_it^i}$ be the Taylor expansion of $\text{Tr}_{P}(\phi,t)$, where $a_i \in \Bbbk$ for all $i \geq 1$. It is sufficient to prove that $(1 + \displaystyle{\sum_{i \geq 1}a_it^i})^2 = \text{Tr}_{U(P)}(\widetilde{\phi},t)$. Fix $d \geq 1$. By \cite[Theorem 3.7]{OPS}, the degree $d$ component of the Poisson enveloping algebra $U(P)_d$ admits a $\Bbbk$-linear basis $\{x_1^{p_1}\cdots x_n^{p_n}y_1^{q_1} \cdots y_n^{q_n}: \displaystyle{\sum_{j=1}^{n} (p_j+q_j)} = d\}$. Let $r_1, \cdots, r_n \geq 0$, and $b_{r_1,\cdots,r_n}$ be the coefficient of the term $x_1^{r_1} \cdots x_n^{r_n}$ in $\phi(x_1^{r_1} \cdots x_n^{r_n})$. Consider the coefficient of the term $x_1^{p_1} \cdots x_n^{p_n}y_1^{q_1} \cdots y_n^{q_n}$ in $\widetilde{\phi}(x_1^{p_1} \cdots x_n^{p_n}y_1^{q_1} \cdots y_n^{q_n})$. There are three observations:
\begin{enumerate}[label = (\arabic*)]
    \item Given that $[x_i, x_j] = 0$ in $U(P)$, the coefficient of the term $x_1^{p_1} \cdots x_n^{p_n}$ in $\widetilde{\phi}(x_1^{p_1} \cdots x_n^{p_n})$ is $b_{p_1, \cdots, p_n}$. 

    \item Given that $[y_i,y_j] = \displaystyle{\sum_{k=1}^{n}\frac{\partial \{x_i,x_j\}}{\partial x_k}y_k}$ and $[y_i,x_j] = \{x_i,x_j\}$ in $U(P)$, the coefficient of the term $y_1^{q_1} \cdots y_n^{q_n}$ in $\widetilde{\phi}(y_1^{q_1} \cdots y_n^{q_n})$ is $b_{q_1, \cdots, q_n}$. 

    \item Given that $\widetilde{\phi}(x_j) \subseteq \displaystyle{\bigoplus_{k=1}^{n}\Bbbk x_k}$, the coefficient of $x_1^{p_1} \cdots x_n^{p_n}y_1^{q_1} \cdots y_n^{q_n}$ in $\widetilde{\phi}(x_1^{p_1} \cdots x_n^{p_n}y_1^{q_1} \cdots y_n^{q_n})$ is $b_{p_1, \cdots, p_n}b_{q_1, \cdots, q_n}$.
\end{enumerate}

Let $1 + \displaystyle{\sum_{i \geq 1}c_it^i}$ be the Taylor expansion of $\text{Tr}_{U(P)}(\widetilde{\phi},t)$, where $c_i \in \Bbbk$ for all $i \geq 1$. In accordance with the definition of the trace series and the above argument, the coefficient relating to the dimension of the degree $d$ component $c_d$ equals to the summation of all $b_{p_1, \cdots, p_n}b_{q_1, \cdots, q_n}$ ranging over $p_1 + \cdots + p_n + q_1 + \cdots + q_n = d$. Equivalently, $c_d = \displaystyle{\sum_{i+j=d}a_ia_j}$, and therefore, $(1 + \displaystyle{\sum_{i \geq 1}a_it^i})^2 = 1 + \displaystyle{\sum_{i \geq 1}c_it^i}$. 

Finally, since $P$ is a commutative polynomial ring, $\displaystyle{\text{Tr}_{P}(g,t) = \frac{1}{(1-\lambda_1t)^{c_1} \cdots (1-\lambda_mt)^{c_m}}}$, and consequently, 
\[
\text{Tr}_{U(P)}(\widetilde{\phi},t) = \frac{1}{(1-\lambda_1t)^{2c_1} \cdots (1-\lambda_mt)^{2c_m}},
\]
as desired. 
\end{proof}
With the assistance of Lemma 4.3, we are prepared to state a simple formula for the computation of the homological determinant of $\widetilde{\phi}$.

\begin{thm}
Let $P = \Bbbk[x_1, \cdots, x_n]$ be a quadratic Poisson algebra and let $U(P)$ be its Poisson enveloping algebra. Let $\phi$ be a finite-order graded Poisson automorphism of $P$ and let $\widetilde{\phi}$ be the induced graded automorphism of $U(P)$. Then
\[
\text{hdet} \widetilde{\phi} = (\det \phi\big\vert_{P_1})^2.
\]
\end{thm}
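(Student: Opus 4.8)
The plan is to connect the homological determinant of $\widetilde{\phi}$ to its trace series via the machinery of \cite[Lemma 2.6]{JZ}, which for a Noetherian Artin-Schelter Gorenstein algebra $A$ relates the behavior of $\mathrm{Tr}_A(\widetilde{\phi},t)$ near the pole $t=1$ to $\mathrm{hdet}\,\widetilde{\phi}$. Concretely, that lemma states that if one expands the trace series as a Laurent series in $(1-t)$ (equivalently around $t=1$), then $\mathrm{hdet}\,\widetilde{\phi}$ is recovered as the reciprocal of the value obtained by substituting $t \mapsto t^{-1}$ and reading off the leading coefficient; the upshot is a clean formula expressing $\mathrm{hdet}\,\widetilde{\phi}$ in terms of the poles of the trace series. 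Since $U(P)$ is Artin-Schelter regular of global dimension $2n$ with Hilbert series $1/(1-t)^{2n}$, it is Noetherian and Artin-Schelter Gorenstein, so the hypotheses of \cite[Lemma 2.6]{JZ} are satisfied and the homological determinant is well-defined.

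The key computational input is Lemma 4.3, which I would invoke to write
\[
\mathrm{Tr}_{U(P)}(\widetilde{\phi},t) = \frac{1}{(1-\lambda_1 t)^{2c_1}\cdots(1-\lambda_m t)^{2c_m}},
\]
where $\lambda_1,\dots,\lambda_m$ are the eigenvalues of $\phi\big\vert_{P_1}$ with multiplicities $c_1,\dots,c_m$. First I would apply the $t \mapsto t^{-1}$ substitution prescribed by \cite[Lemma 2.6]{JZ}: a factor $(1-\lambda_i t)^{2c_i}$ becomes $(1-\lambda_i t^{-1})^{2c_i} = (-\lambda_i)^{2c_i} t^{-2c_i}(1-\lambda_i^{-1}t)^{2c_i}$, so the scalar picked up from each eigenvalue block is $(-\lambda_i)^{2c_i} = \lambda_i^{2c_i}$. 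Multiplying across all blocks, the accumulated scalar is $\prod_{i=1}^m \lambda_i^{2c_i} = \left(\prod_{i=1}^m \lambda_i^{c_i}\right)^2 = (\det \phi\big\vert_{P_1})^2$, since $\det \phi\big\vert_{P_1}$ is precisely the product of the eigenvalues counted with multiplicity. Tracking the sign and reciprocal conventions in \cite[Lemma 2.6]{JZ} then yields $\mathrm{hdet}\,\widetilde{\phi} = (\det \phi\big\vert_{P_1})^2$.

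The main obstacle I anticipate is bookkeeping the exact normalization in \cite[Lemma 2.6]{JZ}: one must be careful whether the homological determinant is read off as the scalar itself or its reciprocal, and whether the substitution uses $t^{-1}$ directly or an auxiliary variable, so that the signs $(-\lambda_i)^{2c_i}$ collapse correctly. The evenness of the exponents $2c_i$ is what makes the sign ambiguity disappear entirely and forces a perfect square, so I would emphasize that the doubling of multiplicities established in Lemma 2.5 (and reflected in Lemma 4.3) is exactly the structural reason the answer is $(\det \phi\big\vert_{P_1})^2$ rather than $\det \phi\big\vert_{P_1}$. As a consistency check, I would verify the formula against the identity automorphism, where $\det = 1$ and the trace series reduces to the Hilbert series $1/(1-t)^{2n}$, giving $\mathrm{hdet} = 1$ as expected.
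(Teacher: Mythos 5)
Your proposal is correct and follows essentially the same route as the paper: combine Lemma 4.3 with \cite[Lemma 2.6]{JZ} and read off $\mathrm{hdet}\,\widetilde{\phi}$ as the reciprocal of the leading coefficient when $\mathrm{Tr}_{U(P)}(\widetilde{\phi},t)=\prod_i(1-\lambda_i t)^{-2c_i}$ is written as a Laurent series in $t^{-1}$, giving $\prod_i\lambda_i^{2c_i}=(\det\phi\big\vert_{P_1})^2$. One small note: the expansion in \cite[Lemma 2.6]{JZ} is at $t=\infty$ (a Laurent series in $t^{-1}$), not around the pole $t=1$ as your opening sentence suggests --- your actual computation does the right thing --- and the paper additionally cites \cite[Proposition 4.2]{JZ} for rationality of $\widetilde{\phi}$ before invoking that lemma.
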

\begin{proof}
By \cite[Proposition 4.2]{JZ}, the graded automorphism $\widetilde{\phi}$ is rational over $\Bbbk$. Apply \cite[Lemma 2.6]{JZ}, the trace series
\[
\text{Tr}_{U(P)}(\widetilde{\phi},t) = (\text{hdet}\widetilde{\phi})^{-1}t^{-2n} + \textit{lower terms},
\]
when written as a Laurent series in $t^{-1}$. By Lemma 4.6, 
\begin{align*}
\text{Tr}_{U(P)}(\widetilde{\phi},t) =& \frac{1}{(1-\lambda_1t)^{2c_1} \cdots (1-\lambda_mt)^{2c_m}}\\ 
\smallskip
=& \frac{1}{\left(\det(\phi\big\vert_{P_1})\right)^2t^{2n} + \textit{lower terms}}\\
\smallskip
=& \left(\text{det}(\phi\big\vert_{P_1})^2\right)^{-1}t^{-2n} + \textit{lower terms}.
\end{align*}

Comparing the leading coefficient, $\text{hdet} \widetilde{\phi} = (\det \phi\big\vert_{P_1})^2$. 
\end{proof}

Combining \cite[Theorem 3.3]{JZ} and Theorem 4.4, we are able to provide an answer to Question (2) for quadratic Poisson structures on $\Bbbk[x_1, \cdots, x_n]$:

\begin{cor}
Let $P = \Bbbk[x_1, \cdots, x_n]$ be a quadratic Poisson algebra and let $U(P)$ be its Poisson enveloping algebra. Let $G$ be a finite subgroup of the graded Poisson automorphism group of $P$ and let $\widetilde{G} = \{\widetilde{\phi}: \phi \in G\}$ be the corresponding finite subgroup of the graded automorphism group of $U(P)$. If $G$ is generated by graded Poisson automorphisms $\phi_1, \cdots, \phi_m$ such that $\text{det}(\phi_i\big\vert_{P_1}) = \pm 1$, then $U(P)^{\widetilde{G}}$ is Artin-Schelter Gorenstein.
\end{cor}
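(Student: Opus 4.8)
The plan is to apply the noncommutative Watanabe Theorem \cite[Theorem 3.3]{JZ} to the pair $(U(P), \widetilde{G})$, for which two ingredients are needed: that $U(P)$ is a Noetherian Artin-Schelter Gorenstein algebra, and that $\text{hdet}\,\sigma = 1$ for every $\sigma \in \widetilde{G}$. The first ingredient is already at hand: since $P$ is quadratic, $U(P)$ is Artin-Schelter regular \cite[Corollary 1.5]{LWZ2} (in particular Artin-Schelter Gorenstein) and Noetherian \cite[Proposition 9]{O}. Thus the entire burden of the proof is to verify the homological determinant condition on all of $\widetilde{G}$, after which the conclusion that $U(P)^{\widetilde{G}}$ is Artin-Schelter Gorenstein follows immediately.

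First I would reduce the homological determinant condition to a statement purely about $G$. By Lemma 2.4 the assignment $\phi \mapsto \widetilde{\phi}$ is a group isomorphism $G \xrightarrow{\sim} \widetilde{G}$, so every $\sigma \in \widetilde{G}$ has the form $\widetilde{\phi}$ for a unique $\phi \in G$; moreover $G$ is finite, so each such $\phi$ has finite order and Theorem 4.4 applies to give $\text{hdet}\,\sigma = \text{hdet}\,\widetilde{\phi} = (\det \phi\big\vert_{P_1})^2$. Consequently it suffices to show that $\det(\phi\big\vert_{P_1}) = \pm 1$ for \emph{every} $\phi \in G$, not merely for the generators $\phi_1, \dots, \phi_m$.

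The hard part, and really the only point requiring an idea beyond bookkeeping, is precisely this passage from the generators to the whole group, since the hypothesis constrains only the $\phi_i$. The key observation is that the map $D \colon G \to \Bbbk^{\times}$, $\phi \mapsto \det(\phi\big\vert_{P_1})$, is a group homomorphism, because $\phi \mapsto \phi\big\vert_{P_1}$ is a linear representation of $G$ on $P_1$ and the ordinary determinant is multiplicative. By hypothesis $D(\phi_i) \in \{\pm 1\}$ for each generator, and $\{\pm 1\}$ is a subgroup of $\Bbbk^{\times}$; hence the image $D(G) = \langle D(\phi_1), \dots, D(\phi_m)\rangle$ is contained in $\{\pm 1\}$. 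Therefore $\det(\phi\big\vert_{P_1}) = \pm 1$ for all $\phi \in G$.

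Combining these steps, for an arbitrary $\sigma = \widetilde{\phi} \in \widetilde{G}$ we obtain $\text{hdet}\,\sigma = (\det \phi\big\vert_{P_1})^2 = (\pm 1)^2 = 1$. Since $U(P)$ is Noetherian and Artin-Schelter Gorenstein and $\text{hdet}\,\sigma = 1$ for all $\sigma \in \widetilde{G}$, the noncommutative Watanabe Theorem \cite[Theorem 3.3]{JZ} yields that $U(P)^{\widetilde{G}}$ is Artin-Schelter Gorenstein, as desired.
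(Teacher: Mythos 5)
Your proof is correct and follows the same route as the paper, which simply cites \cite[Theorem 3.3]{JZ} together with the homological determinant formula of Theorem 4.4. You additionally spell out the passage from the generators to all of $G$ via the multiplicativity of $\phi \mapsto \det(\phi\big\vert_{P_1})$, a step the paper leaves implicit; this is a worthwhile clarification but not a different argument.
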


\begin{proof}
This is an application of \cite[Theorem 3.3]{JZ} when we substitute the value of the homological determinant as per the formula provided in Theorem 4.4.
\end{proof}

\smallskip

We conclude this section with the following example.

\begin{example}
Let $P = \Bbbk[x_1,x_2]$ be the Poisson algebra $\{f, g\} = \displaystyle{\left(\frac{\partial f}{\partial x_1}\frac{\partial g}{\partial x_2} - \frac{\partial g}{\partial x_1}\frac{\partial f}{\partial x_2}\right)}qx_1x_2$, for some $q \neq 0$, for all $f, g \in P$. Let $\xi_n$ be a primitive $n$th root of unity. Consider the graded Poisson automorphism group $G = \langle \phi = [x_1 \mapsto \xi_nx_1, x_2 \mapsto x_2] \rangle$ of $P$, and its induced graded automorphism group $\widetilde{G} = \langle \widetilde{\phi} = [x_1 \mapsto \xi_nx_1, x_2 \mapsto x_2, y_1 \mapsto \xi_ny_1, y_2 \mapsto y_2] \rangle$ of $U(P)$. The invariant subalgebra $U(P)^{\widetilde{G}}$ is isomorphic to the graded $\Bbbk$-algebra generated by $a_1, \cdots, a_{n+1}$, $b$, $c$ subjecting to the following relations:
\begin{itemize}
    \item $[a_i,a_j]$,
    
    \item $a_ia_j = a_ka_{i+j-k}$,
    
    \item $a_{i+1}b-\displaystyle{\sum_{j=0}^{i}\binom{i}{j}q^jba_{i+1-j}}$,

    \item $a_{i+1}c-\displaystyle{\sum_{j=0}^{i}\binom{i}{j}q^j(c+npb)a_{i+1-j}}$.
\end{itemize}

According to Theorem 4.1 and Corollary 4.5, the invariant subalgebra $U(P)^{\widetilde{G}}$ is not Artin-Schelter regular, and is not Artin-Schelter Gorenstein except when $n = 2$.
\end{example}

\

\section{Future Work}
In this section, we remark on some additional findings encountered during the proof of the main results presented in this paper and put forth several avenues for future research.

\begin{quest}
Let $P = \Bbbk[x_1, \cdots, x_n]$ be a quadratic Poisson algebra and let $G \subseteq \text{PAut}_{\text{gr}}(P)$ be a finite subgroup. In the case of all quadratic Poisson structures that have been examined, Theorem 3.1, \cite[Theorem 4.4, 4.11, 4.17, 4.19]{GVW}, $P^G \cong P$ as Poisson algebras if and only if $G$ is trivial. Does this statement hold universally for all quadratic Poisson structures? Stated differently, is this the Shephard-Todd-Chevalley Theorem for quadratic Poisson algebras? A good starting place is the 13 non-unimodular quadratic Poisson structures classified in [\cite{DH}, Theorem 2].
\end{quest}
\begin{quest}
Let $P = \Bbbk[x_1, \cdots, x_n]$ be a Poisson algebra with its Poisson structure derived from the semiclassical limit of an Artin-Schelter regular algebra $A$. It appears that the invariant subalgebras of $P$ and the invariant subslagebras of $A$ bear a striking resemblance. One example is \cite[Theorem 4.5]{KKZ2} and \cite[Theorem 3.8]{GVW}: these two papers state an identical Shephard-Todd-Chevalley theorem for the skew polynomial rings and the Poisson algebras arising from them. Another example is \cite[Proposition 5.8]{KKZ2} and \cite[4.6-4.11]{GVW}: these two papers capture some significant similarities between the quantum matrix algebras and the Poisson algebras arising from them. Naturally, we inquire if there exists a comprehensive theory linking the invariants of the Poisson algebras and their corresponding Artin-Schelter regular algebras.
\end{quest}

\

\bibliographystyle{alpha}
\bibliography{reference}

\begin{thebibliography}{MLTU09}

\bibitem[AP95]{AP}
J.~Alev and P.~Polo.
\newblock A rigidity theorem for finite group actions on enveloping algebras of semisimple {Lie} algebras.
\newblock {\em Advances in Mathematics}, 111:2:208--226, 1995.

\bibitem[AS87]{AS}
Michael Artin and William~F. Schelter.
\newblock Graded algebras of global dimension 3.
\newblock {\em Advances in Mathematics}, 66:171--216, 1987.

\bibitem[BW79]{BW}
J.~W. Bruce and C.~T.~C. Wall.
\newblock On the classification of cubic surfaces.
\newblock {\em Journal of the London Mathematical Society}, s2-19(2):245--256, 1979.

\bibitem[BZ18]{BZ}
Kenneth~A. Brown and James~J. Zhang.
\newblock Unimodular graded \text{P}oisson \text{H}opf algebras.
\newblock {\em Bulletin of the London Mathematical Society}, 50:5:887--898, 2018.

\bibitem[Che55]{C}
Claude Chevalley.
\newblock Invariants of finite groups generated by reflections.
\newblock {\em American Journal of Mathematics}, 77:4:778--782, 1955.

\bibitem[DH91]{DH}
J.~Dufour and A.~Haraki.
\newblock Rotationnels et structures de \text{P}oisson quadratiques.
\newblock {\em Comptes rendus de l'Académie des Sciences}, 312 I:137--140, 1991.

\bibitem[GVW23]{GVW}
Jason Gaddis, Padmini Veerapen, and Xingting Wang.
\newblock Reflection groups and rigidity of quadratic \text{P}oisson algebras.
\newblock {\em Algebras and Representation Theory}, 26:329--358, 2023.

\bibitem[GW20]{GW2}
Jason Gaddis and Xingting Wang.
\newblock The \text{Z}ariski cancellation problem for \text{P}oisson algebras.
\newblock {\em Journal of the London Mathematical Society}, 101:1250–1279, 2020.

\bibitem[JZ00]{JZ}
Peter Jørgensen and James Zhang.
\newblock Gourmet's guide to \text{G}orensteinness.
\newblock {\em Advances in Mathematics}, 151:313--345, 2000.

\bibitem[KKZ09]{KKZ}
E.~Kirkman, J.~Kuzmanovich, and J.~J. Zhang.
\newblock Rigidity of graded regular algebras.
\newblock {\em Transactions of the American Mathematical Society}, 360:12:6331–6369, 2009.

\bibitem[KKZ10]{KKZ2}
E.~Kirkman, J.~Kuzmanovich, and J.~Zhang.
\newblock Shephard–\text{T}odd–\text{C}hevalley theorem for skew polynomial rings.
\newblock {\em Algebras and Representation Theory}, 13:127--158, 2010.

\bibitem[KKZ15]{KKZ3}
E.~Kirkman, J.~Kuzmanovich, and J.~Zhang.
\newblock Invariant theory of finite group actions on down-up algebras.
\newblock {\em Transformation Groups}, 20:113–165, 2015.

\bibitem[LWW15]{LWW}
J.~Luo, Q.-S. Wu, and S.-Q. Wang.
\newblock Twisted \text{P}oincaré duality between \text{P}oisson homology and \text{P}oisson cohomology.
\newblock {\em Journal of Algebra}, 442:484--505, 2015.

\bibitem[LWZ20]{LWZ2}
Jiafeng Lü, Xingting Wang, and Guangbin Zhuang.
\newblock A note on the duality between \text{P}oisson homology and cohomology.
\newblock {\em Communications in Algebra}, 48:10:4170--4175, 2020.

\bibitem[MLTU09]{MLTU}
L.~Makar-Limanov, U.~Turusbekova, and U.~Umirbaev.
\newblock Automorphisms of elliptical \text{P}oisson algebras.
\newblock {\em Contemporary Mathematics}, 483:169--177, 2009.

\bibitem[Oh99]{O}
Sei-Qwon Oh.
\newblock Poisson enveloping algebras.
\newblock {\em Communications in Algebra}, 27:5:2181--2186, 1999.

\bibitem[OPS06]{OPS}
Sei-Qwon Oh, Chun-Gil Park, and Yong-Yeon Shin.
\newblock A \text{P}oincaré-\text{B}irkhoff-\text{W}itt theorem for \text{P}oisson enveloping algebras.
\newblock {\em Communications in Algebra}, 30:10, 2006.

\bibitem[ST54]{ST}
G.~C. Shephard and J.~A. Todd.
\newblock Finite unitary reflection groups.
\newblock {\em Canadian Journal of Mathematics}, 6:274--304, 1954.

\bibitem[TWZ22]{TWZ}
Xin Tang, Xingting Wang, and James~J. Zhang.
\newblock Twists of graded {Poisson} algebras and related properties.
\newblock {\em arXiv:2206.05639}, 2022.

\bibitem[Wat74]{W}
Keiichi Watanabe.
\newblock Certain invariant subrings are \text{G}orenstein \text{II}.
\newblock {\em Osaka Journal of Mathematics}, 11:379--388, 1974.

\end{thebibliography}

\end{document}